\setlist{noitemsep}
\newtheorem{thm}{Theorem}[section]
\newtheorem*{theorem*}{Theorem}
\newtheorem{prop}[thm]{Proposition}
\newtheorem{lemma}[thm]{Lemma}
\theoremstyle{definition}
\newtheorem{example}[thm]{Example}
\newtheorem{defn}[thm]{Definition}
\theoremstyle{remark}
\newtheorem{rem}[thm]{Remark}
\DeclareMathOperator{\G}{{\text{\rm G}}}
\DeclareMathOperator{\HH}{{\text{\rm H}}}
\DeclareMathOperator{\K}{{\text{\rm K}}}
\newcommand{\F}[1]{\mathbf{F}{#1}}
\newcommand{\M}[1]{\mathbf{M}{#1}}
\DeclareMathOperator{\h}{\mathbf{H}}
\DeclareMathOperator{\E}{\mathbf{E}}
\DeclareMathOperator{\LL}{\mathbf{L}}
\DeclareMathOperator{\p}{\mathbf{P}}
\DeclareMathOperator{\grp}{\mathbf{Grp}}
\DeclareMathOperator{\cset}{\mathbf{c}}
\DeclareMathOperator{\C}{\mathcal{C}}
\DeclareMathOperator{\Q}{\mathcal{Q}}
\DeclareMathOperator{\D}{\mathcal{D}}
\DeclareMathOperator{\cst}{\underline{\G}}
\newcommand{\ent}[1]{\mathbf{Ent}[#1]}
\newcommand{\fac}[1]{\mathbf{Fac}[#1]}
\newcommand{\loc}[1]{\mathbf{Loc}_\Sigma\ent{#1}}
\newcommand{\flo}[1]{\mathbf{Flo}_\Sigma[{#1}]}
\newcommand{\ccog}[2]{\underline{{#2}}_{{#1}}}
\newcommand{\set}[1]{\left\{{#1}\right\}}
\definecolor{emerald}{HTML}{093145}
\definecolor{flame}{HTML}{829356}
\definecolor{aubergine}{HTML}{107896}
\renewcommand{\mathcal}{\mathscr}
\newcommand{\inj}{\hookrightarrow}
\newcommand{\surj}{\twoheadrightarrow}
\newcommand{\two}{\Rightarrow}
\newcommand{\btrt}{\blacktriangleright}
\title{Morse Theory for Complexes of Groups}
\author{Naya Yerolemou and Vidit Nanda}
\date{}
\begin{document}
	

	\begin{abstract} We construct an equivariant version of discrete Morse theory for simplicial complexes endowed with group actions. The key ingredient is a 2-categorical criterion for making acyclic partial matchings on the quotient space compatible with an overlaid complex of groups. We use the discrete flow category of any such compatible matching to build the corresponding Morse complex of groups. Our main result establishes that the development of the Morse complex of groups recovers the original simplicial complex up to equivariant homotopy equivalence.
	\end{abstract}
	
	\maketitle

	\section{Introduction}\label{sec:intro}

	The interplay of ideas between group actions and Morse theory has resulted in several substantial advances across modern geometry and topology. Any highlight reel featuring equivariant Morse theory would certainly include Atiyah and Bott's seminal study of algebraic bundles on Riemann surfaces \cite{atiyahbott}, Hingston's work on counting closed geodesics in symmetric spaces \cite{hingston}, and Kirwan's investigations of symplectic quotients \cite{kirwan}, among other results. Such efforts are motivated (and hindered) by the fact that -- unless a group $\G$ acts freely on a topological space $X$ -- the quotient $Y := X/{\G}$ is usually far more pathological than $X$ itself. For instance, $Y$ may be singular even when $\G$ is a Lie group and $X$ a smooth manifold. Similarly, the quotients of algebraic varieties by algebraic group actions typically fail to be varieties in their own right. In such cases, it becomes necessary to augment $Y$ with additional stabiliser data to form a {\em quotient stack}. 
	
	Our goal in this paper is to produce an avatar of equivariant Morse theory specially adapted to quotient stacks arising from actions of finite groups $\G$ on finite simplicial complexes $X$. Under mild hypotheses on such actions (called {\em regularity}), one can ensure that the quotient $Y$ is also a simplicial complex and that the canonical projection $X \surj Y$ is a surjective simplicial map even when the $\G$-action is not free. The main instruments of this paper are a pair of 2-functors, one arising from the quotient stack itself and the other from a suitable discretisation of Morse theory. 
	
	\subsection*{Complexes of Groups} Quotient stacks for group actions on simplicial complexes are called complexes of groups, and their study dates back to the work of Corson \cite{corson} and Haefliger \cite{haefliger} in geometric group theory. A complex of groups associated to the action of $\G$ on $X$ amounts to a 2-functor of the form
	\[
	\F{}:\fac{Y} \to \grp.
	\]
	Here the domain is the poset of simplices (in the quotient complex $Y = X/{\G}$) ordered by the co-face relation, while the codomain $\grp$ is the 2-category whose objects are all groups, whose 1-morphisms are ordinary group homomorphisms, and the 2-morphism structure arises from conjugation \cite{carbone2020equivariant}. This is the first 2-functor of interest to us in this paper.
	
	When $\G$ is abelian, there is a considerable simplification since all 2-morphisms in the image of $\F{}$ become identities. For the purposes of these introductory remarks, we employ the following running example: consider the simplicial complex $X$ drawn below
	\begin{center}
	    \includegraphics[width=0.5\textwidth]{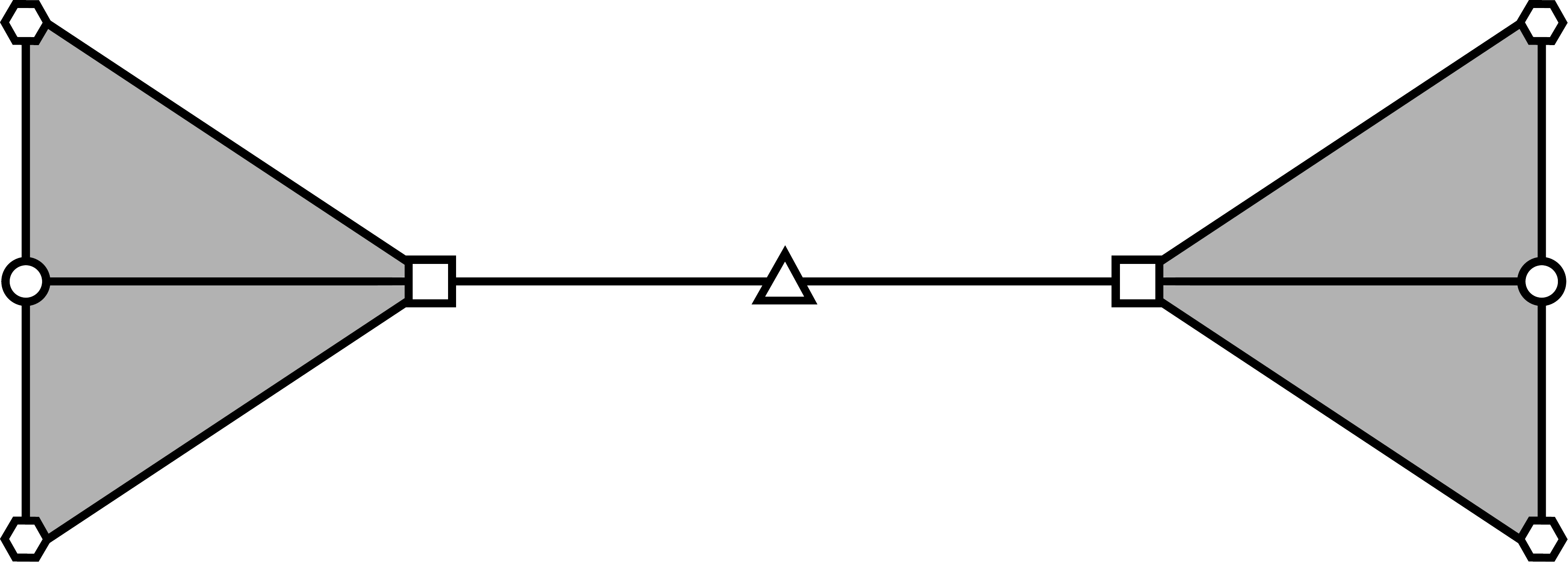}
	\end{center}
	The Klein four-group  $\G = \langle\sigma,\tau \mid \sigma^2=1=\tau^2\rangle$ acts on $X$ by letting $\sigma$ and $\tau$ induce reflections about the horizontal and vertical axis through the central vertex $\Delta$, and the resulting quotient $Y$ is:
	\begin{center}
	    \includegraphics[width=0.3\textwidth]{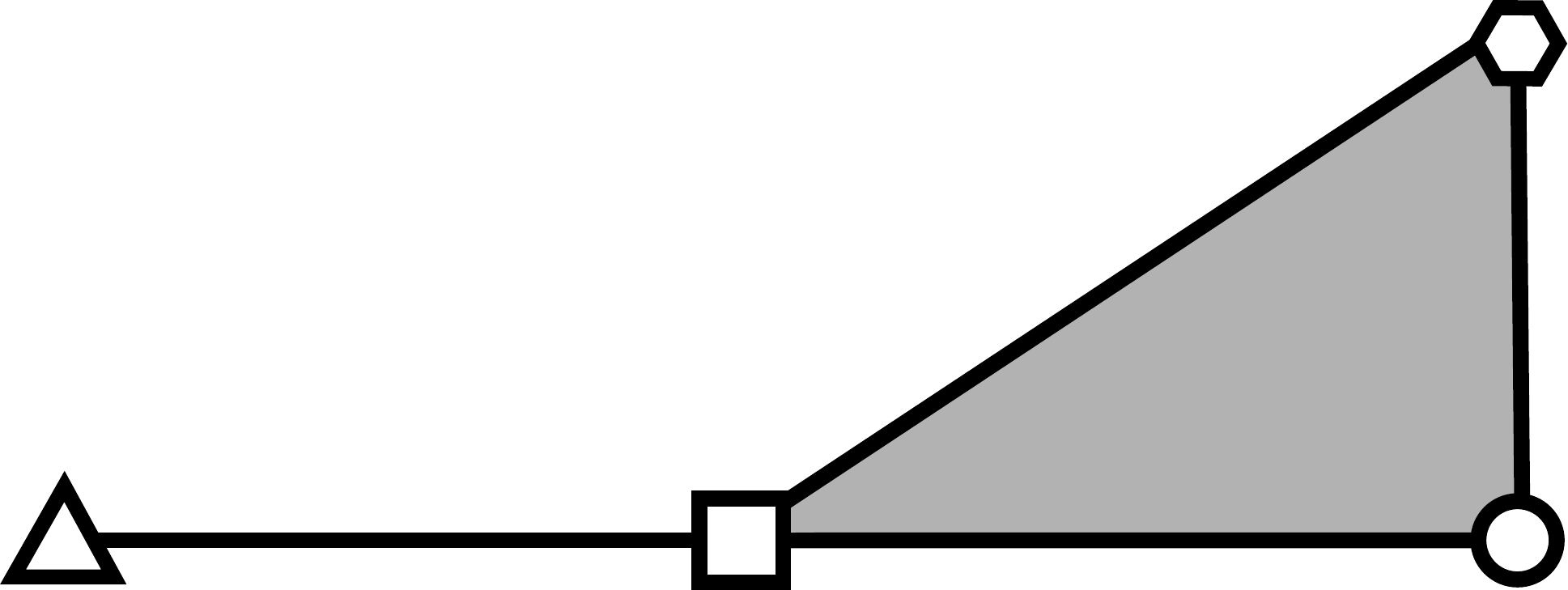}
	\end{center}
	
	A complex of groups $\F{}$ for this action is illustrated below (all of the 2-morphisms in sight are trivial):
    \begin{center}
        \fontsize{8pt}{8pt}
        \def\svgwidth{0.6\columnwidth}
        \scalebox{1}{
\begingroup%
  \makeatletter%
  \providecommand\color[2][]{%
    \errmessage{(Inkscape) Color is used for the text in Inkscape, but the package 'color.sty' is not loaded}%
    \renewcommand\color[2][]{}%
  }%
  \providecommand\transparent[1]{%
    \errmessage{(Inkscape) Transparency is used (non-zero) for the text in Inkscape, but the package 'transparent.sty' is not loaded}%
    \renewcommand\transparent[1]{}%
  }%
  \providecommand\rotatebox[2]{#2}%
  \newcommand*\fsize{\dimexpr\f@size pt\relax}%
  \newcommand*\lineheight[1]{\fontsize{\fsize}{#1\fsize}\selectfont}%
  \ifx\svgwidth\undefined%
    \setlength{\unitlength}{1169.68625732bp}%
    \ifx\svgscale\undefined%
      \relax%
    \else%
      \setlength{\unitlength}{\unitlength * \real{\svgscale}}%
    \fi%
  \else%
    \setlength{\unitlength}{\svgwidth}%
  \fi%
  \global\let\svgwidth\undefined%
  \global\let\svgscale\undefined%
  \makeatother%
  \begin{picture}(1,0.53070291)%
    \lineheight{1}%
    \setlength\tabcolsep{0pt}%
    \put(0,0){\includegraphics[width=\unitlength,page=1]{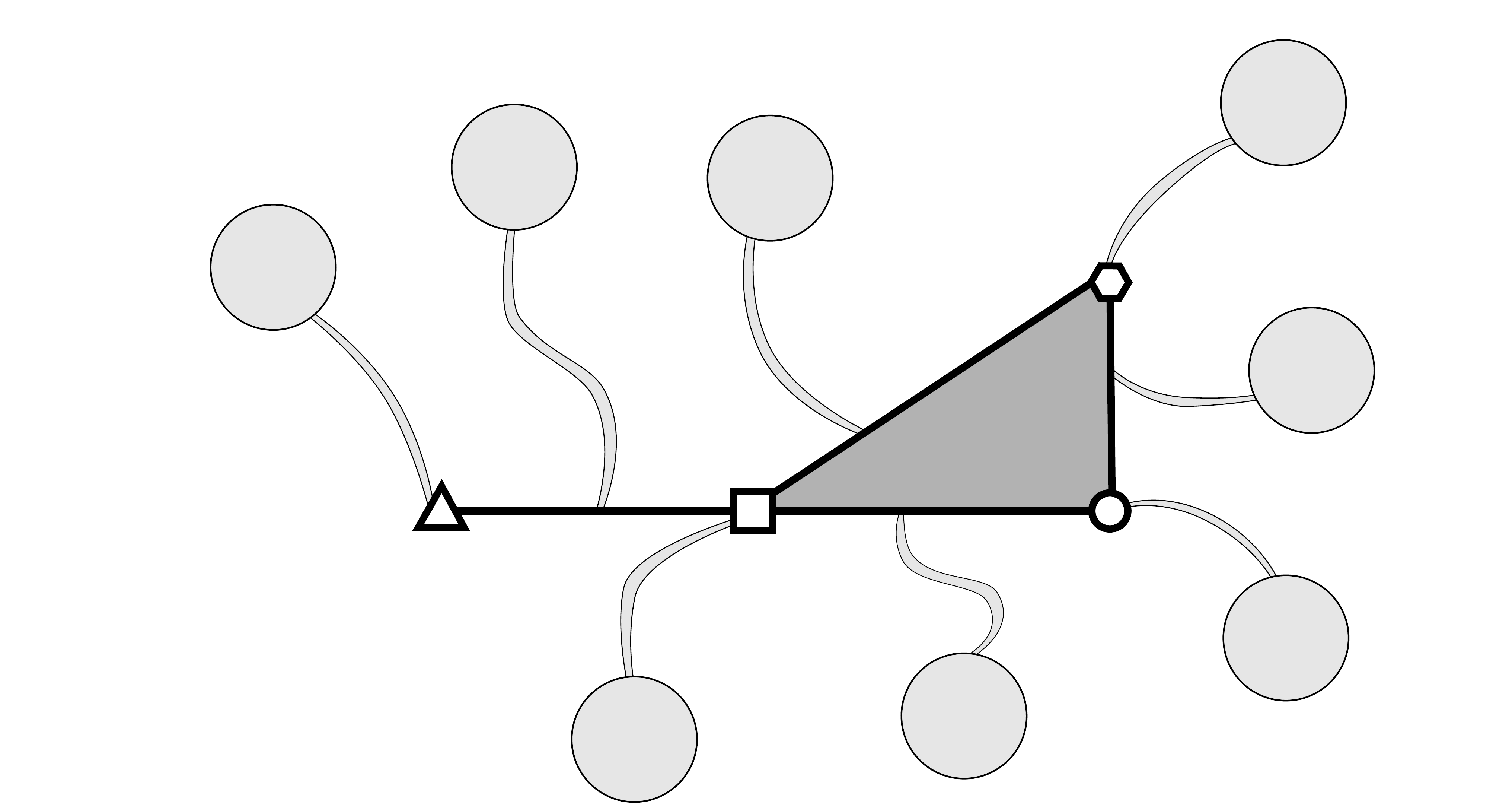}}%
    \put(0.18067438,0.34676905){\makebox(0,0)[t]{\lineheight{1.25}\smash{\begin{tabular}[t]{c}$\langle\sigma,\tau\rangle$\end{tabular}}}}%
    \put(0.34011997,0.41298914){\makebox(0,0)[t]{\lineheight{1.25}\smash{\begin{tabular}[t]{c}$\langle\sigma\rangle$\end{tabular}}}}%
    \put(0.41952597,0.03474567){\makebox(0,0)[t]{\lineheight{1.25}\smash{\begin{tabular}[t]{c}$\langle\sigma\rangle$\end{tabular}}}}%
    \put(0.63762465,0.05020136){\makebox(0,0)[t]{\lineheight{1.25}\smash{\begin{tabular}[t]{c}$\langle\sigma\rangle$\end{tabular}}}}%
    \put(0.85057138,0.10172071){\makebox(0,0)[t]{\lineheight{1.25}\smash{\begin{tabular}[t]{c}$\langle\sigma\rangle$\end{tabular}}}}%
    \put(0.86758275,0.27868613){\makebox(0,0)[t]{\lineheight{1.25}\smash{\begin{tabular}[t]{c}$\langle 1\rangle$\end{tabular}}}}%
    \put(0.84889315,0.45555999){\makebox(0,0)[t]{\lineheight{1.25}\smash{\begin{tabular}[t]{c}$\langle 1\rangle$\end{tabular}}}}%
    \put(0.50936487,0.40572108){\makebox(0,0)[t]{\lineheight{1.25}\smash{\begin{tabular}[t]{c}$\langle 1\rangle$\end{tabular}}}}%
    \put(0,0){\includegraphics[width=\unitlength,page=2]{intro_cplx.pdf}}%
    \put(0.65472863,0.48151787){\makebox(0,0)[t]{\lineheight{1.25}\smash{\begin{tabular}[t]{c}$\langle 1\rangle$\end{tabular}}}}%
  \end{picture}%
\endgroup%
}
    \end{center}
	Note that the group assigned to each simplex of $Y$ is a subgroup of the groups assigned to its faces. Similarly, the trivial $\G$-action gives rise to the {\em constant} complex of groups $\ccog{Y}{\G}$, where every simplex of $Y$ is assigned $\G$, all face relations are assigned identity group homomorphisms, and all 2-morphisms are trivial. Our complex of groups $\F{}$ admits a natural embedding (i.e., an injective natural transformation) $\Phi:\F{} \inj \ccog{Y}{\G}$ to the constant complex of groups. Associated to  this pair $(\F{},\Phi)$ is a category $\D(\F{},\Phi)$, which carries a natural $\G$-action and is called the {\em development} of $(\F{},\Phi)$. The following result, paraphrased from \cite{bridson2011metric}, is called {\bf the basic construction}.
	
	\begin{theorem*}[A] Let $(\F{},\Phi)$ be the complex of groups and injective natural transformation associated to the action of a group $\G$ on a simplicial complex $X$. Then the nerve of its development $\D(\F{},\Phi)$ is $\G$-equivariantly isomorphic to $X$.
	\end{theorem*}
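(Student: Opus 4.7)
My plan is to unpack the development $\D(\F{},\Phi)$ concretely in the spirit of Bridson--Haefliger and exhibit a $\G$-equivariant bijection onto the simplices of $X$ that preserves face relations. I would describe objects of $\D(\F{},\Phi)$ as pairs $(\sigma, g\Phi(\F{\sigma}))$ with $\sigma \in \fac{Y}$ and $g\Phi(\F{\sigma}) \in \G/\Phi(\F{\sigma})$; morphisms $(\sigma, g\Phi(\F{\sigma})) \to (\tau, g'\Phi(\F{\tau}))$ record a face relation $\tau < \sigma$ together with coset compatibility dictated by the image under $\Phi$ of the face homomorphism $\F{\sigma} \to \F{\tau}$. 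The $\G$-action is left translation on the coset factor.

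Fixing a lift $\tilde{\sigma} \in X$ for each $\sigma \in \fac{Y}$, regularity of the action guarantees that $\stab_\G(\tilde{\sigma}) = \Phi(\F{\sigma})$---this identification is precisely how $\F{}$ and $\Phi$ were manufactured from the action in the first place. The map $g\Phi(\F{\sigma}) \mapsto g \cdot \tilde{\sigma}$ is then a $\G$-equivariant bijection from $\G/\Phi(\F{\sigma})$ onto the orbit $\G \cdot \tilde{\sigma}$, which is exactly the set of simplices of $X$ projecting to $\sigma$ under $X \surj Y$. Assembling these bijections over $\sigma \in \fac{Y}$ yields a $\G$-equivariant bijection between the object set of $\D(\F{},\Phi)$ and the simplex set of $X$.

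To upgrade this to a simplicial isomorphism, I would check that it sends morphisms of $\D(\F{},\Phi)$ bijectively onto face-pairs in $X$. Each face relation $\tau < \sigma$ in $Y$ picks out a unique face of $\tilde{\sigma}$ lying over $\tau$, say $h_{\sigma,\tau}\cdot \tilde{\tau}$ for some $h_{\sigma,\tau}\in \G$, and then a routine unpacking of the Bridson--Haefliger compatibility condition identifies morphisms of $\D(\F{},\Phi)$ lying over $\sigma \to \tau$ with face-pairs $(g'\cdot\tilde{\tau},\, g\cdot\tilde{\sigma})$ in $X$. Surjectivity and injectivity of this passage both follow from the orbit decomposition on objects.

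The main obstacle I anticipate is coherence: showing that the bijection is well-defined independently of the chosen lifts $\tilde{\sigma}$ and extends consistently along chains $\rho < \tau < \sigma$ of faces. Replacing $\tilde{\sigma}$ by $a\cdot \tilde{\sigma}$ with $a \in \stab_\G(\tilde{\sigma})$ conjugates each $h_{\sigma,\tau}$ by stabiliser elements, and the resulting ambiguity in the face homomorphisms of $\F{}$ is exactly what the conjugation 2-cells in a complex of groups are designed to absorb. The injectivity of $\Phi$ plays a critical role here, since it lets one transfer conjugation by any element of $\G$ into an honest 2-cell inside the image of $\F{}$. Once this coherence is in place, $\G$-equivariance is automatic from the construction, and the induced map on nerves is a $\G$-equivariant simplicial isomorphism because it preserves dimensions and face inclusions.
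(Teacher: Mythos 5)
Your proposal is correct and follows essentially the same route as the paper's proof of Proposition \ref{prop:bascon}(3): the paper likewise defines the comparison functor by $(x,\cset_x(g))\mapsto g\lambda(x)$ on objects and $(f,\cset_x(g))\mapsto g\lambda(f)$ on morphisms, using the lifted face over each quotient face relation exactly as in your $h_{\sigma,\tau}\cdot\tilde{\tau}$ step. Your anticipated coherence obstacle largely dissolves in the paper's treatment because one fixes precisely the lift function and transfer elements that produced $(\F{},\Phi)$ in the first place, so that $\stab_{\G}(\tilde{\sigma})=\Phi_\sigma(\F{\sigma})$ holds on the nose and no independence-of-lifts argument is required for the statement as given.
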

	
	In fact, the basic construction is far more general than the above description suggests --- one can consider the action of $\G$ on an arbitrary loopfree category $\C$ rather than restricting focus to the poset of simplices in a given simplicial complex $X$. In this case, the associated development recovers $\C$ up to $\G$-equivariant isomorphism.
	
	\subsection*{Discrete Morse Theory} The second 2-functor of interest to us here arises from a  combinatorial adaptation of Morse theory due to Forman \cite{forman1998morse}. The central objects of study here are simplicial (or more generally, regular CW) complexes rather than smooth manifolds. The role of the gradient vector field is played by a partial bijection $\Sigma$ relating adjacent simplices of co-dimension one, subject to a global acyclicity condition. These pairings are typically illustrated as arrows from the lower-dimensional simplex to the higher-dimensional one. Here are two such {\em acyclic partial matchings} on our running example $X$ --- the one on the left is $\G$-equivariant whereas the one on the right is not.
	\begin{center}
	    \includegraphics[width=0.95\textwidth]{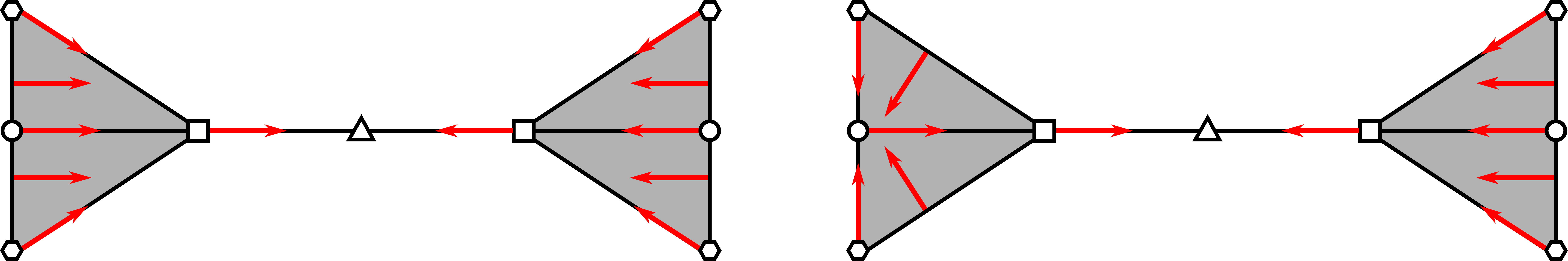}
	\end{center}
	
	The unpaired simplices are analogous to critical points from smooth Morse theory in the sense that the homotopy type of $X$ may be recovered from a CW complex whose $k$-cells correspond bijectively with $k$-dimensional unpaired simplices. In the two acyclic partial matchings on $X$ depicted above, only the central vertex $\Delta$ remains unpaired. Thus, both matchings establish that $X$ is contractible; but the one on the left goes a step further by showing that in fact $X$ is $\G$-equivariantly contractible. Indeed, one may view the equivariant matching as a lift of the following acyclic partial matching on the quotient $Y$:
	\begin{center}
	\includegraphics[width=0.25\textwidth]{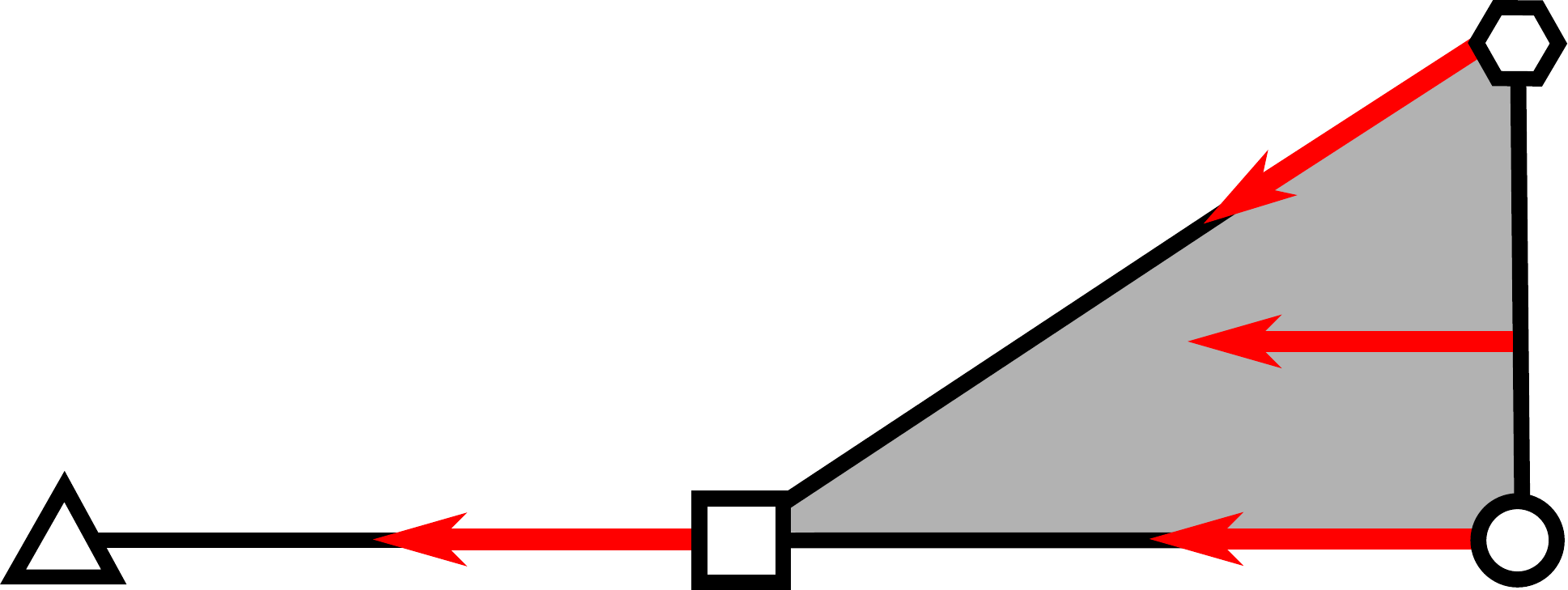}
	\end{center}
	
	Unfortunately, the CW complex induced by unpaired simplices is built by induction and its attaching maps are not straightforward to describe in general. To get a better handle on the homotopy type of this complex, we turn to the {\em entrance path category} $\ent{Y}$ of the quotient simplicial complex $Y$. This is the 2-category whose objects are the simplices of $Y$ and 1-morphisms $y \to y'$  consist of all strictly descending sequences of faces
	\[
	y_0 > y_1 > \cdots >y_{k-1} > y_k
	\]
	with $y=y_0$ and $y_k=y'$; the 2-morphisms arise from partially ordering such sequences by inclusion. Every acyclic partial matching $\Sigma$ corresponds to a set of minimal entrance paths, and the key to recovering the homotopy type of $Y$ is to localise $\ent{Y}$ by formally inverting all the 1-morphisms implicated by $\Sigma$. The second 2-functor of interest to us is
	\[
	\LL_\Sigma:\ent{Y} \to \loc{Y},
	\] i.e., the canonical  functor from the entrance path category to its localisation about $\Sigma$. Here is the main result of \cite{NANDA2019459}. 
	
	\begin{theorem*}[B] If $\Sigma$ is an acyclic partial matching on a simplicial complex $X$, then
	\begin{enumerate}
	    \item the localisation functor $\LL_\Sigma$ induces a homotopy equivalence of classifying spaces; and
	    \item writing $\flo{Y}$ for the full subcategory of $\loc{Y}$ spanned by unpaired simplices, the inclusion $\flo{Y} \inj \loc{\Sigma}$ also induces a homotopy equivalence.
	\end{enumerate}
	\end{theorem*}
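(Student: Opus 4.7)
The plan is to attack both parts using Quillen's Theorem~A, after developing enough combinatorial control over the localisation $\loc{Y}$ to exploit the acyclicity of $\Sigma$.

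For part (1), I would first give a concrete description of the $1$-morphisms of $\loc{Y}$ as equivalence classes of zigzags alternating descending entrance paths with formal inverses of the $\Sigma$-arrows, modulo the $2$-cells inherited from $\ent{Y}$. Acyclicity of $\Sigma$ prevents any infinite ``climb-descend'' oscillation, so a rewriting argument puts each zigzag into a canonical normal form. With this normal form in hand, I apply Quillen's Theorem~A to $\LL_\Sigma$: for each object $z$ of $\loc{Y}$, I would identify a cofinal contractible subcategory of the slice $\LL_\Sigma \downarrow z$, obtained by always flowing along $\Sigma$-arrows toward $z$ whenever possible. Acyclicity ensures this process terminates, and the $2$-cells of $\ent{Y}$ witness that competing choices are identified inside the slice, yielding contractibility and hence the desired homotopy equivalence.

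For part (2), the approach is to construct a retraction functor $\Phi : \loc{Y} \to \flo{Y}$ together with a natural transformation from $\mathrm{id}_{\loc{Y}}$ to $\iota \circ \Phi$, where $\iota$ denotes the inclusion. On objects, $\Phi(y)$ is the unpaired simplex reached by iterated discrete gradient flow from $y$: at each step one either descends a single face, or, if the current simplex is matched ``down'' by $\Sigma$, uses the now-invertible $\Sigma$-arrow to ascend to its partner and then takes a descending face arrow other than the one just inverted. Acyclicity of $\Sigma$ guarantees termination at a critical simplex, and the $2$-cells of $\loc{Y}$ identify the choices made along the way, promoting $\Phi$ to a well-defined $2$-functor. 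The natural transformation $\mathrm{id} \Rightarrow \iota \circ \Phi$ is then built from the flow itself and restricts to the identity on $\flo{Y}$, so $\iota$ admits a deformation retract and thus induces a homotopy equivalence of classifying spaces.

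The main obstacle, common to both parts, is a coherence or ``diamond'' lemma: whenever two distinct gradient-flow choices are available at a simplex, there must be a common continuation connected to both options by $2$-cells of $\ent{Y}$. Verifying this requires an induction on dimension that uses the acyclicity hypothesis, together with case analysis based on whether the intermediate faces are critical, matched upward, or matched downward. Once the diamond property is secured, Quillen's criterion for (1) and naturality for (2) follow, and together they identify the classifying spaces of $\ent{Y}$, $\loc{Y}$, and $\flo{Y}$ up to homotopy equivalence.
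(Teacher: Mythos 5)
First, a point of orientation: the paper you are working against does not prove this statement at all --- Theorem (B) is quoted from \cite{NANDA2019459} and reappears verbatim as Theorem \ref{thm:flocat}, where it is simply cited. So the comparison is really between your sketch and the argument of \cite{NANDA2019459}, which establishes both equivalences by 2-categorical cofinality-type (Theorem A) arguments for the functors $\LL_\Sigma$ and $\mathbf{J}_\Sigma$, i.e.\ by proving contractibility of suitable comma categories, not by exhibiting any retraction of $\loc{Y}$ onto $\flo{Y}$.

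The serious gap is in your part (2). The proposed retraction $\Phi:\loc{Y}\to\flo{Y}$ is not well-defined on objects: discrete gradient flow out of a non-critical simplex does not have a unique critical endpoint. For instance, if a $2$-simplex $t$ is matched with one of its edges while its other two edges are critical, your recipe ``descend a single face'' may terminate at either critical edge; these are \emph{distinct objects} of $\flo{Y}$, and the 2-morphisms of $\loc{Y}$ only relate parallel 1-morphisms with the same source and target, so they can never ``identify the choices made along the way.'' For the same reason your diamond/confluence lemma is false: discrete gradient flow is genuinely non-confluent (this non-confluence is precisely why the Morse differential counts several distinct gradient paths ending at distinct critical cells), so the key coherence step you defer cannot be carried out, and in general the inclusion $\flo{Y}\inj\loc{Y}$ admits no retraction, adjoint, or deformation of the kind you describe --- one must instead prove contractibility of the comma categories of $\mathbf{J}_\Sigma$, using the acyclicity partial order on $\Sigma$ to run the induction. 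Part (1) is closer in spirit to the actual proof, but as written it has two unaddressed problems: $\ent{Y}$ and $\loc{Y}$ are poset-enriched (2-)categories with the geometric (Duskin-style) nerve, so ordinary Quillen Theorem A does not apply and a 2-categorical version (with lax slices) is required; and the substance of the theorem --- a usable normal form for equivalence classes of $\Sigma$-zigzags and the contractibility of the slices --- is exactly what ``always flow along $\Sigma$-arrows toward $z$'' leaves unproved, and its well-definedness runs into the same non-confluence issue as part (2).
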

The classifying space of $\ent{Y}$ is homotopy equivalent to $Y$, so the above result allows us to recover $Y$ up to homotopy type from the {\bf flow category} $\flo{Y}$. This category has the unpaired simplices as objects and (posets of) $\Sigma$-localised entrance paths as morphisms.
	
	\subsection*{This Paper} Here we simultaneously generalise Theorems (A) and (B) by building a discrete Morse theory for complexes of groups. The basic steps (and our main contributions) are as follows:
	\begin{enumerate}
	    \item We extend complexes of groups and the basic construction to account for group actions on loopfree {\em poset-enriched} categories.
	    \item We give a criterion for making acyclic partial matchings on the quotient $X/{\G}$ {\em compatible} with the $\G$-action on $X$ in terms of an associated complex of groups.
	    \item For any such compatible $\Sigma$, we construct a pair $(\M{},\Psi)$ where $\M{}$ is a new complex of groups on $\flo{Y}$ while $\Psi$ is an injective natural transformation to $\ccog{\flo{Y}}{\G}$.
	\end{enumerate}
	
	We call $\M{}$ a {\bf Morse complex of groups} for $\Sigma$. Here is our main result.
	
	\begin{theorem*}
		    Let $(\F{},\Phi)$ denote a complex of groups and injective natural transformation associated to the regular action of a finite group $\G$ on a finite simplicial complex $X$. Assume that $\Sigma$ is an $\F{}$-compatible acyclic partial matching on the quotient $Y = X/\G$. If  ${\M{}:\flo{Y}\to\grp}$ is an associated Morse complex of groups and  $\Psi:\M{}\inj\cst_{\flo{Y}}$ the corresponding injective natural transformation, then the classifying space of the development $\D(\M{},\Psi)$ is $\G$-equivariantly homotopy-equivalent to $X$.
	\end{theorem*}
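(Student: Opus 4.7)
The plan is to exhibit a zigzag of $\G$-equivariant homotopy equivalences relating $X$ to the classifying space of $\D(\M{},\Psi)$ by performing the discrete Morse reductions not on $Y$ itself but on the development. Theorem~(A) gives one end of the zigzag for free: the classifying space of $\D(\F{},\Phi)$ is already $\G$-equivariantly homotopy equivalent to $X$ (indeed $\G$-equivariantly isomorphic on nerves). The other end will be produced by transferring the two homotopy equivalences of Theorem~(B) across the basic construction, using the $\F{}$-compatibility hypothesis to guarantee that the transfer is coherent with the 2-categorical data.

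\textbf{Three-step reduction at the level of complexes of groups.} I would proceed through intermediate complexes of groups on progressively richer base categories. First, extend $\F{}$ (originally a 2-functor on $\fac{Y}$) to a 2-functor $\tilde{\F{}}:\ent{Y}\to\grp$ by composing the face homomorphisms along each descending chain; the natural transformation $\Phi$ extends to $\tilde{\Phi}:\tilde{\F{}}\inj\cst_{\ent{Y}}$. Next, invoke the key consequence of $\F{}$-compatibility: every 1-morphism of $\ent{Y}$ implicated by $\Sigma$ ought to be sent by $\tilde{\F{}}$ to an invertible morphism in $\grp$ (since matched pairs force equality of stabilisers). By the universal property of localisation this induces a factorisation $\tilde{\F{}}=\hat{\F{}}\circ\LL_\Sigma$ for a 2-functor $\hat{\F{}}:\loc{Y}\to\grp$, with $\tilde{\Phi}$ likewise factoring through $\hat{\Phi}$. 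Finally, restrict $\hat{\F{}}$ and $\hat{\Phi}$ along the inclusion $\flo{Y}\inj\loc{Y}$; by construction this restriction agrees (up to canonical isomorphism) with the Morse complex of groups $(\M{},\Psi)$.

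\textbf{Passing to developments.} Each of these three moves on the base category --- extension from $\fac{Y}$ to $\ent{Y}$, localisation to $\loc{Y}$, and restriction to $\flo{Y}$ --- yields a canonical functor between the associated developments that is $\G$-equivariant because the matching $\Sigma$, being $\F{}$-compatible, lifts to a $\G$-equivariant matching upstairs. The claim to prove at each stage is that this functor of developments induces a $\G$-equivariant homotopy equivalence on classifying spaces. For the first move this is a formality (both developments have the same nerves as topological spaces). For the second and third moves, the underlying non-equivariant statements follow directly from parts (1) and (2) of Theorem~(B), applied after passing to classifying spaces.

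\textbf{The main obstacle.} The heart of the argument is therefore an equivariant upgrade of Theorem~(B): I need to verify that the homotopy equivalences of classifying spaces produced there respect the $\G$-action and, more importantly, that they descend coherently onto each stratum of the development. The natural approach is to realise each development as a 2-categorical Grothendieck construction over its base and to analyse fibres stratum by stratum. Over a given simplex $y\in Y$ the fibre of the map of developments is (up to homotopy) a coset space for the relevant stabiliser subgroup, and equivariance of $\Sigma$ together with $\F{}$-compatibility ensures that the comparison maps restrict to homotopy equivalences on these fibres. An equivariant Quillen Theorem~A (or a direct cofinality argument on the nerves, carried out $\G$-cell by $\G$-cell) then assembles these fibrewise equivalences into the desired $\G$-equivariant homotopy equivalence. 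Chaining the three such equivalences together with the basic construction of Theorem~(A) gives the result.
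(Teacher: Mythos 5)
Your skeleton (basic construction plus Theorem~(B) plus equivariance of the comparison functors) points in the right direction, but the step you flag as ``the main obstacle'' is precisely where the proof lives, and your sketch for it does not go through. First, you assert in passing that $\Sigma$ ``lifts to a $\G$-equivariant matching upstairs''; this is not automatic and is where the compatibility hypothesis is genuinely spent. In the paper this is Lemma~\ref{lem:sigmatilde}: one must check that $\tilde{\Sigma}:=\bigcup_i \G f_i$ satisfies the partition axiom (ruling out two matched morphisms in one orbit sharing a source or a target uses exactly the requirement that $\F{f}$ be an isomorphism for $f\in\Sigma$) and the acyclicity axiom (a putative $\tilde{\Sigma}$-cycle is projected to $Y$ and analysed against condition~\eqref{item:grpact2} of Definition~\ref{def:grpact}). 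Second, your claim that ``the underlying non-equivariant statements follow directly from parts (1) and (2) of Theorem~(B)'' is a category error: Theorem~(B) concerns the functors $\ent{Y}\to\loc{Y}\hookleftarrow\flo{Y}$ between the base categories over $Y$, whereas what you must show is that the induced functors between the \emph{developments} are homotopy equivalences, and these are different maps. The paper bridges this gap by identifying the developments concretely: the $\G$-action is pushed to $\mathbf{Flo}_{\tilde{\Sigma}}[X]$ (Lemma~\ref{lem:floaction}), the quotient $\mathbf{Flo}_{\tilde{\Sigma}}[X]/\G$ is shown isomorphic to $\flo{Y}$ (Proposition~\ref{prop:catiso}), $\M{}$ is exhibited as the complex of groups of this induced action for suitable lifts and transfers (Proposition~\ref{prop:liftscommute}), whence $\D(\M{},\Psi)\cong_{\G}\mathbf{Flo}_{\tilde{\Sigma}}[X]$ by Lemma~\ref{lem:cogcommute} and Proposition~\ref{prop:bascon}; only then is Theorem~(B) applied, \emph{upstairs} to $\tilde{\Sigma}$ on $X$, where the localisation and inclusion functors are $\G$-equivariant. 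None of these identifications appear in your argument.

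Your proposed substitute --- realise each development as a Grothendieck construction and assemble ``fibrewise equivalences'' by an equivariant Quillen Theorem~A --- would fail as described. The fibre of a development over an object $y$ of its base is the discrete set of cosets $\G/\Phi_y(\F{y})$, so ``homotopy equivalences on fibres'' carries essentially no information, and Quillen-type arguments require contractibility of comma categories, not fibrewise comparisons; establishing that contractibility for the localisation functor in the twisted setting would amount to reproving Theorem~(B) from scratch rather than quoting it. Relatedly, your factorisation of $\tilde{\F{}}$ through $\loc{Y}$ ``by the universal property of localisation'' appeals to a property the paper never establishes for its concrete zigzag model; the well-definedness of the induced assignment on equivalence classes of $\Sigma$-zigzags is exactly the content of Proposition~\ref{prop:morsecogwd}, and it uses the second compatibility condition of Definition~\ref{def:fcompatible}, which your argument never invokes. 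So while the high-level shape is reasonable, the proposal leaves the two essential ingredients --- the equivariant lift $\tilde{\Sigma}$ with its partition/acyclicity verification, and the identification $\D(\M{},\Psi)\cong_{\G}\mathbf{Flo}_{\tilde{\Sigma}}[X]$ --- unproven, and the mechanism offered in their place is not a valid replacement.
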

	
	\noindent The basic strategy of our proof involves lifting $\Sigma$ to a $\G$-equivariant acyclic partial matching $\tilde{\Sigma}$ on $X$. We then inherit natural $\G$-actions on $\ent{X}$, its localisation about $\tilde{\Sigma}$, and the flow category. Finally, we establish that the development $\D(\M{},\Psi)$ is isomorphic to the flow category ${\bf Flo}_{\tilde{\Sigma}}[X]$.
	
	\subsection*{Related Work}
	Morse theories for differentiable stacks have been studied by Hepworth \cite{hepworth1, hepworth2} as well as Cho-Hong \cite{chohong}. Flow categories of standard (i.e., smooth) Morse functions date back to the work of Cohen, Jones and Segal \cite{cjs}, although we are unaware of any work relating their smooth flow category to various stacky Morse theories. On the other hand, an equivariant version of discrete Morse theory for simplicial complexes was introduced by Freij in \cite{freij}. This theory does not readily descend to quotient stacks --- in particular, the group actions considered here are not necessarily regular, and hence do not admit complexes of groups. 
	
	\subsection*{Outline} In Section \ref{sec:lpcat} we describe loopfree poset-enriched categories and their functors. Sections \ref{sec:enrichedcog}, \ref{sec:grpact} and \ref{sec:dev} are dedicated to the study of group actions on these LP-categories, building the associated complexes of groups, and the basic construction. Section \ref{sec:flow} tersely summarises the LP-categorical approach to discrete Morse theory via the flow category. Section \ref{sec:morsecog} forms the heart of this paper --- here we describe the compatibility criterion for acyclic partial matchings in the presence of group actions and the construction of Morse complexes of groups for such matchings. Finally, we prove our main result in Section \ref{sec:equhomequiv} and illustrate the entire process via an example in Section \ref{sec:example}.
	
	\subsection*{Acknowledgements} We are grateful to Andr\'{e} Henriques and Ulrike Tillmann for insightful discussions. NY is supported by The Alan Turing Institute under the EPSRC grant EP/N510129/1. VN's work is supported by EPSRC grant EP/R018472/1.

	\section{LP-Categories}\label{sec:lpcat}
	
	A small category $\C$ is said to be {\em poset-enriched} if for any pair of objects $x$ and $y$ in $\C$, the set of morphisms $\C(x,y)$ from $x$ to $y$ is endowed with a partial order $\two$ so that across any triple $x,y$ and $z$ of such objects, the map
	\[
	\C(x,y) \times \C(y,z) \stackrel{\circ}{\longrightarrow} \C(x,z)
	\]
	sending $(f,f')$ to the composite $f' \circ f$ is order-preserving. In other words, whenever we have $f_0 \two f_1$ in the poset $\C(x,y)$ and $f'_0 \two f'_1$ in the poset $\C(y,z)$, then we also have the order relation $f'_0 \circ f_0 \two f'_1 \circ f_1$ between composites in the poset $\C(x,z)$. We say that $\C$ is {\em loopfree} if two conditions hold:
	\begin{enumerate}
		\item for each object $x$, the poset $\C(x,x)$ only contains the identity morphism, and
		\item for objects $x \neq y$, if $\C(x,y)$ is nonempty  then $\C(y,x)$ must be empty.
	\end{enumerate}
	We will henceforth abbreviate loopfree poset-enriched categories as {\bf LP-categories}.  Given two LP categories $\C$ and $\D$, an {\bf LP functor} $F:\C \to \D$ is an ordinary functor with the additional requirement that for any pair of objects $x, y$ in $\C$ the induced map
	\[
	F_{xy}:\C(x,y) \to \D(Fx,Fy)
	\] is order-preserving. 
	
	\medskip
	
	\begin{defn}\label{def:catspc}
		The \textbf{geometric nerve ${\Delta\C}$}  of an LP category $\C$ is the simplicial set whose vertices are the objects in $\C$, and whose $n$-simplices spanning objects $x_0,...,x_n$ consist of morphisms $f_{ij}:x_i\to x_j$ satisfying $f_{ik}\two f_{jk}\circ f_{ij}$ for all $0\leq i\leq j\leq k\leq n$, with the convention that $f_{ii}=\text{id}$. The \textbf{classifying space} $|{\Delta\C}|$ of $\C$ is the geometric realisation of its nerve.
	\end{defn}
	
   \noindent  Every LP-functor $\C \to \D$ induces a simplicial map $\Delta\C \to \Delta\D$ and hence a continuous map on classifying spaces.

	\section{Complexes of Groups on LP Categories}\label{sec:enrichedcog}
	
	There is a natural 2-categorical extension of the category of groups, where the higher morphisms arise from conjugation. More precisely, the 2-category $\grp$ is defined as follows: 
	\begin{enumerate}
		\item its objects are all groups;
		\item the 1-morphisms $\grp(\G,\HH)$ are  group homomorphisms $\G \to \HH$; and,
		\item the 2-morphisms $\phi \two \phi'$ in $\grp(\G,\HH)$ consist of all group elements $h$ in $\HH$ that satisfy $\phi(g) = h \cdot \phi'(g) \cdot h^{-1}$ for all $g$ in $\G$.
	\end{enumerate}
	
	The 1-morphisms in $\grp$ are composed in the usual manner for group homomorphisms. On the other hand, the 2-morphisms in $\grp$ will be called {\em twisting elements}, and can be composed along two different axes. The {\em vertical} composition sends each pair $(h,h')$ on the left to their product $h \cdot h'$ in $\HH$ on the right:
	\[
	\xymatrixcolsep{1.2in}
	\xymatrix
	{
		\G \ar@/^2.5em/[r]|{\phi}_*+<1em>{\Downarrow h} \ar@{->}[r]|{\phi'}_*+<.7em>{\Downarrow h'} \ar@/_2.5em/[r]|{\phi''}  & \HH &  \G \ar@/^2.5em/[r]|{\phi}_*+<3.1em>{\Downarrow h \cdot h'} \ar@/_2.5em/[r]|{\phi''}  & \HH}
	\]
	We indicate this type of composition (contravariantly) by $h' * h = h \cdot h'$ and note that $h^{-1}$ serves as an inverse for $h$. Thus, for each pair of objects $\G,\HH$ the set $\grp(\G,\HH)$ forms a 1-category whose morphisms are all invertible (i.e., a groupoid). Conversely, the {\em horizontal} composite sends the pair $(h,k)$ on the left to the product $\psi(h) \cdot k$ in $\K$ on the right: 
	\[
	\xymatrixcolsep{1.2in}
	\xymatrix
	{
		\G \ar@/^2em/[r]|{\phi}_*+<2.1em>{\Downarrow h}  \ar@/_2em/[r]|{\phi'}  & \HH \ar@/^2em/[r]|{\psi}_*+<2.1em>{\Downarrow k}  \ar@/_2em/[r]|{\psi'} & \K & \G \ar@/^2em/[r]|{\psi \circ \phi}_*+<2.1em>{\Downarrow \psi(h) \cdot k}  \ar@/_2em/[r]|{\psi' \circ \phi'} & \K
	}
	\]
	This composition is denoted by $k \circ h = \psi(h) \cdot k$. For any configuration of objects, 1 and 2-morphisms in $\grp$ of the form
	\[
	\xymatrixcolsep{1.2in}
	\xymatrix
	{
		\G \ar@/^2.5em/[r]|{\phi}_*+<1em>{\Downarrow h} \ar@{->}[r]|{\phi'}_*+<.7em>{\Downarrow h'} \ar@/_2.5em/[r]|{\phi''}  & \HH \ar@/^2.5em/[r]|{\psi}_*+<1em>{\Downarrow k} \ar@{->}[r]|{\psi'}_*+<.7em>{\Downarrow k'} \ar@/_2.5em/[r]|{\psi''} & \K},
	\]
	the {\em interchange law} holds between the horizontal and vertical compositions; that is, we have
	\[
	(k'*k) \circ (h'*h) = (k'\circ h') * (k \circ h),
	\]
	which confirms that $\grp$ is a 2-category. Our primary interest here is in a certain class of $\grp$-valued pseudofunctors, as defined below (see \cite[Chapter III.C.2]{bridson2011metric}).
	\begin{defn} \label{def:cog}
		A {\bf complex of groups} $\F{}$ over an LP-category $\C$ assigns:		
		\begin{enumerate} 
			\item to each object $x$ of $\C$ a group $\F{x}$, 
			\item to each pair of objects $x,y$ in $\C$ a functor $\F{} = \F{}_{xy}$ from the poset $\C(x,y)$ to the groupoid $\grp(\F{x},\F{y})$ with $\F{f}:\F{x} \to \F{y}$ injective for all $f:x \to y$ in $\C$; and 
			\item to every pair $(f,f') \in \C(x,y) \times \C(y,z)$ of composable morphisms in $\C$, a twisting element $\gamma(f,f') = \gamma_{\F{}}(f,f')$ of the form $\F{f'} \circ \F{f} \two \F{(f' \circ f)}$.
		\end{enumerate}	
		These assignments are subject to the following axioms: first, given any morphism $f:x \to y$ in $\C$, both the twisting elements $\gamma(\text{id}_y,f)$ and $\gamma(f,\text{id}_x)$ equal the identity natural transformation on the functor $\F{f}$. And second, across any triple of composable morphisms in $\C$, say
		\[
		\xymatrixcolsep{.85in}
		\xymatrix{
			x_0 \ar@{->}[r]^{f_1} & x_1 \ar@{->}[r]^{f_2} & x_2 \ar@{->}[r]^{f_3} & x_3,
		}
		\] 
		the associated twisting elements satisfy the following {\em cocycle condition}:
		\[
		\gamma(f_2,f_3) \cdot \gamma(f_1,f_3\circ f_2)  = 
		\F{f_3}(\gamma(f_1,f_2)) \cdot \gamma(f_2\circ f_1,f_3)  
		\]
		in the group $\F{x_3}$.
	\end{defn}
	
	Equivalently, $\F{}: \C \to \grp$ is a pseudofunctor that sends every morphism $f:x \to y$ in $\C$ to an injective group homomorphism $\F{f}: \F{x} \inj \F{y}$ in $\grp$. The cocycle condition described above is designed to ensure that the following associativity diagram commutes in the groupoid $\grp(\F{x_0},\F{x_3})$:
	\[
	\xymatrixcolsep{1.2in}
	\xymatrixrowsep{.75in}
	\xymatrix{
		\F{f_3} \circ \F{f_2} \circ \F{f_1} \ar@{=>}[r]^-{\text{id}_{\F{f_3}} \circ \gamma(f_1,f_2)} \ar@{=>}[d]_-{\gamma(f_2,f_3) \circ \text{id}_{\F{f_1}}} & \F{f_3} \circ \F{(f_2 \circ f_1)} \ar@{=>}[d]^{\gamma(f_2 \circ f_1,f_3)}\\
		\F{(f_3 \circ f_2)} \circ \F{f_1} \ar@{=>}[r]_{\gamma(f_1,f_3 \circ f_2)} &  \F{(f_3 \circ f_2 \circ f_1)}
	}
	\]
	
	The simplest example of a complex of groups, given a fixed group $\G$ and an LP-category $\C$, is the {\bf constant} $\G$-valued complex of groups over $\C$. This is denoted $\ccog{\C}{\G}$ and determined completely by the following data: it assigns the group $\G$ to every object $x$ of $\C$, the identity homomorphism $\G \to \G$ to every morphism $f:x \to y$, and the identity twisting element $1_{\G} \in \G$ to every pair of composable morphisms. The following notion (see \cite[Chapter III.C Section 2.4]{bridson2011metric} or \cite[Definition 12]{lim:thomas:08}) provides a suitable framework for comparing two complexes of groups defined on the same LP-category.
	
	\begin{defn}\label{def:cogmap}
		A {\bf morphism} $\Phi:\F{} \to \F'{}$ of complexes of groups over the LP-category $\C$ assigns
		\begin{enumerate}
			\item to each object $x$ of $\C$ a 1-morphism $\Phi_x:\F{x} \to \F'{x}$ in $\grp$, and
			\item to each 1-morphism $f:x \to y$ of $\C$ a twisting element $\tau(f) = \tau_\Phi(f)$ in $\grp$ of the form $\Phi_y \circ \F{f} \two \F'{f}\circ \Phi_x$: 
			\[
			\xymatrixcolsep{0.6in}
			\xymatrixrowsep{0.15in}
			\xymatrix{
				\F{x} \ar@{->}[dd]_{\F{f}}  \ar@{->}[rr]^{\Phi_x} &  & \F'{x}   \ar@{->}[dd]^{\F'{f}} \\
				& \scalebox{1.15}{\rotatebox[origin=c]{30}{$\stackrel{\tau(f)}{\implies}$}} & \\
				\F{y} \ar@{->}[rr]_{\Phi_y}  & &  \F'{y}
			}
			\]
		\end{enumerate}
		so that the following two axioms hold:
		\begin{enumerate}
			\item the {\em identity} axiom requires $\tau(\text{id}_x)$ to be the identity twisting element $1_{\F'{x}}:\Phi_x \two \Phi_x$ for each object $x$ of $\C$, whereas
			\item for each pair $x \stackrel{f_1}{\longrightarrow} y \stackrel{f_2}{\longrightarrow} z$ of composable $1$-morphisms of $\C$, the {\em coherence} axiom imposes a relation
			\[
			\Phi_z(\gamma_\F{}(f_1,f_2)) \cdot \tau(f_2 \circ f_1) = \tau(f_2) \cdot \F'{f_2}(\tau(f_1)) \cdot \gamma_{\F'{}}(f_1,f_2).
			\]
			in the group $\F'{z}$ among the associated twisting elements.\label{eq:coherence}
		\end{enumerate}
	\end{defn}
	
	We call $\Phi:\F{} \to \F'{}$ an epi/mono/iso-morphism of complexes of groups if each $\Phi_x$ is an epi/mono/iso-morphism of groups. The coherence axiom described above enforces the commutativity of the following pentagon in the groupoid $\grp(\F{x},\F'{z})$:
	\[
	\xymatrixcolsep{-.1in}
	\xymatrixrowsep{0.4in}
	\xymatrix{
		& \Phi_z \circ \F{f_2} \circ \F{f_1} \ar@{=>}[rr]^-{\tau(f_2) \circ \text{id}_{\F{f_1}}} \ar@{=>}[dl]_-{\text{id}_{\Phi_z} \circ \gamma_\F{}(f_1,f_2)\hspace{1em}} & & \F'{f_2} \circ \Phi_y \circ \F{f_1} \ar@{=>}[dr]^-{\text{id}_{\F'{f_2}} \circ \tau(f_1)} & \\
		\Phi_z \circ \F{(f_2 \circ f_1)} \ar@{=>}[drr]_-{\tau(f_2 \circ f_1)\hspace{1em}}  & & & &  \F'{f_2} \circ \F'{f_1} \circ \Phi_x \ar@{=>}[dll]^-{\hspace{1em}\gamma_{\F'{}}(f_1,f_2) \circ \text{id}_{\Phi_x}} \\
		& & \F'{(f_2 \circ f_1)} \circ \Phi_x & &
	}
	\]
	
	Our main interest here will be in the somewhat simpler case of monomorphisms $\Phi:\F{}\inj \ccog{\C}{\G}$, where $\ccog{\C}{\G}$ denotes the constant complex of groups over $\C$ corresponding to some fixed group $\G$. 
	
	\begin{rem}
	    	Such a morphism is prescribed by the following data: each object $x$ of $\C$ is assigned an injective group homomorphism $\Phi_x:\F{x} \inj \G$ and each morphism $f:x \to y$ of $\C$ is assigned a twisting element $\tau_\Phi(f):\F{f} \circ \Phi_y \two \Phi_x$. We further require $\tau_\Phi(\text{id}_x) = 1_{\G}$ for each $x$, and that the relation 
	\begin{align}\label{eq:injid}
	\Phi_z(\gamma_{\F{}}(f_1,f_2)) \cdot \tau_\Phi(f_2\circ f_1) = \tau_\Phi(f_2) \cdot \tau_\Phi(f_1).
	\end{align}
	holds in $\G$ across each pair $f_1:x \to y$ and $f_2:y \to z$ of composable morphisms in $\C$.
	\end{rem}

	\section{Group Actions on LP-Categories}\label{sec:grpact}
	
	The {\em automorphism group} $\text{Aut}(\C)$ of an LP-category $\C$ consists of all invertible LP-functors $F:\C \to \C$. The group structure on $\text{Aut}(\C)$ is obtained by the usual composition of such functors.
	
	\begin{defn}\label{def:grpact}
		An {\bf action} of a group $\G$ on an LP-category $\C$ is a group homomorphism $\alpha:\G \to \text{Aut}(\C)$; in particular, each group element $g$ is sent by $\alpha$ to an invertible functor $\alpha_g:\C \to \C$. Moreover, for every $g \in \G$ and non-identity morphism $f:x \to y$ in $\C$, we require:
		\begin{enumerate}
			\item $\alpha_g(x) \neq y$, and \label{item:grpact1}
			\item if $\alpha_g(x) = x$ then $\alpha_g(f) = f$. \label{item:grpact2}
		\end{enumerate}
	\end{defn}
	
	A large and interesting family of such actions is obtained whenever a group acts on a simplicial complex.
    
    \begin{example}\label{ex:grpactsimp}
	Fix a simplicial complex $X$ and a group $\G$ which acts on $X$ by invertible simplicial self-maps. Such an action is called {\em regular} (see \cite[Chapter III.1]{bredon1972compact}) if two conditions hold for every simplex $x$ --- writing $(v_0,\ldots,v_d)$ for the vertices of $x$,
    \begin{enumerate}
	\item every $g$ in the stabiliser $\G_x$ must fix the $v_i$ individually; and
	\item every simplex of the form $(g_0 v_0,\ldots,g_d v_d)$ for $g_i \in \G$ must lie in the orbit ${\G}x$.
    \end{enumerate}
    Any $\G$-action on $X$ can be made regular by passing to the second barycentric subdivision of $X$. Let $\fac{X}$ be the set of all simplices in $X$ partially ordered by the co-face relation $x \geq x'$. This poset is automatically a 1-category and hence an LP-category (with trivial partial orders on all its hom-sets). If $\G$ acts regularly on $X$, then the induced action on $\fac{X}$ satisfies both requirements of Definition \ref{def:grpact}.
	\end{example}
	
	Fix a group $\G$, an LP-category $\C$, and an action $\alpha:\G \to \text{Aut}(\C)$ as described above. In the interest of brevity, we will denote images of the form $\alpha_g(x)$ and $\alpha_g(f)$ by $gx$ and $gf$ respectively, thus omitting $\alpha$ from the notation altogether. For each object $x$ and morphism $f:x \to y$ in $\C$ we have the associated {\em object-orbits} and {\em morphism-orbits} given by
	\[
	{\G}x := \left\{g x \mid g \in \G\right\} \quad \text{and} \quad	{\G}f := \left\{g f \mid g \in \G\right\};
	\]
	similarly, we have the corresponding {\em stabilisers}
	\[
	\G_x := \left\{g \in \G \mid g x = x\right\} \quad \text{and} \quad \G_f := \left\{g \in \G \mid g f = f\right\},
	\] 
	both of which are subgroups of $\G$. It follows from Definition \ref{def:grpact}(2) that $\G_{f}$ is a subgroup of $\G_{x}$ for every morphism $f:x \to y$ in $\C$. 
	
	Given $g$ in $\G$ and $f:x \to y$ in $\C$, the source and target of $gf$ lie in ${\G}x$ and ${\G}y$ respectively. And for any pair of $\C$-morphisms $f:x \to y$ and $f':y \to z$, it is straightforward to check that the composite of any $f_1 \in {\G}f$ with any $f_2 \in {\G}f'$ must lie in the orbit ${\G}(f' \circ f)$. Thus, we have a well-defined (loopfree) 1-category $\Q$ whose objects are $\set{{\G}x \mid x \text{ is an object of }\C}$, and similarly for morphisms. In order to define a suitable quotient for the given $\G$-action on $\C$, it remains to upgrade $\Q$ to a genuine LP-category by specifying the desired partial orders on its morphism-sets.
	
	\begin{prop}\label{prop:quotposet}
		Consider the binary relation on the set of all morphisms ${\G}x \to {\G}y$ in $\Q$ prescribed by the following rule: ${\G}f \two {\G}f'$ whenever there exists some $g \in \G$ which satisfies $gf \two f'$ in $\C$. This binary relation forms a partial order on the set $\Q({\G}x,{\G}y)$.
	\end{prop}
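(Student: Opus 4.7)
The plan is to directly verify reflexivity, antisymmetry, and transitivity for the proposed relation on $\Q({\G}x,{\G}y)$. Well-definedness as a binary relation (independence of the chosen representatives of the orbits $\G f, \G f'$) is automatic from the existential quantification over $g \in \G$, and reflexivity follows instantly by taking $g$ to be the identity element of $\G$, since then $ef = f \two f$ holds in $\C(x,y)$.

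For transitivity I will exploit the fact that each $\alpha_g$ is an LP-functor, and hence acts by an order-preserving bijection on every hom-set. Concretely, if $g_1 f \two f'$ witnesses $\G f \two \G f'$ and $g_2 f' \two f''$ witnesses $\G f' \two \G f''$, then applying $\alpha_{g_2}$ to the first relation yields $g_2 g_1 f \two g_2 f'$ in $\C$, and chaining with the second gives $g_2 g_1 f \two f''$, which serves as the required witness for $\G f \two \G f''$.

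The main obstacle, and the only step that genuinely invokes the action axioms beyond LP-functoriality, is antisymmetry. Assume $g_1 f \two f'$ and $g_2 f' \two f$ in $\C$. For the first relation to hold, $f'$ must have source $g_1 x$ and target $g_1 y$; for the second to hold, $f$ must have source $g_2 g_1 x$ and target $g_2 g_1 y$, so $g_2 g_1$ stabilises both $x$ and $y$. If $f$ is an identity morphism, then $\G x = \G y$ and, combined with loopfreeness of $\C$ together with Definition \ref{def:grpact}(1), the hom-set $\Q({\G}x,{\G}x)$ collapses to the single identity, making the conclusion immediate. Otherwise, Definition \ref{def:grpact}(2) ensures $g_2 g_1 f = f$. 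I then apply $\alpha_{g_2}$ order-preservingly to $g_1 f \two f'$ to obtain $g_2 g_1 f \two g_2 f'$, which now reads $f \two g_2 f' \two f$; antisymmetry of the partial order $\two$ on $\C(x,y)$ forces $g_2 f' = f$, so $f \in \G f'$ and hence $\G f = \G f'$ as elements of $\Q({\G}x,{\G}y)$.
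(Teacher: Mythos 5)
Your proof is correct and follows essentially the same route as the paper's: reflexivity via the identity element, transitivity by applying the order-preserving action and chaining the two witnesses, and antisymmetry by producing an element of $\G$ that stabilises the source, invoking Definition \ref{def:grpact}(2), and then using antisymmetry of the partial order on $\C(x,y)$. Your separate treatment of the case where $f$ is an identity morphism (where Definition \ref{def:grpact}(2) does not literally apply) is a harmless refinement that the paper leaves implicit.
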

	\begin{proof}
		We seek to establish reflexivity, anti-symmetry and transitivity. To this end, note first that ${\G}f \two {\G}f$ always holds in $\Q({\G}x,{\G}y)$ because $f = \text{id}_{\G} f \two f$. Next, if we have ${\G}f \two {\G}f' \two {\G}f$, then there exist some $g_0$ and $g_1$ in $\G$ satisfying $g_0f \two f'$ and $g_1f' \two f$, whence 
		\[
		g_1g_0 f \two g_1 f' \two f.
		\] Now, 
		$g_1g_0 f \two f$, so $g_1g_0$ fixes the source, hence $g_1g_0$ fixes $f$, by requirement (2) of Definition \ref{def:grpact}. So in fact $f' = g_1^{-1}f$ lies in the orbit ${\G}f$ and we have ${\G}f = {\G}f'$. Finally, assume that ${\G}f \two {\G}f'$ and ${\G}f' \two {\G}f''$ both hold in $\Q({\G}x,{\G}y)$, so $gf \two f'$ and $g'f' \two f''$ hold for some $g$ and $g'$ in $\G$. Then, we immediately have $g'g f \two f''$, and hence ${\G}f \two {\G}f''$ as desired.
	\end{proof}

	We have now arrived at the desired quotient category.
	
	\begin{defn}
		The {\bf quotient} LP-category $\Q = \C/\G$ corresponding to the action of $\G$ on $\C$ is defined by the following data:
		\begin{enumerate}
			\item its objects are $\G$-orbits of $\C$'s objects,
			\item its morphisms ${\G}x \to {\G}x'$ are $\G$-orbits of $\C$'s morphisms $x \to x'$, partially ordered by ${\G}f \two {\G}f'$ iff $gf \two f'$ holds in $\C$ for some $g$ in $\G$. 
		\end{enumerate} 
		The composite of ${\G}f_1:{\G}x_0 \to {\G}x_1$ and ${\G}f_2:{\G}x_1 \to {\G}x_2$ equals ${\G}(f_2 \circ f_1)$. 
	\end{defn}

	There is a surjective {\bf orbit functor} $\p_{\G}:\C \surj \Q$ (of LP-categories) that sends each $x$ to ${\G}x$ and each $f:x \to y$ to ${\G}f:{\G}x \to {\G}y$. The following notion arises when trying to find sections of $\p_{\G}$.
	
	\begin{defn}\label{def:liftfunc}
		A {\bf lift function} for the $\G$-action on $\C$ is an assignment $\lambda:\Q_\text{ob} \to \C_\text{ob}$ of a $\C$-object $\lambda(y)$ to each $\Q$-object $y$ for which $\p_{\G} \circ \lambda(y) = y$ holds.
	\end{defn}
	
	The reader is warned that (in contrast to the orbit functor $\p_{\G}$) a lift $\lambda$ need not extend to a functor in general. The difficulty here is that morphisms $y \to y'$ in $\Q$ may not lift to morphisms $\lambda(y) \to \lambda(y')$ in $\C$. For each $f:y \to y'$ in $\Q$, we are only guaranteed the existence of a (not necessarily unique) group element $g \in \G$ and a unique morphism $\lambda(f):\lambda(y) \to g\lambda(y')$ in $\C$ so that $\p_{\G}(\lambda(f)) = f$.
	
	\begin{defn}\label{def:transfer}
		A choice of {\bf transfer elements} for a given lift function $\lambda:\Q_{\text{ob}} \to \C_\text{ob}$ assigns to each morphism $f:y \to y'$ in $\Q$ an element $\sigma := \sigma(f)$ in $\G$ so that there is a unique morphism $\lambda(f):\lambda(y) \to \sigma^{-1}\lambda(y')$ in $\C$ satisfying $\mathbf{P}_{\G}(\lambda(f)) = f$. We implicitly require the transfer element $\sigma(\text{id}_y)$ assigned to each identity morphism in $\Q$ to equal the identity element $1_{\G}$ in $\G$.
	\end{defn}
	
    Given a choice of lift function and transfer elements for the action of a group $\G$ on an LP-category $\C$, there is a standard way to construct a pair $(\F{},\Phi)$ where $\F{}$ is a complex of groups on the quotient $\Q = \C/\G$ and $\Phi:\F{} \inj \ccog{\Q}{\G}$ is an injective morphism on the constant $\G$-valued complex of groups on $\Q$.
    
    \begin{defn}\label{def:actcog}
        Let $\G$ be a group acting on an LP-category $\C$ with quotient $\Q$; let $\lambda:\Q_\text{ob} \to \C_\text{ob}$ be an associated lift function and $\set{\sigma(h) \mid h:y \to y' \text{ in }\Q}$ a choice of transfer elements for $\lambda$. The {\bf complex of groups $\F{}$ associated to these choices} is obtained by assigning
        \begin{enumerate}
        \item to each object $y$ the stabilizer $\F{y} := \G_{\lambda(y)}$ of its lift;
        \item to each morphism $h:y \to y'$ the group homomorphism $\F{h}:\F{y} \to \F{y'}$ given by conjugation with $\sigma(h)$, i.e.~$\F{h}(k)=\sigma(h)k\sigma(h)^{-1}$ for all $k\in\F{y}$; 
        \item to each order relation $h\two h^\prime$ the group element $\sigma(h\two h^\prime)\coloneqq\sigma(h)\cdot \sigma(h^\prime)^{-1}$; and finally, 
        \item to each pair $h_1: y_1 \to y_2$ and $h_2: y_2 \to y_3$ of composable morphisms, the twisting element 
        \[
        \gamma_{\F{}}(h_1,h_2) := g(h_2) \cdot g(h_1) \cdot g(h_2 \circ h_1)^{-1}.
        \]
        \end{enumerate}
        Similarly, the {\bf associated monomorphism } $\Phi: \F{} \inj \ccog{\Q}{\G}$ is given by assigning
        \begin{enumerate}
            \item to each $y$ the inclusion $\G_{\lambda(y)} \inj \G$, and
            \item to each $h: y \to y'$ the twisting element $\tau_\Phi(h) := \sigma(h)$.
        \end{enumerate}
    \end{defn}

	It has been shown in \cite[Chapter III.C.2.9]{bridson2011metric} that $\F{}$ and $\Phi$ as described above satisfy the requirements of Definitions \ref{def:cog} and \ref{def:cogmap} respectively when defined over small loopfree categories; moreover, making a different choice of lift function and transfer elements produces an isomorphic pair $(\F,\Phi)$. Similar arguments confirm that the same property holds in our context. Moreover, we have the additional data of a partial order on each hom-set, so we check that $\F{}$ is well-defined.
	
	\begin{prop}\label{prop:canonicalcogwd}
	The assignments above yield a well-defined complex of groups and associated morphism over an LP category.
	\end{prop}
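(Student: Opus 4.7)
The plan is to verify the axioms of Definitions \ref{def:cog} and \ref{def:cogmap} in two stages: first the parts inherited from the 1-categorical setting, which follow verbatim from \cite[Chapter III.C.2.9]{bridson2011metric}, and then the genuinely new assignments involving the partial-order structure on hom-sets of $\Q$.

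For the 1-categorical stage, the crucial input is axiom (2) of Definition \ref{def:grpact}: any $k \in \G_{\lambda(y)}$ fixes the non-identity morphism $\lambda(h):\lambda(y)\to\sigma(h)^{-1}\lambda(y')$, hence also its target, so $\sigma(h) k \sigma(h)^{-1}$ lies in $\G_{\lambda(y')}$. This shows that $\F{h}:\F{y}\to\F{y'}$ is a well-defined injective group homomorphism. The cocycle condition on $\gamma_{\F{}}(h_1,h_2)=\sigma(h_2)\sigma(h_1)\sigma(h_2\circ h_1)^{-1}$ and the coherence axiom on $\tau_\Phi(h)=\sigma(h)$ then reduce to direct rearrangements of products of transfer elements inside $\G$, exactly as in the small loopfree category case; the identity axioms are immediate from the convention $\sigma(\text{id})=1_\G$.

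For the new stage, I would verify that the rule $(h\two h')\mapsto \sigma(h\two h')=\sigma(h)\sigma(h')^{-1}$ upgrades $\F{}_{yy'}$ to a functor from the poset $\Q(y,y')$ to the groupoid $\grp(\F{y},\F{y'})$. The conjugation identity $\F{h}(k)=\sigma(h\two h')\cdot\F{h'}(k)\cdot\sigma(h\two h')^{-1}$ is an immediate cancellation, and functoriality on chains $h_0\two h_1\two h_2$ follows from $\sigma(h_0)\sigma(h_2)^{-1}=[\sigma(h_0)\sigma(h_1)^{-1}]\cdot[\sigma(h_1)\sigma(h_2)^{-1}]$, with identities going to the unit. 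The main obstacle is showing that this twisting element genuinely lies in the target stabiliser $\F{y'}$. My approach is to unpack Proposition \ref{prop:quotposet}: the relation $h\two h'$ lifts to $g\lambda(h)\two\lambda(h')$ for some $g\in\G$, and matching sources and targets inside the single hom-poset $\C(\lambda(y),-)$ forces $g\in\G_{\lambda(y)}$ along with $\sigma(h')\,g\,\sigma(h)^{-1}\in\G_{\lambda(y')}$. I would then refine the initial choice of transfer elements so that every poset chain in $\Q(y,y')$ lifts to a poset chain in $\C(\lambda(y),-)$, allowing $g$ to be taken trivial and yielding the desired $\sigma(h)\sigma(h')^{-1}\in\G_{\lambda(y')}$ directly. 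Independence of the resulting pair $(\F{},\Phi)$ from these choices up to isomorphism then parallels the standard Bridson--Haefliger argument.
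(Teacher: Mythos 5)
Your overall strategy coincides with the paper's, which proves Proposition~\ref{prop:canonicalcogwd} via Lemmas~\ref{lem:cancogfunctor}, \ref{lem:cancogtwisting} and \ref{lem:canmorph}: the homomorphisms $\F{h}$ are handled exactly as you do, by using Definition~\ref{def:grpact}\eqref{item:grpact2} on the unique lift $\lambda(h):\lambda(y)\to\sigma(h)^{-1}\lambda(y')$, and the cocycle and coherence identities are the same rearrangements of products of transfer elements; the conjugation identity and functoriality for $\sigma(h\two h')$ are likewise the same cancellations that appear in Lemma~\ref{lem:cancogfunctor}.

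The one place you deviate is the membership claim $\sigma(h)\sigma(h')^{-1}\in\F{y'}$, and there your proposed fix --- refining the transfer elements so that order relations lift with $g$ trivial --- is both unnecessary and, as stated, logically delicate. It is unnecessary because the lift $\lambda(h)$ is already the unique morphism in the orbit of $h$ with source $\lambda(y)$ (if $g\lambda(h)$ also had source $\lambda(y)$, then $g\in\G_{\lambda(y)}$ and Definition~\ref{def:grpact}\eqref{item:grpact2} forces $g\lambda(h)=\lambda(h)$), so it does not depend on the transfer elements at all; the transfer $\sigma(h)$ is only determined up to left multiplication by $\G_{\lambda(y')}$. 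Your own intermediate observation already finishes the argument for an arbitrary choice: if $g\lambda(h)\two\lambda(h')$, then $g$ fixes the common source $\lambda(y)$, hence fixes $\lambda(h)$, so $\lambda(h)\two\lambda(h')$ holds inside a single hom-poset of $\C$; the targets therefore coincide, $\sigma(h)^{-1}\lambda(y')=\sigma(h')^{-1}\lambda(y')$, which gives $\sigma(h')\sigma(h)^{-1}\in\G_{\lambda(y')}=\F{y'}$ directly. It is delicate because Definition~\ref{def:actcog} and the proposition concern an arbitrary choice of lifts and transfers, and deducing well-definedness for the given choice from a refined one ``up to isomorphism'' is circular: isomorphism of complexes of groups presupposes that both sides are already well-defined. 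With the refinement deleted and the direct argument substituted, your proof is complete; in fact, on this point you are more careful than the paper's Lemma~\ref{lem:cancogfunctor}, which verifies the conjugation identity for $\sigma(h\two h')$ but leaves the membership of this element in $\F{y'}$ implicit.
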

	
	The proof follows from Lemmas~\ref{lem:cancogfunctor}, \ref{lem:cancogtwisting} and \ref{lem:canmorph}.
	
	\begin{lemma}\label{lem:cancogfunctor}
	The assignment $\F{}_{yy^\prime}$ is a functor ${\Q(y,y')\to\grp(\F{y},\F(y'))}$ for every pair of objects $(y,y^\prime)$.
	\end{lemma}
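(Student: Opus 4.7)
The plan is to verify the four conditions that make $\F{}_{yy'}$ a functor from the poset $\Q(y,y')$ to the groupoid $\grp(\F{y},\F{y'})$, namely: (i) each morphism $\F{h}$ is a well-defined group homomorphism $\F{y} \to \F{y'}$; (ii) each order relation $h \two h'$ is sent to a valid 2-morphism $\F{h} \two \F{h'}$ in $\grp$; (iii) identity relations are sent to identity 2-morphisms; and (iv) vertical composition of 2-morphisms is preserved under the paper's convention $h' * h = h \cdot h'$.

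I will begin with the three routine steps. For (i), the key observation is that any $k \in \F{y} = \G_{\lambda(y)}$ fixes the source $\lambda(y)$ of the $\C$-morphism $\lambda(h):\lambda(y) \to \sigma(h)^{-1}\lambda(y')$; requirement (2) of Definition~\ref{def:grpact} then forces $k$ to fix $\lambda(h)$, and in particular its target $\sigma(h)^{-1}\lambda(y')$, which yields $\sigma(h) k \sigma(h)^{-1} \in \G_{\lambda(y')}$. Homomorphism-ness is automatic for conjugation. For (iii) and (iv), telescoping does the work: $\sigma(h \two h) = \sigma(h)\sigma(h)^{-1} = 1_{\G}$, and for any chain $h \two h' \two h''$ the composite $\sigma(h' \two h'') * \sigma(h \two h')$ equals $\sigma(h)\sigma(h')^{-1}\sigma(h')\sigma(h'')^{-1} = \sigma(h \two h'')$.

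The main obstacle will be step (ii). The required conjugation identity $\F{h}(k) = \sigma(h \two h')\,\F{h'}(k)\,\sigma(h \two h')^{-1}$ for all $k \in \F{y}$ drops out of the definitions by a one-line cancellation, so the substantive content is to verify that $\sigma(h \two h') = \sigma(h)\sigma(h')^{-1}$ actually belongs to $\F{y'} = \G_{\lambda(y')}$. My strategy here is to apply Proposition~\ref{prop:quotposet}: the hypothesis $h \two h'$ in $\Q$ supplies some $g \in \G$ with $g\lambda(h) \two \lambda(h')$ in $\C$. Comparing sources forces $g \in \G_{\lambda(y)}$, and Definition~\ref{def:grpact}(2) then upgrades this to $g\lambda(h) = \lambda(h)$, so the relation $\lambda(h) \two \lambda(h')$ already holds in $\C$. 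Since any 2-related pair of morphisms in an LP-category must be parallel, their targets must coincide, giving $\sigma(h)^{-1}\lambda(y') = \sigma(h')^{-1}\lambda(y')$ and hence the desired stabiliser membership $\sigma(h)\sigma(h')^{-1} \in \G_{\lambda(y')}$, completing the proof.
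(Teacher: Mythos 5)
Your proof is correct and follows essentially the same route as the paper's: conjugation lands in $\G_{\lambda(y')}$ by applying Definition~\ref{def:grpact}(2) to the lift $\lambda(h)$, the conjugation identity for $\sigma(h\two h')$ is a one-line cancellation, and the identity and composition laws follow by telescoping the transfer elements. The only difference is your explicit verification that $\sigma(h)\sigma(h')^{-1}$ actually lies in $\F{y'}$ (via Proposition~\ref{prop:quotposet} and Definition~\ref{def:grpact}(2)), a membership the paper's proof leaves implicit; that argument is sound and completes the justification that the assigned element is genuinely a 2-morphism in $\grp(\F{y},\F{y'})$.
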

	\begin{proof}
     For every $f:y\to y^\prime$, the map $\F{f}$ is an injective group homomorphism because it is conjugation by the chosen transfer element. The homomorphism $\F{f}$ maps from $\F{y}$ to $\F{y^\prime}$: take any $k$ in $\F{y}$ and recall that the transfer $\sigma(f)$ satisfies that there is a unique morphism $$\lambda(f):\lambda(y)\to \sigma(f)^{-1}\lambda(y^\prime)$$ such that $\p(\lambda(f))=f$. By Definition~\ref{def:grpact}\eqref{item:grpact2}, the morphism $$k\lambda(f):k\lambda(y)\to k\sigma(f)^{-1}\lambda(y^\prime)$$ is equal to $\lambda(f)$, since $k\lambda(y)=\lambda(y)$. Hence, $$k\sigma(f)^{-1}\lambda(y^\prime)=\sigma(f)^{-1}\lambda(y^\prime),$$ which implies that $\sigma(f)\cdot k\cdot \sigma(f)^{-1}\in\F{y^\prime}$, i.e., $\F{f}(k)\in\F{y^\prime}$.
	
	To show that the elements $\sigma\coloneqq \sigma(f\two f^\prime)$ are well-defined, we have to check that for any $k\in\F{y}$, we have $\F{f}(k)=\sigma\cdot\F{f^\prime}(k)\cdot\sigma^{-1}$. We have, 
	\begin{align*}
	    \sigma\cdot\F{f^\prime}(k)\cdot\sigma^{-1} &= \sigma(f)\sigma(f^\prime)^{-1}\cdot\left(\sigma(f^\prime)\cdot k\cdot \sigma(f^\prime)^{-1}\right)\cdot\sigma(f^\prime)\sigma(f)^{-1}\\
	    &= \sigma(f)\cdot k\cdot \sigma(f)^{-1}.
	\end{align*}
	
	Lastly, we check that $\F{}_{yy^\prime}$ satisfies the identity and associativity laws. The identity law follows immediately from the fact that the transfer element $\sigma(\text{id}_y)$ is the identity element in $\G$, so $\F{}_{yy^\prime}(\text{id}_y)=\text{id}_{\F{y^\prime}}$.
	
	For the associativity law to hold, we need to check that for any three morphisms $f,f^\prime,f^{\prime\prime}\in\Q(y,y^\prime)$ satisfying $f\two f^\prime\two f^{\prime\prime}$, the identity ${\sigma(f\two f^\prime)\cdot \sigma(f^\prime\two f^{\prime\prime})}={\sigma(f\two f^{\prime\prime})}$ holds: 
	\begin{align*}
	    \sigma(f\two f^\prime)\cdot\sigma(f^\prime\two f^{\prime\prime}) &= \left(\sigma(f)\sigma(f^\prime)^{-1} \right)\left(\sigma(f^\prime)\sigma(f^{\prime\prime})^{-1} \right)\\
	    &= \sigma(f)\sigma(f^{\prime\prime})^{-1}\\
	    &= \sigma(f\two f^{\prime\prime}),
	\end{align*}
	as desired.
	\end{proof}
	
	\begin{lemma}\label{lem:cancogtwisting}
	For any pair $(f_0,f_1)$ of composable morphisms, the twisting element $\gamma(f_0,f_1)$ is well-defined and satisfies the cocycle condition.
	\end{lemma}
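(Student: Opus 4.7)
The plan is to verify three assertions in turn: (i) that $\gamma(f_0,f_1)$ actually lies in the target group $\F{z}$ (where $f_0\colon x\to y$ and $f_1\colon y\to z$), (ii) that the resulting element serves as a valid twisting $\F{f_1}\circ\F{f_0}\two\F{(f_1\circ f_0)}$ in $\grp(\F{x},\F{z})$, and then (iii) that the cocycle condition holds across any triple of composable morphisms. I expect (i) to be the main obstacle, since it is the only step that really uses the hypotheses on the $\G$-action; (ii) and (iii) are essentially algebraic bookkeeping on the transfer elements.

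For (i), I would argue as follows. By Definition~\ref{def:transfer}, the lifts $\lambda(f_0)\colon\lambda(x)\to\sigma(f_0)^{-1}\lambda(y)$ and $\lambda(f_1)\colon\lambda(y)\to\sigma(f_1)^{-1}\lambda(z)$ can be assembled via the $\G$-action: translating the second by $\sigma(f_0)^{-1}$ yields a morphism $\sigma(f_0)^{-1}\lambda(f_1)\colon\sigma(f_0)^{-1}\lambda(y)\to\sigma(f_0)^{-1}\sigma(f_1)^{-1}\lambda(z)$, so the composite with $\lambda(f_0)$ is a morphism in $\C$ starting at $\lambda(x)$ whose projection under $\p_{\G}$ equals $f_1\circ f_0$. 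On the other hand, $\lambda(f_1\circ f_0)\colon\lambda(x)\to\sigma(f_1\circ f_0)^{-1}\lambda(z)$ is another such morphism. Since both share the source $\lambda(x)$ and project to the same morphism in $\Q$, they must differ by some element of $\G_{\lambda(x)}$, and Definition~\ref{def:grpact}\eqref{item:grpact2} then forces them to coincide. Equating targets yields
\[
\sigma(f_1)\sigma(f_0)\sigma(f_1\circ f_0)^{-1}\cdot\lambda(z)=\lambda(z),
\]
so $\gamma(f_0,f_1)$ lies in the stabilizer $\F{z}$ as required.

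For (ii), I would then unfold the condition that an element $\gamma\in\F{z}$ defines a 2-morphism $\F{f_1}\circ\F{f_0}\two\F{(f_1\circ f_0)}$: using the formula $\F{f}(k)=\sigma(f)\,k\,\sigma(f)^{-1}$ from Definition~\ref{def:actcog}(2), the defining equation $\F{f_1}\circ\F{f_0}(k)=\gamma\cdot\F{(f_1\circ f_0)}(k)\cdot\gamma^{-1}$ becomes a direct identity satisfied by $\gamma=\sigma(f_1)\sigma(f_0)\sigma(f_1\circ f_0)^{-1}$, and this is immediate from the cancellation of the middle factor.

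For (iii), I would verify the cocycle condition by direct computation against any triple $x_0\xrightarrow{f_1}x_1\xrightarrow{f_2}x_2\xrightarrow{f_3}x_3$. Expanding $\gamma(f_2,f_3)\cdot\gamma(f_1,f_3\circ f_2)$ the middle $\sigma(f_3\circ f_2)^{-1}\sigma(f_3\circ f_2)$ pair telescopes, leaving $\sigma(f_3)\sigma(f_2)\sigma(f_1)\sigma(f_3\circ f_2\circ f_1)^{-1}$. Expanding $\F{f_3}(\gamma(f_1,f_2))\cdot\gamma(f_2\circ f_1,f_3)$, the conjugation by $\sigma(f_3)$ produces exactly the factor that cancels with the following $\sigma(f_3)$, and again the $\sigma(f_2\circ f_1)^{-1}\sigma(f_2\circ f_1)$ pair telescopes, yielding the same expression. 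Hence both sides agree in $\F{x_3}$, confirming the cocycle condition.
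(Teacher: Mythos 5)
Your proposal is correct and follows essentially the same route as the paper: parts (ii) and (iii) are exactly the paper's direct telescoping computations with the transfer elements, which the paper compresses into "a routine calculation" plus the observation that both sides of the cocycle identity equal $\sigma(f_2)\sigma(f_1)\sigma(f_0)\sigma(f_2\circ f_1\circ f_0)^{-1}$. Your step (i), showing via uniqueness of lifts and Definition~\ref{def:grpact}\eqref{item:grpact2} that $\gamma(f_0,f_1)$ actually stabilises $\lambda(z)$ and hence lies in $\F{z}$, is a correct and welcome piece of bookkeeping that the paper leaves implicit (it mirrors the argument used in Lemma~\ref{lem:cancogfunctor} for the homomorphisms $\F{f}$).
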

	\begin{proof}
	For each pair of composable morphisms $ y\xrightarrow{f_0}y'\xrightarrow{f_1}y'',$ it is a routine calculation to check that the twisting element $\gamma(f_0,f_1)$ satisfies $$ \F{f_1}\circ\F{f_0}(k)=\gamma(f_0,f_1)\cdot\F{}(f_1\circ f_0)(k)\cdot\gamma(f_0,f_1)^{-1}$$ for any $k\in\F{y}$.
	
	The twisting elements also satisfy the cocycle condition: for any sequence of composable morphisms $$ y\xrightarrow{f_0}y'\xrightarrow{f_1}y''\xrightarrow{f_2}y'''$$ both $\gamma(f_1,f_2)\gamma(f_0,f_2\circ f_1)$ and ${\F{f_2}(\gamma(f_0,f_1))\gamma(f_1\circ f_0,f_2)}$ are equal to $$\sigma(f_2)\sigma(f_1)\sigma(f_0)\sigma(f_2\circ f_1\circ f_0)^{-1},$$ as desired.
	\end{proof}
	
	\begin{lemma}\label{lem:canmorph}
	The morphism $\Phi:\F{}\to\cst_{\mathcal{Q}}$ is well-defined and satisfies the coherence axiom.
	\end{lemma}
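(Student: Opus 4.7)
The plan is to verify, in order, the three conditions that Definition \ref{def:cogmap} imposes on $\Phi$, specialised to the case where the target is the constant complex of groups $\cst_{\Q}$. Because $\cst_{\Q}$ sends every morphism to $\text{id}_\G$ and every composable pair to the identity twisting element $1_\G \in \G$, the general coherence axiom collapses to the simpler equation \eqref{eq:injid} from the remark at the end of Section \ref{sec:enrichedcog}.

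First I would confirm that, for each morphism $h:y \to y'$ in $\Q$, the group element $\tau_\Phi(h) := \sigma(h)$ really is a twisting element of the correct type $\Phi_{y'} \circ \F{h} \two \Phi_y$ in the groupoid $\grp(\F{y},\G)$. Unwinding the definition of 2-morphism in $\grp$ from Section \ref{sec:enrichedcog}, this amounts to the identity
\[
\Phi_{y'}\bigl(\F{h}(k)\bigr) = \sigma(h) \cdot \Phi_y(k) \cdot \sigma(h)^{-1}
\]
for every $k \in \F{y}$. Since $\Phi_{y'}$ and $\Phi_y$ are subgroup inclusions and $\F{h}$ is conjugation by $\sigma(h)$ by Definition \ref{def:actcog}, both sides equal $\sigma(h) k \sigma(h)^{-1}$. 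That this conjugate actually lies in $\F{y'}$, so that $\Phi_{y'}$ can be evaluated on it, has already been established in Lemma \ref{lem:cancogfunctor}.

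Next, the identity axiom reduces to $\sigma(\text{id}_y) = 1_\G$, which is built into Definition \ref{def:transfer}. For the coherence axiom across a composable pair $f_1:x \to y$ and $f_2:y \to z$, I would substitute $\tau_\Phi(f_i) = \sigma(f_i)$ and $\gamma_{\F{}}(f_1,f_2) = \sigma(f_2)\sigma(f_1)\sigma(f_2\circ f_1)^{-1}$ into \eqref{eq:injid} and use that $\Phi_z$ is a subgroup inclusion, obtaining
\[
\sigma(f_2)\sigma(f_1)\sigma(f_2\circ f_1)^{-1} \cdot \sigma(f_2\circ f_1) = \sigma(f_2) \cdot \sigma(f_1),
\]
which is a tautology in $\G$. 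The only auxiliary input here is that $\gamma_{\F{}}(f_1,f_2)$ really does land in $\F{z} = \G_{\lambda(z)}$, which is part of Lemma \ref{lem:cancogtwisting}.

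There is no substantive obstacle: once one has digested the conventions for horizontal and vertical composition of twisting elements in Section \ref{sec:enrichedcog}, the whole statement is algebraic bookkeeping. The only mildly delicate point is keeping the direction of the twisting elements straight and exploiting the trivialities $\cst_{\Q}(h) = \text{id}_\G$ and $\gamma_{\cst_{\Q}} \equiv 1_\G$ to collapse the five-term coherence pentagon of Definition \ref{def:cogmap} down to the single identity \eqref{eq:injid}.
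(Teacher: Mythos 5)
Your proposal is correct and follows essentially the same route as the paper's own proof: verify that $\tau_\Phi(f)=\sigma(f)$ is a twisting element $\Phi_{y'}\circ\F{f}\two\Phi_y$ via the inclusion-plus-conjugation computation, note the identity axiom is built into the choice of transfers, and reduce the coherence pentagon to the tautology $\bigl(\sigma(f_2)\sigma(f_1)\sigma(f_2\circ f_1)^{-1}\bigr)\cdot\sigma(f_2\circ f_1)=\sigma(f_2)\sigma(f_1)$. Your explicit citations of Lemmas \ref{lem:cancogfunctor} and \ref{lem:cancogtwisting} for why the relevant elements lie in the correct stabiliser subgroups are a slight (and welcome) refinement of details the paper leaves implicit.
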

	\begin{proof}
	For any $f:y\to y'$, it follows immediately from the fact that $\Phi_y$ and $\Phi_{y'}$ are inclusions into $\G$ and $\F{f}$ is conjugation by $\gamma_\Phi=\sigma(f)$, that $\gamma_\Phi:\Phi_{y'}\circ\F{f}\two\Phi_y$.
	
	The identity axiom is immediately satisfied because the transfer element associated to any identity morphism is the identity element in $\G$.
	
	Lastly, we check the coherence axiom, for any sequence of composable morphisms $y\xrightarrow{f_0}y'\xrightarrow{f_1}y''$. The left-hand side of the coherence axiom (Equation~\eqref{eq:coherence}) yields:
	\begin{align*}
	    \Phi_{y''}\left(\gamma_{\F{}}(f_0,f_1)\right)\cdot \gamma_\Phi(f_1\circ f_0) &= \left(\sigma(f_1)\sigma(f_0)\sigma(f_1\circ f_0)^{-1} \right)\cdot \sigma(f_1\circ f_0) \\
	    &= \sigma(f_1)\sigma(f_0),
	\end{align*}
	and the right-hand side yields:
	\begin{align*}
	    \gamma_\Phi(f_1)\cdot\cst f_1\left(\gamma_\Phi(f_0) \right)\cdot\gamma_{\cst}(f_1,f_2) &= \sigma(f_1)\cdot \text{id}(\sigma(f_0))\cdot 1_{\G} \\
	    &= \sigma(f_1)\sigma(f_0),
	\end{align*}
	as required.
	\end{proof}
	
	\section{Development and the Basic Construction}\label{sec:dev}
	
	Our goal here is to prove the following result, which forms a mild enhancement of \cite[Chapter III.C, Theorem 2.13]{bridson2011metric} to the poset-enriched setting. Throughout, we fix an LP-category $\Q$ and a group $\G$. 
	
	\begin{prop}\label{prop:bascon}
		Let $\F{}:\Q \to \grp$ be a complex of groups and $\Phi:\F{} \to \ccog{\Q}{\G}$ a monomorphism to the constant $\G$-valued complex of groups over $\Q$. Associated to this pair $(\F{},\Phi)$ is a new LP-category $\D = \D(\F{},\Phi)$ equipped with a natural $\G$-action so that 
		\begin{enumerate}
		    \item the quotient $\D/\G$ is isomorphic to $\Q$; moreover, 
		    \item there exist choices of lift function and transfer elements for the $\G$-action on $\D$ which produce the pair $(\F{},\Phi)$; and finally, 
		  \item if $\F{}$ and $\Phi$ are the complex of groups and monomorphism associated to a $\G$-action on an LP-category $\C$ (subject to some choices), then there exists a $\G$-equivariant isomorphism of LP-categories between $\C$ and $\D$.
		\end{enumerate}
	\end{prop}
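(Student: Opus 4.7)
The plan is to construct $\D = \D(\F{}, \Phi)$ explicitly through cosets and then verify the three claims in order. Setting $A_y := \Phi_y(\F{y}) \subset \G$, I take the objects of $\D$ to be pairs $(y, gA_y)$ with $y \in \Q_{\mathrm{ob}}$ and $gA_y \in \G/A_y$, with $\G$ acting by left translation on the coset. A $1$-morphism $(y, gA_y) \to (y', g'A_{y'})$ is represented by a pair $(h, g)$ with $h: y \to y'$ in $\Q$ and $g$ a representative of the source coset; the target is then forced to be $g\tau_\Phi(h)^{-1}A_{y'}$, and the equivalence $(h, g) \sim (h, g\Phi_y(k))$ for $k \in \F{y}$ is well-defined on the target via the twisting identity $\Phi_{y'}(\F{h}(k)) = \tau_\Phi(h)\Phi_y(k)\tau_\Phi(h)^{-1}$. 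Composition is declared by $[h_2, g_1\tau_\Phi(h_1)^{-1}] \circ [h_1, g_1] := [h_2 \circ h_1, g_1]$, its well-definedness being a direct application of the cocycle identity $\Phi_{y''}(\gamma_{\F{}}(h_1, h_2))\tau_\Phi(h_2 \circ h_1) = \tau_\Phi(h_2)\tau_\Phi(h_1)$. The partial order is $[h, g] \two [h', g]$ whenever $h \two h'$ in $\Q$.

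First, I would verify that $\D$ is an LP-category and that the $\G$-action meets Definition~\ref{def:grpact}. Both conditions there reduce to features of the morphism equivalence: loopfreeness of $\Q$ forces any endo-morphism in $\D$ to be an identity, and if $g$ fixes the source $(y, g_0A_y)$ then $g_0^{-1}gg_0 \in A_y$, making $g \cdot [h, g_0] = [h, gg_0] = [h, g_0]$ directly. For claim (1), the $\G$-orbits of objects and morphisms of $\D$ collapse cosets and recover exactly the objects and morphisms of $\Q$, with the partial order descending correctly, so $\D/\G \cong \Q$. For claim (2), taking $\lambda(y) := (y, A_y)$ and $\sigma(h) := \tau_\Phi(h)$, the stabiliser $\G_{\lambda(y)}$ is precisely $A_y$, identified with $\F{y}$ via $\Phi_y$; conjugation by $\sigma(h)$ recovers $\F{h}$ by the twisting identity, and the cocycle identity recovers $\gamma_{\F{}}$, so Definition~\ref{def:actcog} reproduces the original pair $(\F{}, \Phi)$.

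For claim (3), suppose $(\F{}, \Phi)$ arises from a $\G$-action on an LP-category $\C$ with chosen lift function $\lambda_\C$ and transfer elements $\sigma_\C$. I would define $\iota: \C \to \D$ on objects by sending $g \cdot \lambda_\C(y) \mapsto (y, gA_y)$, which is well-defined because the stabiliser of $\lambda_\C(y)$ is exactly $A_y$, and on morphisms by sending the distinguished lift $\lambda_\C(h): \lambda_\C(y) \to \sigma_\C(h)^{-1}\lambda_\C(y')$ to $[h, 1_\G]$ and extending $\G$-equivariantly. Bijectivity is clear because both sides stratify as $\coprod_{y \in \Q_{\mathrm{ob}}} \G/A_y$ with compatible morphism-sets, and functoriality, equivariance, and preservation of the partial order follow from the construction.

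The main obstacle is the meticulous bookkeeping of twisting elements throughout the well-definedness and composition checks; every nontrivial identity ultimately reduces either to the twisting relation for $\tau_\Phi$ or to the cocycle condition. The subtlest point concerns the compatibility between the partial order on $\Q(y,y')$ and the twisting elements $\tau_\Phi$: to ensure that $[h, g]$ and $[h', g]$ genuinely share source and target whenever $h \two h'$, one needs $\tau_\Phi(h)\tau_\Phi(h')^{-1} \in A_{y'}$. This coherence is implicit in the pseudofunctor formulation of $\Phi$ and is automatic in the canonical example of Definition~\ref{def:actcog} (where $\sigma(h \two h') = \sigma(h)\sigma(h')^{-1}$ visibly lies in $\F{y'}$); isolating it explicitly is the technical price of working over an LP-enriched base rather than an ordinary $1$-category.
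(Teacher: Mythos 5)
Your construction is essentially the paper's own: the same coset objects $(y,g\Phi_y(\F{y}))$, the same morphisms with target forced by $\tau_\Phi$, well-definedness via the twisting relation (the paper's Proposition 5.2), composition via the cocycle identity \eqref{eq:injid} (Proposition 5.3), the same lift function $y\mapsto(y,\Phi_y(\F{y}))$ and transfers $\sigma(h)=\tau_\Phi(h)$ for claim (2), and for claim (3) the same comparison isomorphism, only written in the direction $\C\to\D$ where the paper writes $\h:\D\to\C$, $(x,\cset_x(g))\mapsto g\lambda(x)$. The one place you diverge is the partial order on $\D$: you declare $[h,g]\two[h',g]$ whenever $h\two h'$ in $\Q$ and then defer the requirement $\tau_\Phi(h)\tau_\Phi(h')^{-1}\in\Phi_{y'}(\F{y'})$ as a coherence ``implicit in the pseudofunctor formulation,'' whereas the paper makes this coset condition part of the \emph{definition} of the order relation on $\D$. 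That choice is not cosmetic: Definition \ref{def:cogmap} as stated imposes no 2-naturality constraint tying $\tau_\Phi(h)$ to $\tau_\Phi(h')$ across an order relation $h\two h'$, so for an abstract pair $(\F{},\Phi)$ the condition need not hold, and without it your two morphisms $[h,g]$ and $[h',g]$ may have distinct targets and cannot be order-related in an LP-category. The correct repair is exactly the paper's conditional definition; as you observe, in the canonical case of Definition \ref{def:actcog} the condition is automatic (since $\sigma(h)\sigma(h')^{-1}=\sigma(h\two h')\in\F{y'}$), which is all that is needed for claims (2)--(3) and for the later applications, so with that adjustment your argument goes through and matches the paper's.
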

	
	The LP-category $\D$ whose existence and uniqueness (up to equivariant isomorphism) are guaranteed by the above result is called the {\bf development} of the given pair $(\F{},\Phi)$. For each object $x$ of $\Q$ and each group element $g$ in $\G$, let $\cset_x:\G \twoheadrightarrow \G/\Phi_x(\F{x})$ be the map (of sets) which assigns to each group element $g$ the corresponding left coset $g\Phi_x(\F{x})$. With this shorthand in place, the development $\D$ can be provisionally defined as follows:
	\begin{enumerate}
	    \item its objects are pairs $(x,\cset_x(g))$ where $x$ is an object of $\Q$ and $g$ is an element of $\G$;
	    \item the morphisms $(x,\cset_x(g)) \to (y,\cset_y(h))$ are all pairs $(f,\cset_x(g))$ where $f:x \to y$ is a morphism in $\Q$ and the following cosets coincide:
	    \[
	    \cset_y(h) = \cset_y\left(g\cdot \tau_\Phi(f)^{-1}\right).
	    \]
	    \end{enumerate}
	    Two such morphisms satisfy the partial order relation $(f,\cset_x(g)) \two (f',\cset_x(g))$ within $\D$ whenever $f \two f'$ holds in $\Q$, and $\tau_\Phi(f)\cdot\tau_\Phi(f^\prime)^{-1}$ lies in $\Phi_y(\F y)$. The composite 
	   	\begin{align}\label{eq:devcomp}
		\xymatrixcolsep{1in}
		\xymatrix{
			(x,\cset_{x}(g_1)) \ar@{->}[r]^-{(f_1,\cset_x(g_1))} & (y,\cset_y(g_2)) \ar@{->}[r]^-{(f_2,\cset_y(g_2))} & (z,\cset_z(g_3))
		}
		\end{align} 
		is given by $(f_2\circ f_1, \cset_x(g_1))$, and the group $\G$ acts on $\D$ as follows: the element $h \in \G$ sends each object $(x,\cset_x(g))$ to $(x,\cset_x(h \cdot g))$, and similarly for morphisms.

    We first check that the morphisms $(f,\cset_x(g))$ of $\D$ are well-defined, i.e., independent of the choice of representative element $g$ of the coset.
    
    \begin{prop}\label{prop:Dwelldef}
        Let $f:x \to y$ be a morphism in $\C$. If $\cset_x(g) = \cset_x(h)$ holds for $g,h$ in $\G$, then we also have
        \[
        \cset_y\left(g \cdot \tau_\Phi(f)^{-1}\right) = \cset_y\left(h \cdot \tau_\Phi(f)^{-1}\right).
        \]
    \end{prop}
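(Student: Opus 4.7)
The plan is to unwind the coset equality $\cset_x(g) = \cset_x(h)$ into an explicit element of $\Phi_x(\F{x})$ and then use the defining property of the twisting element $\tau_\Phi(f)$ (it witnesses the 2-morphism $\Phi_y \circ \F{f} \two \Phi_x$) to push that element across $f$ to land inside $\Phi_y(\F{y})$.

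Concretely, I would first rewrite the hypothesis $\cset_x(g) = \cset_x(h)$ as the existence of an element $k \in \F{x}$ with $h^{-1}g = \Phi_x(k)$. The desired equality of cosets $\cset_y(g\cdot\tau_\Phi(f)^{-1}) = \cset_y(h\cdot\tau_\Phi(f)^{-1})$ is then equivalent to showing
\[
\tau_\Phi(f)\cdot h^{-1}g\cdot \tau_\Phi(f)^{-1} \in \Phi_y(\F{y}),
\]
i.e., $\tau_\Phi(f)\cdot \Phi_x(k)\cdot \tau_\Phi(f)^{-1} \in \Phi_y(\F{y})$.

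Now I would invoke the fact that $\tau_\Phi(f)$ is a twisting element $\Phi_y\circ\F{f}\two\Phi_x$ in the 2-category $\grp$. Recalling that a 2-morphism $h: \phi\two\phi'$ in $\grp(\G,\HH)$ is by definition an element satisfying $\phi(g)=h\cdot\phi'(g)\cdot h^{-1}$, applied to our situation with $\phi=\Phi_y\circ\F{f}$ and $\phi'=\Phi_x$ and input $k\in\F{x}$ gives
\[
\Phi_y(\F{f}(k)) \;=\; \tau_\Phi(f)\cdot\Phi_x(k)\cdot\tau_\Phi(f)^{-1}.
\]
Since $\F{f}(k)\in\F{y}$, the right-hand side lies in $\Phi_y(\F{y})$, which is precisely what was required.

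This proof is essentially a one-liner once the twisting-element identity is recognised, so I do not anticipate any real obstacle; the only thing to be careful about is the convention direction for the 2-morphism (whether $\tau_\Phi(f)$ conjugates $\Phi_x$ into $\Phi_y\circ\F{f}$ or vice versa), which is fixed by Definition 3.2 and the accompanying diagram.
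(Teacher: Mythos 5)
Your proposal is correct and follows essentially the same route as the paper's own proof: both unwind the hypothesis into an element $\Phi_x(k)$ and then apply the twisting-element identity $\Phi_y(\F{f}(k)) = \tau_\Phi(f)\cdot\Phi_x(k)\cdot\tau_\Phi(f)^{-1}$ to conclude that the conjugate lies in $\Phi_y(\F{y})$. Your handling of the 2-morphism direction also matches the paper's convention, so there is nothing to add.
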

    \begin{proof}
        Let $k$ be any element of the group $\F{x}$; since $\tau_\Phi(f)$ is a twisting element of the form $\F{f} \circ \Phi_y \two \Phi_x$ by Definition \ref{def:cogmap}, we have 
        \[
        \tau_\Phi(f)\cdot\Phi_x(k)\cdot\tau_\Phi(f)^{-1} = \Phi_y \circ \F{f}(k).
        \]
    Since $\F{f}$ takes values in $\F{y}$, the cosets $\cset_y(\tau_\Phi(f)^{-1})$ and $\cset_y(\Phi_x(k)\cdot \tau_\Phi(f)^{-1})$ are equal regardless of $k$, so the desired conclusion follows. 
    \end{proof}	
    
    Next, we show that the composition of $(f_1,\cset_x(g_1))$ with $(f_2,\cset_y(g_2))$, as described in \eqref{eq:devcomp}, produces a morphism $(x,\cset_x(g_1)) \to (z,\cset_z(g_3))$ in $\D$.
    
    \begin{prop}
    If $\cset_y(g_2) = \cset_y(g_1 \cdot \tau_\Phi(f_1)^{-1})$ and $\cset_z(g_3) = \cset_z(g_2 \cdot \tau_\Phi(f_2)^{-1})$ hold, then we also have $\cset_z(g_3) = \cset_z(g_1 \cdot \tau_\Phi(f_2 \circ f_1)^{-1})$.
    \end{prop}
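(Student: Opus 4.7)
The plan is to chain together the two hypotheses by exploiting the interplay between the cocycle relation from Equation \eqref{eq:injid} and the defining property of $\tau_\Phi(f_2)$ as a twisting element of the form $\Phi_z \circ \F{f_2} \two \Phi_y$. The latter says precisely that for every $k \in \F{y}$ we have the intertwining identity
\[
\tau_\Phi(f_2) \cdot \Phi_y(k) = \Phi_z(\F{f_2}(k)) \cdot \tau_\Phi(f_2)
\]
inside $\G$, which is the key tool for moving $\Phi_y$-values past $\tau_\Phi(f_2)^{-1}$.

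First, I would unpack the first hypothesis as the existence of some $k \in \F{y}$ with $g_2 = g_1 \cdot \tau_\Phi(f_1)^{-1} \cdot \Phi_y(k)$. Substituting into $g_2 \cdot \tau_\Phi(f_2)^{-1}$ and applying the intertwining identity above (in the form $\Phi_y(k) \cdot \tau_\Phi(f_2)^{-1} = \tau_\Phi(f_2)^{-1} \cdot \Phi_z(\F{f_2}(k))$) yields
\[
g_2 \cdot \tau_\Phi(f_2)^{-1} = g_1 \cdot \tau_\Phi(f_1)^{-1} \cdot \tau_\Phi(f_2)^{-1} \cdot \Phi_z(\F{f_2}(k)).
\]

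Next, I would invoke the cocycle relation \eqref{eq:injid} to rewrite
\[
\tau_\Phi(f_2 \circ f_1)^{-1} = \tau_\Phi(f_1)^{-1} \cdot \tau_\Phi(f_2)^{-1} \cdot \Phi_z\!\left(\gamma_{\F{}}(f_1,f_2)\right),
\]
and therefore
\[
g_1 \cdot \tau_\Phi(f_2 \circ f_1)^{-1} = g_1 \cdot \tau_\Phi(f_1)^{-1} \cdot \tau_\Phi(f_2)^{-1} \cdot \Phi_z\!\left(\gamma_{\F{}}(f_1,f_2)\right).
\]

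Comparing the two displays shows that $g_2 \cdot \tau_\Phi(f_2)^{-1}$ and $g_1 \cdot \tau_\Phi(f_2 \circ f_1)^{-1}$ differ on the right by $\Phi_z\!\left(\F{f_2}(k)^{-1} \cdot \gamma_{\F{}}(f_1,f_2)\right)$, an element of $\Phi_z(\F{z})$. Hence they determine the same left coset in $\G/\Phi_z(\F{z})$, which together with the second hypothesis $\cset_z(g_3) = \cset_z(g_2 \cdot \tau_\Phi(f_2)^{-1})$ gives the desired equality. There is no substantial obstacle here: the entire argument is symbol manipulation in $\G$, and the only thing to watch is the noncommutativity, which makes it essential to apply the twisting-element identity on the correct side when pushing $\Phi_y(k)$ past $\tau_\Phi(f_2)^{-1}$.
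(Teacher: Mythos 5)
Your argument is correct and follows essentially the same route as the paper: both unpack the first hypothesis as $g_2 = g_1\cdot\tau_\Phi(f_1)^{-1}\cdot\Phi_y(k)$, absorb the $\Phi_y(k)$ term using the twisting-element conjugation identity, and then conclude with relation \eqref{eq:injid}. The only difference is cosmetic — where you push $\Phi_y(k)$ past $\tau_\Phi(f_2)^{-1}$ by hand, the paper cites Proposition \ref{prop:Dwelldef}, whose proof is exactly that conjugation step.
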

    \begin{proof}
    Since the cosets $\cset_y(g_2)$ and $\cset_z(g_1 \cdot \tau_\Phi(f_1)^{-1})$ are equal, there exists some $h \in \F{z}$ satisfying $\tau_\Phi(f_1) \cdot g_1^{-1} \cdot g_2 = \Phi_y(h)$, whence
    \[
    g_2 = g_1 \cdot \tau_\Phi(f_1)^{-1} \cdot \Phi_y(h).
    \]
    Using this expression for $g_2$ along with our assumption that $\cset_z(g_3) = \cset_z(g_2 \cdot \tau_\Phi(f_2)^{-1})$, we obtain
    \begin{align*}
        \cset_z(g_3) &= \cset_z(g_1 \cdot \tau_\Phi(f_1)^{-1} \cdot \Phi_y(h) \cdot \tau_\Phi(f_2)^{-1}) & \\
        &= \cset_z(g_1 \cdot \tau_\Phi(f_1)^{-1} \cdot \tau_\Phi(f_2)^{-1}) & \text{ by Prop \ref{prop:Dwelldef}}, \\
        &= \cset_z(g_1 \cdot \tau_\Phi(f_2 \circ f_1)^{-1}) & \text{ by \eqref{eq:injid}},
    \end{align*}
    as desired.
    \end{proof}

    It is not difficult to confirm that $\Q$ is the quotient $\D/\G$. To obtain $(\F{},\Phi)$ for the $\G$-action on $\D$ as per Definition \ref{def:actcog}, one uses the lift function $x \mapsto (x,\cset_y(1_{\G}))$ and the transfer elements $f \mapsto \tau_{\Phi}(f)$ for each object $x$ and morphism $f$ of $\Q$. Any group element $h$ acts on $(x,\cset_y(g))$ by mapping it to $(x,\cset_y(h\cdot g))$. Thus, it only remains to establish the universal property from assertion (3) of Proposition \ref{prop:bascon}.
    
    \begin{prop}
    Let $\C$ be any LP-category upon which $\G$ acts with quotient $\C/\G$ equal to $\Q$. If there exists a choice of lifts and transfers for this action which produces $(\F,\Phi)$ as the associated complex of groups and monomorphism to $\ccog{\C}{\G}$, then there exists a $\G$-equivariant isomorphism of LP-categories $\D(\F{},\Phi) \cong_{\G} {\C}$.
    \end{prop}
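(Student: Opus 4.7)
The plan is to construct an explicit equivariant LP-isomorphism $\Theta\colon\D(\F{},\Phi)\to\C$ using the same lift function $\lambda$ and transfer elements $\sigma$ that produced $(\F{},\Phi)$ from the $\G$-action on $\C$. On objects, set $\Theta(y,\cset_y(g)) = g\cdot\lambda(y)$; on morphisms, set $\Theta(f,\cset_x(g)) = g\cdot\lambda(f)$. Well-definedness on coset representatives is immediate from Definition~\ref{def:grpact}\eqref{item:grpact2}: any $k\in\F{y} = \G_{\lambda(y)}$ fixes $\lambda(y)$ and therefore also fixes every morphism with source $\lambda(y)$, including $\lambda(f)$. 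Equivariance is tautological. Bijectivity on objects and morphisms follows from the orbit-stabiliser decomposition together with the observation that, given a prescribed source, every lift of a fixed $\Q$-morphism is uniquely determined (again by Definition~\ref{def:grpact}\eqref{item:grpact2}, which forces $\G_{\lambda(f)} = \G_{\lambda(x)} = \F{x}$).

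The core step is functoriality. Identity preservation follows from the convention $\sigma(\text{id}_y)=1_\G$, which by uniqueness forces $\lambda(\text{id}_y) = \text{id}_{\lambda(y)}$. The nontrivial piece is to establish that, inside $\C$,
\[
\lambda(f_2\circ f_1)\;=\;\sigma(f_1)^{-1}\lambda(f_2)\;\circ\;\lambda(f_1)
\]
for each composable pair $y_0\xrightarrow{f_1}y_1\xrightarrow{f_2}y_2$ in $\Q$. Both sides start at $\lambda(y_0)$ and project to $f_2\circ f_1$ under $\p_\G$; the targets coincide precisely because the cocycle element $\gamma_{\F{}}(f_1,f_2) = \sigma(f_2)\sigma(f_1)\sigma(f_2\circ f_1)^{-1}$ lies in $\G_{\lambda(y_2)}$. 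Uniqueness of lifts (Definition~\ref{def:transfer}) then forces equality, and applying the $g_1$-action recovers the compatibility $\Theta(f_2,\cset_{y_1}(g_1\sigma(f_1)^{-1}))\circ\Theta(f_1,\cset_{y_0}(g_1)) = \Theta(f_2\circ f_1,\cset_{y_0}(g_1))$.

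Finally, for order-preservation, assume $(f,\cset_x(g))\two(f',\cset_x(g))$ in $\D$, so $f\two f'$ in $\Q$. Proposition~\ref{prop:quotposet} provides some $h\in\G$ with $h\lambda(f)\two\lambda(f')$ in $\C$; matching sources forces $h\in\F{x}$, whereupon Definition~\ref{def:grpact}\eqref{item:grpact2} collapses $h\lambda(f) = \lambda(f)$, giving $\lambda(f)\two\lambda(f')$ directly, and translating by $g$ yields the required relation. The inverse $\Theta^{-1}$ sends $\phi\colon z\to z'$ to $(\p_\G(\phi),\cset_{\p_\G(z)}(g))$, where $g$ is the unique element modulo $\F{\p_\G(z)}$ satisfying $g\cdot\lambda(\p_\G(z)) = z$; reversing the argument above shows $\Theta^{-1}$ is also order-preserving. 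I expect the main obstacle to be the functoriality calculation: it is exactly the cocycle condition packaged into $\gamma_{\F{}}(f_1,f_2)$ that ensures the geometric composite of two chosen lifts lands on the same lift as $\lambda(f_2\circ f_1)$, after which uniqueness of lifts identifies them outright. The order step, by contrast, is surprisingly trivial thanks to the rigidity imposed by Definition~\ref{def:grpact}\eqref{item:grpact2}.
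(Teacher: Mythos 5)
Your proposal is correct and follows essentially the same route as the paper: you build the same functor $(x,\cset_x(g))\mapsto g\lambda(x)$, $(f,\cset_x(g))\mapsto g\lambda(f)$ from the chosen lifts and transfers, and verify well-definedness, bijectivity, equivariance and order-preservation using the rigidity from Definition~\ref{def:grpact}\eqref{item:grpact2} exactly as the paper does. Your explicit functoriality check, $\lambda(f_2\circ f_1)=\sigma(f_1)^{-1}\lambda(f_2)\circ\lambda(f_1)$ via the cocycle element and uniqueness of lifts, is a correct elaboration of what the paper dismisses as ``some simple computations.''
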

    \begin{proof}
    Let $\lambda:\Q_\text{ob}\to\C_\text{ob}$ and $\set{\sigma(f) \mid f:x \to y \text{ in }\Q}$ be any choice of lift function and associated transfer elements for the $\G$-action on $\C$ that produce $\F{}$ and $\Phi$. We define a functor $\h: \D(\F{},\Phi)\to \C$ as follows: it sends each object $(x,\cset_x(g))$ in $\D$ to $g\lambda(x)$ in $\C$. The surjectivity of $\h$ on objects is clear; injectivity follows easily from the fact that $g\lambda(x)=h\lambda(x)$ if and only if $h^{-1}g$ is in $\Phi_x(\F{x})$, in which case $c_x(g)=c_x(h)$. 
    
    Given any morphism $f:x\to y$ in $\Q$, the transfer element $\tau_\Phi(f)$ satisfies that $$\lambda(f):\lambda(x)\to \tau_\Phi(f)^{-1}\lambda(y)$$ is the unique morphism in $\C$ with source $\lambda(x)$ such that $\p(\lambda(f))=f$. We set the $\h$-image of the morphism $(f,\cset_x(g))$ to be $g\lambda(f)$. 
    
    We check that $\h$ is order-preserving, i.e., that $$(f,c_x(g))\two (f^\prime,c_x(g))\ \text{if and only if}\ g\lambda(f)\two g\lambda(f^\prime).$$ If $(f,\cset_x(g))\two (f^\prime,\cset_x(g))$ in $\D$, then $f\two f^\prime$ in $\Q$ and $\tau_\Phi(f)\cdot\tau_\Phi(f^\prime)^{-1}\in\Phi_y(\F{y})$. By definition of the quotient, $f\two f^\prime$ in $\Q$ if and only if there exists $k\in\G$ such that $k\lambda(f)\two \lambda(f^\prime)$. But $\lambda(f)$ and $\lambda(f^\prime)$ have the same source and target, and there is a unique morphism in the orbit of $\lambda(f)$ with this source. Hence, $k=\text{id}_{\G}$ and $g\lambda(f)\two g\lambda(f^\prime)$. The other direction follows from the fact that, if $g\lambda(f)\two g\lambda(f^\prime)$ in $\C$, then they must have the same target and so $\tau_\Phi(f)\cdot\tau_\Phi(f^\prime)^{-1}$ is in $\Phi_y(\F{y})$.
    
     The fact that $\h$ is bijective on hom-sets follows from an argument similar to the one above, and some simple computations. The fact that $\h$ is $\G$-equivariant follows almost directly from the definition.
    \end{proof}
    
    We introduce a technical lemma relating the development of complexes of groups over isomorphic LP categories, that will be used in Section~\ref{sec:equhomequiv}.
    
    \begin{lemma}\label{lem:cogcommute}
    Suppose that $\E:X\to X'$ is an isomorphism of LP categories, and that ${\F{}:X\to\grp}$ and ${\F{}':X'\to\grp}$ are complexes of group such that the following diagram commutes:
     \[
    \begin{tikzcd}[row sep=huge, column sep = large]
    X \arrow[r, "\mathbf{E}"] \arrow[d, "\mathbf{F}"'] & X^\prime \arrow[ld, "\mathbf{F}^\prime"] \\
    \grp                                          &                                         
    \end{tikzcd}
    \]
    If $\Phi:\F{}\to\cst_{X}$ and $\Phi':\F'{}\to\cst_{X'}$ are morphisms of complexes of groups such that $\Phi_f = \Phi'_{\E f}$ for every morphism $f$ in $X$, then there is a $\G$-equivariant isomorphism of LP categories $${\D(\F{},\Phi)\cong_{\G}\D(\F'{},\Phi')}.$$
    \end{lemma}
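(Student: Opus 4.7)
The plan is to transport the development of $(\F{},\Phi)$ along the LP-isomorphism $\E$ one piece of structure at a time, and verify that the resulting assignment is a $\G$-equivariant isomorphism of LP-categories onto the development of $(\F'{},\Phi')$. Throughout, the hypothesis $\F'{}\circ\E=\F{}$ furnishes equalities of the form $\F{x}=\F'{\E x}$ on objects and $\F{f}=\F'{\E f}$ on morphisms, while the hypothesis $\tau_\Phi(f)=\tau_{\Phi'}(\E f)$ on twisting elements (and $\Phi_x=\Phi'_{\E x}$ on object-components) ensures that the coset data used to build $\D$ matches on the nose across $\E$.

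First I would define the candidate functor $\Theta:\D(\F{},\Phi)\to\D(\F'{},\Phi')$ by
\[
(x,\cset_x(g))\;\longmapsto\;(\E x,\cset_{\E x}(g))\qquad\text{and}\qquad (f,\cset_x(g))\;\longmapsto\;(\E f,\cset_{\E x}(g)).
\]
Well-definedness on objects is automatic once one observes that $\Phi_x(\F{x})=\Phi'_{\E x}(\F'{\E x})$ as subgroups of $\G$, so the cosets $\cset_x(g)$ and $\cset_{\E x}(g)$ coincide as subsets of $\G$. On morphisms, the defining condition $\cset_y(h)=\cset_y(g\cdot\tau_\Phi(f)^{-1})$ in $\D(\F{},\Phi)$ transfers verbatim to $\cset_{\E y}(h)=\cset_{\E y}(g\cdot\tau_{\Phi'}(\E f)^{-1})$ in $\D(\F'{},\Phi')$ by the hypothesis that the twisting elements agree. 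Functoriality, i.e.~$\Theta((f_2,\cset_y(g_2))\circ(f_1,\cset_x(g_1)))=\Theta(f_2,\cset_y(g_2))\circ\Theta(f_1,\cset_x(g_1))$, reduces to the identity $\E(f_2\circ f_1)=\E f_2\circ\E f_1$, which holds because $\E$ is a functor.

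Next I would check that $\Theta$ is order-preserving. Recall that $(f,\cset_x(g))\two(f',\cset_x(g))$ in $\D(\F{},\Phi)$ precisely when $f\two f'$ in $X$ and $\tau_\Phi(f)\cdot\tau_\Phi(f')^{-1}\in\Phi_y(\F{y})$. Since $\E$ is an LP-functor we have $\E f\two \E f'$ in $X'$, and the condition on twisting elements transfers by the matching assumption, giving the analogous order relation in $\D(\F'{},\Phi')$; the converse uses the fact that $\E$ is an isomorphism of LP-categories (hence its inverse is also order-preserving). Equivariance with respect to the $\G$-actions is immediate: both actions multiply coset representatives on the left, and $\Theta$ does not disturb the $g$ argument.

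Finally, to exhibit the inverse I would apply the same construction to $\E^{-1}$, with roles of $(\F{},\Phi)$ and $(\F'{},\Phi')$ swapped; the hypotheses of the lemma are symmetric under $\E\leftrightarrow\E^{-1}$ (in particular $\F{}\circ\E^{-1}=\F'{}$ and $\tau_{\Phi'}(f')=\tau_\Phi(\E^{-1}f')$), so this construction produces a functor $\Theta':\D(\F'{},\Phi')\to\D(\F{},\Phi)$ which is visibly two-sided inverse to $\Theta$ on objects and morphisms. The only mildly subtle point in the whole argument is keeping track of which subgroup of $\G$ the cosets are taken modulo; beyond this, everything is a direct consequence of having transported the pseudofunctor and the monomorphism along an LP-isomorphism without altering any of the group-theoretic data.
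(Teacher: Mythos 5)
Your proposal is correct and follows essentially the same route as the paper's proof: transporting objects, morphisms, and partial orders of the development along $\E$, using $\F{x}=\F'{\E x}$ to identify the cosets and the hypothesis on twisting elements to match the morphism conditions and order relations. You merely package the argument as an explicit functor $\Theta$ with inverse built from $\E^{-1}$, which the paper leaves implicit, and both arguments rely in the same way on the (implicit) agreement of the images $\Phi_x(\F{x})=\Phi'_{\E x}(\F'{\E x})$.
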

    \begin{proof}
    First note that by the commutativity of the diagram, the groups $\F{x}$ and $\F'{\E x}$ are equal for every $x\in X$; hence, the cosets $g\cdot\Phi(\F{x})$ and $g\cdot\Phi'_{\E x}(\F'{\E x})$ are equal for any group element $g$, and so the indices $[\G:\Phi_x(\F{x})]$ and $[\G:\Phi'_{\E x}(\F'{\E x})]$ are equal.
    
    Each object $x\in X$ generates the set $\{(x,g\cdot\Phi_x(\F{x}))\ |\ \forall g\in\G \}$ of objects in $\D(\F{},\Phi)$. This is clearly in bijection with the set of objects ${\{(\E x,g\cdot\Phi'_{\E x}(\F'{\E x}))\ |\ \forall g\in\G \}}$ in $\D(\F'{},\Phi')$, since $\E$ is an isomorphism. 
    
    The poset of morphisms $$(x, g\cdot\Phi_x(\F{x}))\to (y, g\cdot\Phi_f^{-1}\Phi_y(\F{y}))$$ in $\D(\F{},\Phi)$ is isomorphic to the poset of morphisms $$(\E x, g\cdot\Phi'_{\E x}(\F'{\E x}))\to (\E y, g\cdot(\Phi'_{\E f})^{-1}\Phi'_{\E y}(\F'{\E y}))$$ in $\D(\F'{},\Phi')$ by the arguments above, in addition to the fact that we assume that $\Phi_f=\Phi'_{\E f}$.
    
    The correspondence between the partial order on morphisms in $\D(\F{},\Phi)$ and $\D(\F'{},\Phi')$ follows from the fact that $f\two f'$ in $X$ if and only if $\E{f}\two\E{f'}$ in $X'$, and $\Phi_f=\Phi'_{\E f}$ for any morphism $f\in X$.
    \end{proof}

	\section{Discrete Morse Theory and the Flow Category}\label{sec:flow}
	
	Let $X$ be a simplicial complex; we will write $x > x'$ to indicate that the simplex $x'$ is a face of the simplex $x$.
	
		\begin{defn}\label{def:acycmatch}
		A set $\Sigma = \set{(x_\bullet > x'_\bullet)}$ of simplex-pairs in $X$ is called an {\bf acyclic partial matching} if it satisfies three axioms:
		\begin{enumerate}
			\item {\bf dimension:} if $(x > x') \in \Sigma$, then $\dim x - \dim x' = 1$;
			\item {\bf partition:} if $(x > x') \in \Sigma$, then neither $x$ nor $x'$ lies in any other pair of $\Sigma$; and,
			\item {\bf acyclicity:} the transitive closure of the binary relation
			\[
			(x_0 > x_0') \btrt (x_1 > x_1') \text{ whenever } x_0 > x_1' \text{ in } X
			\]
			generates a partial order on $\Sigma$.
		\end{enumerate}
	\end{defn}
	
	Acyclic partial matchings are analogous to gradient-like vector fields on simplicial complexes; as such, they play a central role in Forman's {\em discrete Morse theory} \cite{forman1998morse, chari00discrete}. The simplices of $X$ which do not appear in any $\Sigma$-pair are called {\bf critical}, and the gradient paths from a critical simplex $w$ to a critical simplex $z$ are furnished by zigzags of $\btrt$-descending $\Sigma$ elements, e.g.,
	\[
	\zeta := (w > x'_0 < x_0 > x'_1 < x_1 > \cdots > x_k' < x_k > z). 
	\]
	Here each backward-pointing $x'_i < x_i$ corresponds to an element $(x_i > x'_i)$ in $\Sigma$, and the fact that $x'_{i+1}$ is a face of $x_i$ ensures $(x_i > x'_i) \btrt (x_{i+1} > x'_{i+1})$. An inductive argument establishes that $X$ is homotopy equivalent to a CW complex whose cells correspond bijectively with the critical simplices of $\Sigma$ --- see for instance \cite[Corollary 3.5]{forman1998morse}. To obtain a more explicit description of this homotopy equivalence in terms of $\Sigma$-zigzags (such as $\zeta$), we adopt the perspective of \cite{NANDA2019459}. The basic idea is to construct an LP-category whose objects are critical simplices, whose morphisms $w \to z$ consist of $\Sigma$-zigzags, and whose classifying space is homotopy equivalent to $X$.
		
	 Consider two simplices $x, x'$ in $X$. The set of {\em entrance paths} of $X$ from $x$ to $x'$, denoted $\ent{X}(x,x')$, consists of all strictly descending sequences of faces from $x$ to $x'$. Thus, each $f \in \ent{X}(x,x')$ has the form
	\[
	f = (x > x_1 > \cdots > x_k > x')
	\]
	for some length $k \geq 0$ where every $x_i$ in sight is a simplex of $X$. These entrance paths are partially ordered by refinement, so we have $f' \two f$ whenever $f'$ is obtained by removing some of the intermediate $x_i$ from $f$. In particular, we note that the only entrance path from each simplex $x$ to itself is the trivial path $(x)$ of length zero. 
	
	\begin{defn} The {\bf entrance path category} of $X$ is the LP-category $\ent{X}$ prescribed by the following data:
		\begin{enumerate}
			\item its objects are the simplices of $X$, and
			\item the poset of morphisms $x \to x'$ is $\ent{X}(x,x')$.
		\end{enumerate}
		The composite of $f := (x > x_0 > \cdots > x_k > x')$ and $f' := (x' > x_0' > \cdots > x_\ell > x'')$ is the concatenated path
		\[
		f' \circ f := (x > x_0 > \ldots > x_k > x' > x_0' > \cdots > x_\ell > x''),
		\]
		which evidently lies in the poset $\ent{X}(x,x'')$.
		For each cell $x$ of $X$, the singleton $(x)$ serves as the identity morphism, which is the unique element in the poset $\ent{X}(x,x)$
	\end{defn}
	
	The classifying space of $\ent{X}$ is homotopy equivalent to $X$ (see \cite[Proposition 3.3]{NANDA2019459}), and every acyclic partial matching $\Sigma = \set{(x_\bullet > x'_\bullet)}$ on $X$ corresponds to a collection of minimal entrance paths $\set{f_\bullet:x_\bullet \to x'_\bullet}$.
	
	\begin{defn}\label{def:catloc}
		The {\bf localisation} of $\ent{X}$ about an acyclic partial matching $\Sigma$ is the LP-category $\loc{X}$ whose objects are also the cells of $X$, while the 1-morphisms are equivalence classes of finite (but arbitrarily long) {\bf $\Sigma$-zigzags} $\gamma:w \to z$ in $\ent{X}$; each such zigzag has the form
		\[
		\xymatrixcolsep{.45in}
		\xymatrix{
		w \ar@{->}[r]^{h_0} & x'_0  & x_0  \ar@{->}[l]_{f_0} \ar@{->}[r]^{h_1} & x'_1 & \cdots \ar@{->}[l]_{f_1} \ar@{->}[r]^{h_k} & x'_k & x_k \ar@{->}[l]_{f_k} \ar@{->}[r]^{h_{k+1}} & z.
		}
		\]
		Here each backward-pointing $f_i$ is required to either lie in $\Sigma$ or be an identity, whereas the forward-pointing $h_i$ are unconstrained entrance paths. (The equivalence relation between such zigzags is defined in Remark \ref{rem:eqrel} below.) The order relation $\gamma \two \gamma'$ holds if there exist zigzag representatives which fit into an $\ent{X}$-diagram of the form
		\[
		\xymatrixcolsep{.45in}
		\xymatrixrowsep{.35in}
		\xymatrix{
		w \ar@{=}[d] \ar@{->}[r]^{h_0} & x'_0 \ar@{->}[d]_{\phi_0} & x_0 \ar@{->}[d]_{\phi_0} \ar@{->}[l]_{f_0} \ar@{->}[r]^{h_1} & x'_1 \ar@{->}[d]_{\phi'_1} & \cdots \ar@{->}[l]_{f_1} \ar@{->}[r]^{h_k} & x'_k \ar@{->}[d]_{\phi'_k} & x_k \ar@{->}[d]_{\phi_k} \ar@{->}[l]_{f_k} \ar@{->}[r]^{h_{k+1}} & z \ar@{=}[d] \\
		w \ar@{=>}[ur] \ar@{->}[r]_{h'_0} & y'_0 & y_0  \ar@{=>}[ul] \ar@{=>}[ur] \ar@{->}[l]^{f'_0} \ar@{->}[r]_{h'_1} & y'_1 & \cdots \ar@{=>}[ur]  \ar@{=>}[ul] \ar@{->}[l]^{f'_1} \ar@{->}[r]_{h'_k} & y'_k & y_k w \ar@{=>}[ul] \ar@{=>}[ur] \ar@{->}[l]^{f'_k} \ar@{->}[r]_{h'_{k+1}} & z 
		}
		\]
		Here all the vertical entrance paths $\phi_i, \phi_i'$ are also constrained to lie in $\Sigma \cup \set{\text{identities}}$, and all the diagonal double-arrows are to be interpreted as order relations in $\ent{X}$.
	\end{defn}
	
	\begin{rem}\label{rem:eqrel}
	   The equivalence relation among $\Sigma$-zigzags in the preceding definition is generated by (the transitive closure of) two elementary relations:
	   \begin{enumerate}
	       \item two zigzags are {\em horizontally} equivalent if they only differ by (backward or forward pointing) identity morphisms; and
	       \item two zigzags are {\em vertically} equivalent if they form the top and bottom rows of an $\ent{X}$-diagram
	       \[
		\xymatrixcolsep{.45in}
		\xymatrixrowsep{.35in}
		\xymatrix{
		w \ar@{=}[d] \ar@{->}[r]^{h_0} & x'_0 \ar@{->}[d]_{\phi_0} & x_0 \ar@{->}[d]_{\phi_0} \ar@{->}[l]_{f_0} \ar@{->}[r]^{h_1} & x'_1 \ar@{->}[d]_{\phi'_1} & \cdots \ar@{->}[l]_{f_1} \ar@{->}[r]^{h_k} & x'_k \ar@{->}[d]_{\phi'_k} & x_k \ar@{->}[d]_{\phi_k} \ar@{->}[l]_{f_k} \ar@{->}[r]^{h_{k+1}} & z \ar@{=}[d] \\
		w \ar@{=}[ur] \ar@{->}[r]_{h'_0} & y'_0 & y_0  \ar@{=}[ul] \ar@{=}[ur] \ar@{->}[l]^{f'_0} \ar@{->}[r]_{h'_1} & y'_1 & \cdots \ar@{=}[ur]  \ar@{=}[ul] \ar@{->}[l]^{f'_1} \ar@{->}[r]_{h'_k} & y'_k & y_k w \ar@{=}[ul] \ar@{=}[ur] \ar@{->}[l]^{f'_k} \ar@{->}[r]_{h'_{k+1}} & z 
		}
		\]
		(Note that all order relations are identities, so every square in sight commutes).
	   \end{enumerate}
	\end{rem}
	
	There is a localisation LP-functor
	\[
	\LL_\Sigma: \ent{X} \to \loc{X},
	\] 
	which sends each object to itself and each entrance path to its own equivalence class of $\Sigma$-zigzags. The subcategory of $\loc{X}$ spanned by all the critical simplices of $\Sigma$ is called the {\bf flow category} of $\Sigma$ and denoted $\flo{X}$. The flow category is naturally embedded into $\loc{X}$ via an inclusion LP-functor
	\[
	\mathbf{J}_\Sigma:\flo{X} \inj \loc{X}.
	\]
	Here is the main result of \cite{NANDA2019459}.

	\begin{thm} \label{thm:flocat}
		If $\Sigma$ is any acyclic partial matching on a simplicial complex $X$, then both LP-functors
		\[
		\xymatrixcolsep{.45in}
		\xymatrix{
		\ent{X} \ar@{->}[r]^-{\LL_\Sigma} & \loc{X} & \flo{X} \ar@{_{(}->}[l]_-{\mathbf{J}_\Sigma}
		}
		\]
		induce homotopy equivalence of classifying spaces. 
	\end{thm}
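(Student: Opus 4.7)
The plan is to apply a 2-categorical version of Quillen's Theorem A: an LP-functor $F:\C \to \D$ induces a homotopy equivalence of classifying spaces provided each comma category $F \downarrow d$ has contractible classifying space for every object $d$ in $\D$. I would treat the two LP-functors separately, starting with the inclusion of the flow category since it is a full subcategory and therefore more tractable.

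For $\mathbf{J}_\Sigma:\flo{X} \inj \loc{X}$, fix a simplex $x$ of $X$ and consider $\mathbf{J}_\Sigma \downarrow x$, whose objects are pairs $(z,\gamma)$ with $z$ critical and $\gamma$ a $\Sigma$-zigzag from $z$ to $x$. If $x$ itself is critical, then $(x,(x))$ is a terminal object and contractibility is immediate. Otherwise $x$ appears in some pair of $\Sigma$, and the corresponding two-step zigzag exhibits $x$ as isomorphic (in $\loc{X}$) to its $\Sigma$-partner. I would induct on the partial order generated by the acyclicity axiom: post-composition with this isomorphism induces an equivalence of comma categories between $x$ and its partner, and the latter lies strictly lower in the $\Sigma$-order, eventually reducing to the critical case.

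For $\LL_\Sigma:\ent{X} \to \loc{X}$, I would factor the equivalence through $\flo{X}$. Building a canonical \emph{critical-replacement} LP-functor $\iota:\ent{X} \to \flo{X}$ via the $\Sigma$-zigzags obtained by iteratively resolving each non-critical simplex along $\Sigma$, one can arrange that $\mathbf{J}_\Sigma \circ \iota$ agrees with $\LL_\Sigma$ up to a zigzag of natural 2-morphisms in $\loc{X}$. Combined with the previous step and the classical identification $|\ent{X}| \simeq X$ via the order complex of the face poset, this yields the desired homotopy equivalence. Alternatively, one could directly verify contractibility of each $\LL_\Sigma \downarrow x$ using the fact that a $\Sigma$-zigzag terminating at $x$ admits a canonical ``minimal'' refinement, providing a deformation retract.

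The main obstacle will be the careful combinatorial handling of the equivalence relation on $\Sigma$-zigzags --- both the horizontal and vertical reductions of Remark \ref{rem:eqrel} --- together with its interaction with the poset enrichment on hom-sets. Contracting the comma categories demands explicit homotopies that respect the refinement order on entrance paths, and acyclicity of $\Sigma$ is precisely the hypothesis that rules out the cyclic reduction patterns which would obstruct such contractions. The most delicate part is confirming that all 2-categorical coherences survive these reductions, since the geometric nerve incorporates the full order structure on morphisms and thus detects subtleties that are invisible at the level of the underlying 1-category.
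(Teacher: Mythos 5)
First, note that the paper does not prove Theorem \ref{thm:flocat} at all: it is quoted as the main result of \cite{NANDA2019459}, so your attempt must be measured against the argument given there (which does indeed run through a 2-categorical Theorem A in the style you suggest, applied to lax slice categories). At that level your opening move is reasonable, but your sketch has a genuine gap exactly where the real work lies. For $\mathbf{J}_\Sigma$, your induction for a non-critical simplex $x$ does not function: the $\Sigma$-partner of a non-critical simplex is the other member of the \emph{same} matched pair, hence is itself non-critical, and the acyclicity order $\btrt$ is an order on pairs of $\Sigma$, not on simplices, so the partner is not ``strictly lower'' in any relevant sense. Post-composing with the isomorphism $x \cong \bar{x}$ in $\loc{X}$ merely exchanges two equivalent comma categories over the two halves of one pair; it never reduces to the critical case, and the induction has no base to reach. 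Proving contractibility of the (lax) comma object over a non-critical simplex is precisely where acyclicity must be used substantively --- in \cite{NANDA2019459} this is done by a careful analysis of the zigzags ending at $x$, in effect localising one matched pair at a time in an order compatible with $\btrt$, rather than by the partner-swapping argument you propose.

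Your treatment of $\LL_\Sigma$ has a second construction gap: there is in general no ``critical-replacement'' LP-functor $\iota:\ent{X}\to\flo{X}$ obtained by iteratively resolving non-critical simplices. Resolving a simplex requires inverting $\Sigma$-morphisms, which exist only in $\loc{X}$, and even allowing zigzag-valued assignments one must verify compatibility with composition of entrance paths and with the poset enrichment; none of this is supplied, and the absence of such a functor is exactly why the theorem is stated as two separate equivalences $\LL_\Sigma$ and $\mathbf{J}_\Sigma$ rather than as a single comparison $\ent{X}\to\flo{X}$. Likewise the claimed ``canonical minimal refinement'' giving a deformation retract of $\LL_\Sigma \downarrow x$ is asserted without justification. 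You correctly flag that the equivalence relation of Remark \ref{rem:eqrel} and the lax 2-categorical coherences are delicate, but flagging them is not the same as resolving them: as written, the proposal establishes the easy case ($x$ critical, terminal object in the slice) and leaves the essential non-critical case and the $\LL_\Sigma$ half unproved.
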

\noindent As an immediate consequence, the classifying space $|\Delta\flo{X}|$ is homotopy equivalent to $X$.
	
	\section{The Morse Complex of Groups}\label{sec:morsecog}
	
	In this section, we construct what we call the \emph{Morse complex of groups}: a complex of groups over the flow category $\flo{Y}$ associated to a certain class of acyclic partial matchings defined on a simplicial complex $Y$. The first step in this direction is to make an acyclic partial matching $\Sigma$ compatible with a complex of groups over $Y$
	
	\begin{defn}\label{def:fcompatible}
	    An acyclic partial matching $\Sigma$ on a simplicial complex $Y$ is \textbf{compatible} with a complex of groups $\F:\fac{Y}\to\grp$ if it satisfies the following conditions:
	    \begin{enumerate}
	        \item for every morphism $f:y\to y'$ in $\Sigma$, its $\mathbf{F}$-image $\F{f}$ is an isomorphism, and 
	        \item whenever we have 
	            \[
	            \begin{tikzcd}[row sep = small, column sep = small]
                w \arrow[rr, "g"] \arrow[rrdd, "g'"'] &               & y \arrow[dd, "f"] &  &           &  & y \arrow[dd, "f"'] \arrow[rrdd, "h'"] &    &   \\
                                                            & {} \arrow[ru, equals, shorten=2mm] &                   &  & \text{or} &  &                                             & {} &   \\
                                                            &               & y'                 &  &           &  & y' \arrow[rr, "h"'] \arrow[ru, equals, shorten=1mm]               &    & z
                \end{tikzcd}
                \]
                for arbitrary morphisms $g,g',h, h'$, the $\mathbf{F}$-images are, respectively,
                \[ 
                \begin{tikzcd}[row sep = small, column sep = small]
                \F{w} \arrow[rr, "\F{g}"] \arrow[rrdd, "\F{g'}"'] &               & \F{y} \arrow[dd, "\F{f}"] &  &           &  & \F{y} \arrow[dd, "\F{f}"'] \arrow[rrdd, "\F{h'}"] &    &   \\
                                                            & {} \arrow[ru, equals, shorten=2mm] &                   &  & \text{and} &  &                                             & {} &   \\
                                                            &               & \F{y'}                 &  &           &  & \F{y'} \arrow[rr, "\F {h}"'] \arrow[ru, equals, shorten=1mm]               &    & \F{z}
                \end{tikzcd}
                \]
	    \end{enumerate}
	\end{defn}

	Throughout this section, $\F{}$ denotes the complex of groups associated to a regular $\G$-action on $X$ (for some choice of lifts and transfers), $\Phi:\F{}\to\cst_Y$ denotes the associated monomorphism, and $\Sigma$ is an $\F{}$-compatible acyclic partial matching on the quotient $Y$.

	\begin{defn}\label{def:mcog}
	 The {\bf Morse complex of groups} associated to $\Sigma$ is the pseudofunctor $\M{}:\flo{Y}\to\grp$ prescribed by the following data:
	\begin{enumerate}
    \item To each object $y\in\flo{Y}$, the group $\M{y} \coloneqq \F{y}$.
    \item For each equivalence class of 1-morphisms represented by some zigzag $\zeta$ \[ 
        \begin{tikzcd}
             w \arrow[r, "h_0"] & y_0 & x_0 \arrow[l, "f_0"'] \arrow[r, "h_1"] & y_1 & \cdots \arrow[l, "f_1"'] & x_k \arrow[l, "f_k"'] \arrow[r, "h_{k+1}"] & z
        \end{tikzcd}
        \]
        the homomorphism $\M{\zeta} : \M{w}\to\M{z}$ is the composite  $$\F{h_{k+1}}\circ\F{f_k}^{-1}\circ\cdots\circ\F{h_1}\circ\F{f_0}^{-1}\circ\F{h_0},$$
        i.e., $\M{\zeta}$ is conjugation by the product of transfer elements $$\sigma(\zeta)\coloneqq \sigma(h_{k+1})\cdot \sigma(f_k)^{-1}\cdots \sigma(h_1)\cdot \sigma(f_0)^{-1}\cdot \sigma(h_0).$$
        
        \item For any two morphisms $\zeta,\zeta^\prime:w\to z$ in $\flo{Y}$ satisfying $\zeta^\prime\Rightarrow\zeta$, fitting into a diagram
        \begin{equation}\label{diag:2morphisms}
        \begin{tikzcd}
            w \arrow[r, "h_0"] \arrow[d]    & y_0 \arrow[d, "u_0"] & x_0 \arrow[l, "f_0"'] \arrow[r, "h_1"] \arrow[d, "v_0"]         & y_1 \arrow[d, "u_1"] & \cdots \arrow[l, "f_1"']            & x_k \arrow[l, "f_k"'] \arrow[r, "h_{k+1}"] \arrow[d, "v_k"]         & z \arrow[d] \\
            w \arrow[r, "h'_0"'] \arrow[ru, Rightarrow] & y'_0                 & x'_0 \arrow[l, "f'_0"] \arrow[r, "h'_1"'] \arrow[ru, Rightarrow] \arrow[lu, Rightarrow] & y'_1                 & \cdots \arrow[l, "f'_1"] \arrow[lu, Rightarrow] & x'_k \arrow[l, "f'_k"] \arrow[r, "h'_{k+1}"'] \arrow[lu, Rightarrow] \arrow[ru, Rightarrow] & z          
        \end{tikzcd}
        \end{equation}
        where $\zeta,\zeta^\prime$ form the top and bottom rows respectively, ${\M{(\zeta^\prime\Rightarrow\zeta)}:\M{\zeta^\prime}\Rightarrow\M\zeta}$ is the product $\sigma(\zeta^\prime)\cdot\sigma(\zeta)^{-1}.$
        \item For any composable morphisms $y\xrightarrow{\zeta}y'\xrightarrow{\zeta^\prime}y''$, the twisting element is $$\gamma_{\M{}}(\zeta,\zeta')=\sigma(\zeta')\cdot\sigma(\zeta)\cdot\sigma(\zeta'\circ\zeta)^{-1}.$$
\end{enumerate}
\end{defn}
	 
	 Whilst the complex of groups in Definition~\ref{def:actcog} summarises a group action, the Morse version tracks how this layer of data changes after applying discrete Morse theory to the quotient. The associated monomorphism $\Psi:\M{}\to\cst_{\flo{Y}}$ assigns
\begin{enumerate}
    \item to each object $y\in\flo{Y}$ the inclusion $\M{y}\hookrightarrow\G$, and
    \item to each $\zeta:y\to y'$ the twisting element $\tau_{\Psi}(\zeta)\coloneqq \sigma(\zeta)$.
\end{enumerate}

\begin{prop}\label{prop:morsecogwd}
$\M{}$ is a well-defined complex of groups.
\end{prop}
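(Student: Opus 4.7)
My plan is to prove Proposition~\ref{prop:morsecogwd} in two stages: first, establish that the assignments in Definition~\ref{def:mcog} descend to equivalence classes of $\Sigma$-zigzags; second, verify the axioms of Definition~\ref{def:cog}. Throughout, the proof parallels Lemmas~\ref{lem:cancogfunctor} and~\ref{lem:cancogtwisting}, but with the added complication of the zigzag equivalence relation from Remark~\ref{rem:eqrel}.

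For well-definedness on equivalence classes, horizontal equivalence is immediate: inserting or removing identity morphisms in a zigzag only contributes factors of $1_\G$ to the transfer product $\sigma(\zeta)$. Vertical equivalence is the step where $\F{}$-compatibility enters crucially. Each local triangle or square in the diagram exhibiting vertical equivalence has precisely the shape specified in Definition~\ref{def:fcompatible}(2); applying $\F{}$ therefore produces a commuting diagram in $\grp$. Chasing this diagram shows that the composites $\F{h_{k+1}} \circ \F{f_k}^{-1} \circ \cdots \circ \F{f_0}^{-1} \circ \F{h_0}$ along the top and bottom rows agree as homomorphisms, which amounts to saying that conjugation by $\sigma(\zeta)$ and by $\sigma(\zeta')$ agree on $\F{y}$; hence $\M{\zeta} = \M{\zeta'}$. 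An analogous diagram chase handles the 2-morphism assignment $\sigma(\zeta' \two \zeta) = \sigma(\zeta') \cdot \sigma(\zeta)^{-1}$.

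Next, I verify the axioms of Definition~\ref{def:cog}. Showing that each $\M{\zeta}$ is an injective homomorphism $\F{y} \to \F{y'}$ proceeds by induction on the length of the zigzag, using that each $\F{h_i}$ is injective with image in the next stabilizer, and that each $\F{f_i}$ is a group isomorphism by $\F{}$-compatibility (so $\F{f_i}^{-1}$ makes sense and is itself injective). Functoriality of the poset map $\M{}_{yy'}:\flo{Y}(y,y') \to \grp(\M{y},\M{y'})$ then amounts to checking that $\sigma(\zeta' \two \zeta) = \sigma(\zeta') \cdot \sigma(\zeta)^{-1}$ realizes the natural transformation $\M{\zeta'} \two \M{\zeta}$, respects identity order relations, and composes correctly along chains $\zeta \two \zeta' \two \zeta''$; these computations telescope exactly as in Lemma~\ref{lem:cancogfunctor}.

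Finally, the cocycle condition for the twisting elements $\gamma_{\M{}}(\zeta_1,\zeta_2) = \sigma(\zeta_2) \cdot \sigma(\zeta_1) \cdot \sigma(\zeta_2 \circ \zeta_1)^{-1}$ reduces, as in Lemma~\ref{lem:cancogtwisting}, to the observation that for a composable triple $\zeta_1,\zeta_2,\zeta_3$ both sides of the cocycle relation collapse to $\sigma(\zeta_3) \cdot \sigma(\zeta_2) \cdot \sigma(\zeta_1) \cdot \sigma(\zeta_3 \circ \zeta_2 \circ \zeta_1)^{-1}$. I expect the main obstacle to be the vertical-equivalence step, which is the only place where $\F{}$-compatibility is used in an essential way; once that is clear, the remaining verifications follow the canonical pattern of Section~\ref{sec:grpact}.
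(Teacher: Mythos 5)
Your proposal is correct and follows essentially the same route as the paper's proof: horizontal equivalence is handled by the triviality of transfers on identities, vertical equivalence by applying both parts of Definition~\ref{def:fcompatible} to the columns of the ladder diagram and telescoping the resulting relations, injectivity as a finite composite of injections and isomorphisms, and the 2-morphism and cocycle verifications by the same telescoping products of transfer elements used in Lemmas~\ref{lem:cancogfunctor} and~\ref{lem:cancogtwisting}. The only differences are cosmetic (your induction on zigzag length and explicit functoriality check versus the paper's terser phrasing), so no substantive gap remains.
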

\begin{proof}
This follows from a sequence of claims:
\begin{enumerate}
    \item \emph{Claim:} The 1-morphisms are well-defined, that is, for two zigzags $\zeta\sim\zeta^\prime$, we have $\M{\zeta} \equiv\M{\zeta^\prime}$, and every $\M{\zeta}$ is injective. \\
    \emph{Proof:} If $\zeta$ and $\zeta^\prime$ are horizontally equivalent then clearly $\M{\zeta}\equiv\M{\zeta^\prime}$ because $\F{\text{id}_y} = \text{id}_{\F{y}}$ for all objects $y\in \fac{Y}$. If $\zeta$ and $\zeta^\prime$ are vertically equivalent, then they fit into a diagram as follows.
    \[ 
    \begin{tikzcd}
        w \arrow[r, "g_0"] \arrow[d]    & y_0 \arrow[d, "u_0"] & x_0 \arrow[l, "f_0"'] \arrow[r, "g_1"] \arrow[d, "v_0"]         & y_1 \arrow[d, "u_1"] & \cdots \arrow[l, "f_1"']            & x_k \arrow[l, "f_k"'] \arrow[r, "g_{k+1}"] \arrow[d, "v_k"]         & z \arrow[d] \\
        w \arrow[r, "g'_0"'] \arrow[ru, equals] & y'_0                 & x'_0 \arrow[l, "f'_0"] \arrow[r, "g'_1"'] \arrow[ru, equals] \arrow[lu, equals] & y'_1                 & \cdots \arrow[l, "f'_1"] \arrow[lu, equals] & x'_k \arrow[l, "f'_k"] \arrow[r, "g'_{k+1}"'] \arrow[lu, equals] \arrow[ru, equals] & z          
    \end{tikzcd}
    \]
    By the second compatibility criterion for $\Sigma$ (Definition~\ref{def:fcompatible}), for each backward-pointing column we have $$\F{u_i}\circ\F{f_i} = \F{f_i^\prime\circ v_i} = \F{f_i^\prime}\circ\F{v_i},$$ and similarly, for each forward-pointing column we have $$\F{u_i}\circ\F{h_i}=\F{h_i^\prime}\circ\F{v_{i-1}},$$ with the convention that $v_{-1}=\text{id}_z$ and $u_{k+1}=\text{id}_w$. These equations yield the relations
    \begin{align*}
        \F{f_i}^{-1} &= \F{v_i}^{-1}\circ\F{f_i^\prime}^{-1}\circ\F{u_i} \\
        \F{h_i} &= \F{u_i}^{-1}\circ\F{h_i^\prime}\circ\F{v_{i-1}},
    \end{align*}
    for all suitable $i$, and are well-defined since $f_i,u_i,v_i$ are either identities or are in $\Sigma$,  so their $\F{}$-images are isomorphisms by the first compatibility criterion. Substituting into $\M{\zeta}$, we have that
    \begin{align*}
        \M{\zeta} &= \F{h_{k+1}}\circ\F{f_k}^{-1}\circ\cdots\circ\F{h_1}\circ\F{f_0}^{-1}\circ\F{h_0} \\
        &= \left(\F{h_{k+1}^\prime}\circ\F{v_k} \right)\circ\cdots\circ\left(\F{v_0}^{-1}\circ\F{f_0^\prime}^{-1}\circ\F{u_0} \right)\circ\left(\F{u_0}^{-1}\circ\F{h_0^\prime} \right) \\
        &= \F{h_{k+1}^\prime}\circ\F{f_k^\prime}^{-1}\circ\cdots\circ\F{h_1^\prime}\circ\F{f_0^\prime}^{-1}\circ\F{h_0^\prime} \\
        &= \M{\zeta^\prime},
    \end{align*}
    as required. By construction, each $\M\zeta$ is the composite of finitely many injective homomorphisms, and is therefore injective. \\
    
    \item \emph{Claim:} The 2-morphism ${\M{(\zeta^\prime\Rightarrow\zeta)}}$ satisfies $$\M{\zeta^\prime}(k) = \M{(\zeta^\prime\Rightarrow\zeta)}\cdot\M{\zeta}(k)\cdot\M{(\zeta^\prime\Rightarrow\zeta)}^{-1},$$ for all $k\in\M{w}$.   \\
    \emph{Proof:} This follows from a routine calculation. \\
    
    \item \emph{Claim:} $\M{}$ satisfies the cocycle condition. \\
    \emph{Proof:} If $\zeta_0, \zeta_1$ and $\zeta_2$ are three composable $\Sigma$-zigzags $\bullet\xrightarrow{\zeta_0}\bullet\xrightarrow{\zeta_1}\bullet\xrightarrow{\zeta_2}\bullet$, then both $\gamma(\zeta_1,\zeta_2)\cdot\gamma(\zeta_0,\zeta_2\circ\zeta_1)$ and $\M{\zeta_2}(\gamma(\zeta_0,\zeta_1))\cdot\gamma(\zeta_1\circ\zeta_0,\zeta_2)$ are equal to $$\sigma(\zeta_2)\sigma(\zeta_1)\sigma(\zeta_0)\sigma(\zeta_2\circ\zeta_1\circ\zeta_0)^{-1},$$
    as desired.
\end{enumerate}
This concludes the argument.
\end{proof}

It follows from similar computations that $\Psi$ is a well-defined monomorphism.

\begin{rem}
We can define the 2-morphisms ${\M{(\zeta^\prime\Rightarrow\zeta)}}$ in a more intuitive way: for any pair of morphisms fitting into a diagram like \eqref{diag:2morphisms}, the functor $\F{}$ maps it to a diagram of the form:
\begin{equation*}\label{diag:teardrop}
    \begin{tikzcd}[row sep=0.5cm, column sep=0.8cm]
                                                                                                  & {}            &            & {}            &                                                                                                                                                                                                               & {}                    &        & {}            &                                                                                                                                                                                                              & {}                        &   \\
    \F{w} \arrow[rr, "\F{u_0}\circ\F{h_0}", bend left=74] \arrow[rr, "\F{h_0^\prime}"', bend right=74] &               & \F{y_0^\prime} &               & \F{x_0} \arrow[ll, "\F{u_0}\circ \F{f_0}"', bend right=74] \arrow[rr, "\F{u_1}\circ\F{h_1}", bend left=74] \arrow[ll, "\F{f_0^\prime}\circ\F{v_0}", bend left=74] \arrow[rr, "\F{h_1^\prime}\circ\F{v_0}"', bend right=74] &                       & \cdots &               & \F{x_k} \arrow[ll, "\F{u_k}\circ\F{ f_k}"', bend right=74] \arrow[rr, "\F{h_{k+1}}", bend left=74] \arrow[ll, "\F{f_k^\prime}\circ\F{v_k}", bend left=74] \arrow[rr, "\F{h_{k+1}^\prime}\circ\F{v_k}"', bend right=74] &                           & \F{z} \\
                                                                                                  & {} \arrow[uu, "t_0"', Rightarrow] &            & {} \arrow[uu, "1_{\G}"', Rightarrow] &                                                                                                                                                                                                               & {} \arrow[uu, "t_1"', Rightarrow] &        & {} \arrow[uu, "1_{\G}"', Rightarrow] &                                                                                                          []                                                                                                    & {} \arrow[uu, "t_{k+1}"', Rightarrow] &  
    \end{tikzcd}
    \end{equation*}
    where each $t_i$ is the 2-morphism assigned by $\F{}$. We can define ${\M{(\zeta^\prime\Rightarrow\zeta)}}$ to be the horizontal composition $t_0\cdots t_{k+1}$ of the $t_i$'s across this diagram. This definition is, in fact, equivalent to the one given above, but we omit the details here.
\end{rem}

	\section{Equivariant Homotopy Equivalence}\label{sec:equhomequiv}
	
	We are now ready to state our main result, which combines the Basic Construction with discrete Morse theory:
	
	\begin{thm}\label{thm:homequiv}
	    Given a regular simplicial action $\G$ on $X$, let $\F{}$ and $\Phi$ denote the complex of groups and morphism associated to this action, for some choice of lift and transfers. If $\Sigma$ is an $\F{}$-compatible acyclic partial matching on the quotient $Y$, then the Morse complex of groups ${\M:\flo{Y}\to\grp}$ and associated monomorphism $\Psi:\M\to\cst_{\flo{Y}}$ satisfy the following property: the classifying space of the development $|\Delta\D(\M,\Psi)|$ is $\G$-equivariantly homotopy-equivalent to $X$.
	\end{thm}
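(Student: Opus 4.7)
The plan is to follow the roadmap sketched in the introduction: lift $\Sigma$ equivariantly to a matching on $X$, apply Theorem~\ref{thm:flocat} equivariantly, and then identify the development with the lifted flow category via the basic construction.

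First, I would construct $\tilde{\Sigma}$ on $X$ by taking $\p_{\G}$-preimages of pairs in $\Sigma$. The two axioms of Definition~\ref{def:fcompatible} are tailored to make this well-defined. The isomorphism axiom forces stabilisers to match across paired simplices, so no two orbit-lifts of a single $\Sigma$-pair can collide, and thus $\tilde{\Sigma}$ satisfies the dimension and partition axioms. The square/horn axiom, together with the regularity of the action, ensures that a putative $\tilde{\Sigma}$-cycle in $X$ would descend to a $\Sigma$-cycle in $Y$, so acyclicity lifts. By construction $\tilde{\Sigma}$ is $\G$-invariant.

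Next, since $\tilde{\Sigma}$ is $\G$-invariant, the $\G$-action on $X$ induces $\G$-actions on $\ent{X}$, on its localisation $\loc{\tilde{\Sigma}}[X]$, and on the full subcategory $\flo{\tilde{\Sigma}}[X]$ spanned by critical simplices. The maps of Theorem~\ref{thm:flocat} are manifestly $\G$-equivariant LP-functors; passing to classifying spaces yields a zig-zag
\[
X \;\simeq_{\G}\; |\Delta\ent{X}| \;\simeq_{\G}\; |\Delta\loc{\tilde{\Sigma}}[X]| \;\simeq_{\G}\; |\Delta\flo{\tilde{\Sigma}}[X]|
\]
of $\G$-equivariant homotopy equivalences. (Here I invoke the standard fact that a $\G$-equivariant map of $\G$-CW complexes that is a weak equivalence on all fixed-point sets is a $\G$-equivariant homotopy equivalence; each map in the chain restricts to a classical equivalence on each stabiliser-fixed subcategory by the same localisation argument.)

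The crux is then to identify $\flo{\tilde{\Sigma}}[X]$ with $\D(\M{},\Psi)$ as $\G$-LP-categories. First, orbits of critical simplices in $X$ are exactly critical simplices in $Y$, and orbits of $\tilde{\Sigma}$-zigzags give $\Sigma$-zigzags in $Y$ with the correct partial order, so the quotient $\flo{\tilde{\Sigma}}[X]/\G$ is isomorphic to $\flo{Y}$. Choose a lift function and transfer elements for this action; the stabiliser of a critical simplex $y$'s lift is $\F{y}$, and a short computation shows that the transfer element assigned to a zigzag $\zeta$ is exactly the product $\sigma(\zeta)=\sigma(h_{k+1})\sigma(f_k)^{-1}\cdots\sigma(h_0)$ appearing in Definition~\ref{def:mcog}. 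Hence the complex of groups and monomorphism associated (via Definition~\ref{def:actcog}) to the $\G$-action on $\flo{\tilde{\Sigma}}[X]$ coincide with $(\M{},\Psi)$, and Proposition~\ref{prop:bascon}(3) furnishes a $\G$-equivariant isomorphism $\D(\M{},\Psi)\cong_{\G}\flo{\tilde{\Sigma}}[X]$ of LP-categories. Composing with the chain of equivalences above proves the theorem.

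I expect Step~1 (the equivariant lift) to be the main obstacle: one must verify carefully that the compatibility axioms are exactly what is needed to preserve the partition, dimension, and acyclicity requirements on $X$, and that regularity of the $\G$-action handles the subtleties arising from simplices whose stabilisers change upon passing to faces. The identification step requires only patient bookkeeping once the lift is in hand, since Lemma~\ref{lem:cogcommute} or a direct appeal to the universal property in Proposition~\ref{prop:bascon}(3) handles the remaining rigidity.
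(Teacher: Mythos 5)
Your proposal follows essentially the same route as the paper's proof: lift $\Sigma$ to a $\G$-invariant matching $\tilde{\Sigma}$ on $X$ using $\F{}$-compatibility (Lemma~\ref{lem:sigmatilde}), push the action to $\mathbf{Flo}_{\tilde{\Sigma}}[X]$ (Lemma~\ref{lem:floaction}), identify its quotient with $\flo{Y}$ (Proposition~\ref{prop:catiso}), exhibit $(\M{},\Psi)$ as the complex of groups arising from a choice of lifts and transfers for that action (Proposition~\ref{prop:liftscommute} plus Lemma~\ref{lem:cogcommute}), and conclude via the basic construction (Proposition~\ref{prop:bascon}) and the equivariance of the functors in Theorem~\ref{thm:flocat}. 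The plan is correct as stated, and its deferred verifications are exactly the ones the paper carries out.
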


Throughout this section, we assume that the hypotheses of Theorem~\ref{thm:homequiv} hold. The proof of this theorem follows from a sequence of results, and the approach is summarised in Figure~\ref{fig:thmpf}. The high-level strategy is to lift the acyclic partial matching $\Sigma$ on $Y$ to a matching $\tilde{\Sigma}$ in $X$, to push the $\G$-action on $X$ to $\mathbf{Flo}_{\tilde{\Sigma}}[X]$, and to show that $\mathbf{M}$ arises as the complex of groups for this action, for some choices dependent on the original choices made to construct $\F{}$.

\begin{figure}[h!]
\begin{tikzcd}[row sep = huge, column sep = huge]
X \arrow[r, "\mathbf{L}_{\tilde{\Sigma}}"] \arrow[d, "\mathbf{P}"'] & {\mathbf{Loc}_{\tilde{\Sigma}}[X]} \arrow[d, "\tilde{\mathbf{P}}"]
& {\mathbf{Flo}_{\tilde{\Sigma}}[X]} \arrow[l, "\mathbf{J}_{\tilde{\Sigma}}"'] \arrow[rd, "\mathbf{P}"] \arrow[d, "\tilde{\mathbf{P}}"] &                                                                  \\
Y \arrow[r, "\mathbf{L}_\Sigma"']                            & {\mathbf{Loc}_\Sigma[Y]}           & {\mathbf{Flo}_\Sigma[Y]} \arrow[l, "\mathbf{J}_\Sigma"]                                                                          & {\mathbf{Flo}_{\tilde{\Sigma}}[X]/G} \arrow[l, "\mathbf{E}"]
\end{tikzcd}
\caption{We prove Theorem~\ref{thm:homequiv} by first lifting the acyclic partial matching on $Y$ to a $\G$-equivariant one on $X$, and then by showing that the development of $(\mathbf{M},\Psi)$ is isomorphic to $\mathbf{Flo}_{\tilde{\Sigma}}[X]$.}
\label{fig:thmpf}
\end{figure}

\begin{lemma}\label{lem:sigmatilde}
If $\Sigma=\{f_i:x_i\to y_i\}$ is an acyclic partial matching on the quotient $Y$, then $\tilde{\Sigma}\coloneqq \bigcup_i \G f_i$ is an acyclic partial matching on $X$.
\end{lemma}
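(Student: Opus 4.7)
The plan is to verify the three axioms of Definition~\ref{def:acycmatch} for $\tilde\Sigma$, using the orbit functor $\mathbf{P}:\fac{X}\to\fac{Y}$ to transfer structure between $\tilde\Sigma$ upstairs and $\Sigma$ downstairs. The dimension axiom is immediate: each pair of $\tilde\Sigma$ is a $\G$-translate of some $f_i\in\Sigma$, and $\G$ acts on $X$ by simplicial automorphisms, which preserve the dimension of every simplex.

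For the partition axiom, suppose a simplex $\tilde x\in X$ lies in two pairs of $\tilde\Sigma$. Projecting via $\mathbf{P}$ (which is dimension-preserving under regularity) and invoking partition of $\Sigma$, I would conclude that both pairs are $\G$-translates of the same $f_i$, with $\tilde x$ occupying the same position (top or bottom). Writing $\tilde x=gx_i=g'x_i$ when the tops are shared, regularity (Example~\ref{ex:grpactsimp}) forces $g^{-1}g'\in\G_{x_i}$ to fix every face of $x_i$, so $gy_i=g'y_i$ and the two lifts coincide; the shared-bottom case is analogous, and the crossed case (top of one equalling bottom of another) is excluded on dimensional grounds.

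The acyclicity axiom is the main obstacle. Once partition is available, $\mathbf{P}$ descends to a well-defined map $\tilde\Sigma\to\Sigma$ sending the $\btrt$ relation to its reflexive closure in $\Sigma$. A non-trivial cycle $\tau_0\btrt\tau_1\btrt\cdots\btrt\tau_n=\tau_0$ in $\tilde\Sigma$ therefore either projects to a non-trivial cycle in $\Sigma$ (impossible by acyclicity of $\Sigma$), or collapses into a single orbit $\G f_i$ with $\tau_k=g_k f_i$. To rule out the latter, I would show that $\tau_k\btrt\tau_{k+1}$ inside a single orbit already forces $\tau_k=\tau_{k+1}$: setting $h_k:=g_k^{-1}g_{k+1}$, the relation yields that $h_k y_i$ is a face of $x_i$ in $X$, so both $y_i$ and $h_k y_i$ are faces of $x_i$ lying in a common $\G$-orbit.

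The key input, extracted from the second regularity condition of Example~\ref{ex:grpactsimp}, is that distinct vertices of $x_i$ must lie in distinct $\G$-orbits (otherwise the tuple in the regularity condition would collapse a vertex and fail to span a simplex of the correct dimension). This forces $h_k$ to fix the vertex set of $y_i$ pointwise, hence $h_k y_i=y_i$; combined with the partition axiom for $\tilde\Sigma$ already established, this yields $\tau_k=\tau_{k+1}$ throughout, collapsing the purported cycle and completing the argument.
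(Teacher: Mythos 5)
Your dimension argument and the shared-top case of the partition axiom are fine, and your treatment of the single-orbit case in the acyclicity step (distinct vertices of a simplex lie in distinct $\G$-orbits under regularity, hence $h_k$ fixes the lifted $y_i$) is a legitimate variant of the paper's appeal to Definition~\ref{def:grpact}\eqref{item:grpact2}. But there is a genuine gap at the heart of the partition axiom: the shared-bottom case is \emph{not} ``analogous'' to the shared-top case. Regularity gives that the stabiliser of a simplex fixes all of its faces, so sharing the top forces the bottoms to agree; it does \emph{not} give the reverse implication, because the stabiliser of a face need not stabilise a coface. Concretely, if $g\tilde{y}=g'\tilde{y}$ for the lifted pair $(\tilde{x}>\tilde{y})$, then $h:=g^{-1}g'$ stabilises $\tilde{y}$, but nothing so far forces $h\tilde{x}=\tilde{x}$. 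Indeed the statement is false for a general acyclic partial matching $\Sigma$ on $Y$: take $X$ the path $a$--$c$--$b$ with $\G=\mathbb{Z}/2$ swapping $a$ and $b$ and fixing $c$ (a regular action), and pair the quotient edge with the vertex $[c]$ in $Y$; the lifted collection $\tilde{\Sigma}$ then pairs \emph{both} edges of $X$ with the single vertex $c$, violating partition. What rules this out in the paper is precisely the standing hypothesis that $\Sigma$ is $\F{}$-compatible: since $\F{f}$ is an isomorphism for $f\in\Sigma$, the element $\sigma(f)h\sigma(f)^{-1}\in\F{y}$ is the image of some $k\in\F{x}$, which forces $h=k\in\G_{\tilde{x}}$ and hence $g\tilde{x}=g'\tilde{x}$ (this is ``Case 2'' in the paper's proof). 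Your proposal never invokes $\F{}$-compatibility at all, so it is in effect attempting to prove a false statement.

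The gap then propagates: your acyclicity argument concludes in the single-orbit case by combining $h_k\tilde{y}=\tilde{y}$ with ``the partition axiom already established'' (two distinct pairs of $\tilde{\Sigma}$ sharing a bottom), which is exactly the unproved shared-bottom case. To repair the proof you must import the $\F{}$-compatibility hypothesis (implicit in ``Throughout this section\dots'') and run the conjugation argument above for the shared-bottom case; with that inserted, the rest of your outline goes through and matches the paper's strategy of projecting a putative violation down to $\Sigma$ and splitting into the cross-orbit case (contradicting acyclicity of $\Sigma$) and the single-orbit case (contradicting regularity/partition).
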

\begin{proof}
We have to prove that $\tilde{\Sigma}$ satisfies the dimension, partition and acyclicity conditions from Definition~\ref{def:acycmatch}. The dimension property follows immediately since $\text{dim}(\tilde{z})=\text{dim}(z)$ for any $\tilde{z}$ satisfying $\p(\tilde{z})=z$. 

Since $\Sigma$ satisfies the partition property, it follows that ${\G f_i\cap \G f_j=\emptyset}$ for $i\neq j$. So it remains to show that for a given $f\in\Sigma$, the orbit $\G f\subset\tilde{\Sigma}$ contains no repeated objects. This could occur in precisely two ways, as illustrated in Figure~\ref{fig:sigmatildetwo}. Without loss of generality, we can assume that one of the morphisms violating the partition property has source $\lambda(x)$.
\begin{figure*}[h!]
\centering
\begin{tikzcd}[column sep=tiny]
                        &  & (1)                                                                                             &  &                         &  &  &                                                   & (2)                               &                                                 \\
\sigma(f)^{-1}\lambda(y) &  & g\lambda(x) \arrow[rr, "g\lambda(f)"', bend right] \arrow[ll, "\lambda(f)", bend left] \arrow[d,equals] &  & g\sigma(f)^{-1}\lambda(y) &  &  & g\lambda(x) \arrow[rd, "g\lambda(f)"', bend right] &                                   & \lambda(x) \arrow[ld, "\lambda(f)", bend left] \\
                        &  & \lambda(x)                                                                                     &  &                         &  &  &                                                   & g\sigma(f)^{-1}\lambda(y) \arrow[d,equals] &                                                 \\
                        &  &                                                                                                 &  &                         &  &  &                                                   & \sigma(f)^{-1}\lambda(y)           & \\
                        & & & & & & & & &
\end{tikzcd}
\caption{The two possible ways in which $\tilde{\Sigma}$ could violate the partition property of an acyclic partial matching.}
\label{fig:sigmatildetwo}
\end{figure*}

Case 1 cannot occur because there is a unique morphism with a given source in $\lambda(f)$'s orbit, so $g\lambda(f)=\lambda(f)$. Suppose we are in Case 2; then $g\sigma(f)^{-1}\lambda(y)=\sigma(f)^{-1}\lambda(y)$ and so $\sigma(f)g\sigma(f)^{-1}\in\F{y}$. Recall that since $\Sigma$ is $\F{}$-compatible, the homomorphism $\F{f}$ is an isomorphism for every $f\in\Sigma$. Hence, there is a $k\in\F{x}$ such that $$\F{f}(k)=\sigma(f)\cdot k\cdot\sigma(f)^{-1}=\sigma(f)\cdot g\cdot\sigma(f)^{-1}.$$ So $k=g$, and $g\lambda(x)=\lambda(x)$ and so we satisfy the partition property.

Suppose that $\tilde{\Sigma}$ is not acyclic, i.e., that there exists a sequence (writing $\tilde{x}_i$ for $\lambda(x_i)$ and similarly for $y_i$):
$$g_1\sigma(f_1)^{-1}\tilde{y}_1<g_1\tilde{x}_1>g_2\sigma(f_2)^{-1}\tilde{y}_2<\cdots>g_n\sigma(f_n)^{-1}\tilde{y}_n<g_n\tilde{x}_n,$$ with $g_n\tilde{x}_n>g_1\sigma(f_1)^{-1}\tilde{y}_1.$ The projection of this sequence 
$$y_1<x_1>y_2<x_2>\cdots>y_n<x_n$$ to the quotient has the property that $x_n>y_1$. 

This projected sequence could contain repeats if any of the $\tilde{x}_i$ are in the same orbit, for example; if, say, the pair $y_i<x_i$ is equal to $y_j<x_j$ for some $i<j$, remove from the projected sequence the entire portion between these two pairs, inclusive of $y_j<x_j$.

If this sequence has length strictly bigger than one, it contradicts the acyclicity of $\Sigma$. This reduced sequence will have length equal to one if and only if all pairs involving $\tilde{y}_i$ and$\tilde{x}_i$ lie in the same orbit. But this contradicts condition~\eqref{item:grpact2} from Definition~\ref{def:grpact}, because this would require the existence of relations of the form $g_1\tilde{y}<\tilde{x}>g_2\tilde{y}$ for some group elements $g_1$ and $g_2$, i.e., it would require two distinct morphisms in the same orbit with the same source.
\end{proof}

Using the action on $X$, we can define an action on $\mathbf{Loc}_{\tilde{\Sigma}}[X]$: the objects of both categories are the same, and the action on objects remains the same. On morphisms, each $g\in\G$ acts individually on each arrow in the zigzag. More precisely, if $\zeta$ is the  $\tilde{\Sigma}$ zigzag
    \[ 
    \begin{tikzcd}
         w \arrow[r, "h_0"] & y_0 & x_0 \arrow[l, "f_0"'] \arrow[r, "h_1"] & y_1 & \cdots \arrow[l, "f_1"'] & x_k \arrow[l, "f_k"'] \arrow[r, "h_{k+1}"] & z,
    \end{tikzcd}
    \]
    then the morphism $g\cdot\zeta$ is defined to be
    \[ 
    \begin{tikzcd}
         gw \arrow[r, "g\cdot h_0"] & gy_0 & gx_0 \arrow[l, "g\cdot f_0"'] \arrow[r, "g\cdot h_1"] & gy_1 & \cdots \arrow[l, "g\cdot f_1"'] & gx_k \arrow[l, "g\cdot f_k"'] \arrow[r, "g\cdot h_{k+1}"] & gz.
    \end{tikzcd}
    \]
The following result confirms that this action is well-defined, and also that it restricts to $\mathbf{Flo}_{\tilde{\Sigma}}[X]$.

\begin{lemma}\label{lem:floaction}
The induced $\G$-action on $\mathbf{Flo}_{\tilde{\Sigma}}[X]$ is well-defined.
\end{lemma}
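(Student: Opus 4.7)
The plan is to verify the assertion in four stages, each of which reduces to an immediate consequence of the fact that $\tilde{\Sigma}$ is $\G$-invariant by construction (since it was defined as the union of $\G$-orbits of elements of $\Sigma$) together with the fact that the $\G$-action on $X$ is through LP-functors $\alpha_g:\fac{X} \to \fac{X}$.

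First I would observe that the assignment $\zeta \mapsto g\cdot\zeta$ sends $\tilde{\Sigma}$-zigzags to $\tilde{\Sigma}$-zigzags: every backward arrow $f_i$ in $\zeta$ is either an identity or lies in $\tilde{\Sigma}$, and $\G$-invariance ensures $g\cdot f_i$ is again an identity or in $\tilde{\Sigma}$; the forward entrance paths $h_i$ are sent to entrance paths because $\alpha_g$ preserves the face relation. Next, I would check that the action descends to equivalence classes. By Remark~\ref{rem:eqrel}, it suffices to show that horizontal and vertical equivalence are each preserved. Horizontal equivalence involves only insertion or deletion of identities, which is trivially $\G$-equivariant. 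For vertical equivalence, applying $g$ cellwise to the witnessing $\ent{X}$-diagram produces another such diagram, because $\alpha_g$ preserves composition, identities, and the partial order $\two$ on hom-sets; moreover all vertical arrows $u_i, v_i$ remain in $\tilde{\Sigma} \cup \{\text{identities}\}$ by $\G$-invariance of $\tilde{\Sigma}$.

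Third, I would verify that the partial order on morphisms of $\loc{X}$ is preserved: if $\zeta \two \zeta'$ is witnessed by a diagram with vertical arrows in $\tilde{\Sigma} \cup \{\text{identities}\}$, applying $g$ to the whole diagram yields a corresponding diagram witnessing $g\cdot\zeta \two g\cdot\zeta'$, by the same reasoning as above. Fourth, the assignment respects composition and identities since concatenation of zigzags commutes with cellwise $\G$-action, and $(gh)\cdot\zeta = g\cdot(h\cdot\zeta)$ as well as $1_\G \cdot \zeta = \zeta$ hold arrow-by-arrow because $\alpha$ is a group homomorphism $\G \to \text{Aut}(\fac{X})$.

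For the restriction claim, I would note that the critical simplices of $\tilde{\Sigma}$ are exactly those not occurring in any pair of $\tilde{\Sigma}$; since $\tilde{\Sigma}$ is $\G$-invariant, its collection of critical simplices is also $\G$-invariant. Thus the $\G$-action on $\loc{X}$ restricts to the full subcategory $\flo{X}$ spanned by these simplices. The main (and essentially only) obstacle is the verification for vertical equivalence, which is purely bookkeeping once one observes that each $\alpha_g$ is an LP-functor and $\tilde{\Sigma}$ is $\G$-stable; everything else is formal.
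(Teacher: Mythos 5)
There is a genuine gap. What you verify — that $g\cdot\zeta$ is again a $\tilde{\Sigma}$-zigzag, that horizontal and vertical equivalence and the partial order $\two$ are preserved, that composition, identities and the group law hold, and that critical cells form a $\G$-invariant set so the action restricts to $\flo{X}$ — only establishes that one gets a homomorphism $\G \to \text{Aut}(\mathbf{Flo}_{\tilde{\Sigma}}[X])$. But in this paper an \emph{action} on an LP-category (Definition~\ref{def:grpact}) is a defined term requiring two further conditions: (1) for every non-identity morphism $\zeta: w \to z$ and every $g$, one must have $gw \neq z$; and (2) if $g$ fixes the source $w$ of $\zeta$, then $g$ must fix $\zeta$ itself. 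Verifying these for the induced action on the flow category is the substantive content of the lemma, and it is not formal bookkeeping: both conditions are needed downstream (e.g.\ the quotient LP-category and its partial order, the lift/transfer construction of the complex of groups over $\mathbf{Flo}_{\tilde{\Sigma}}[X]/\G$, and hence the comparison with $\M{}$ rely on them, cf.\ Proposition~\ref{prop:quotposet}).

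Concretely, condition (1) fails to be automatic because localisation creates new morphisms between critical simplices; the paper rules out $gw = z$ by projecting the hypothetical zigzag $g\cdot\zeta$ under the orbit functor to $\flo{Y}$, where it would give a non-identity endomorphism, contradicting the acyclicity of $\Sigma$. Condition (2) requires an induction along the zigzag: if $g$ fixes $w$, then property \eqref{item:grpact2} of the original action forces $g h_0 = h_0$, hence $g y_0 = y_0$; the partition property of $\tilde{\Sigma}$ (Lemma~\ref{lem:sigmatilde}) then forces $g x_0 = x_0$ and $g f_0 = f_0$, since $y_0$ can be the target of only one $\tilde{\Sigma}$-pair; repeating this along the zigzag shows $g\cdot\zeta = \zeta$. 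Your proposal asserts that "everything else is formal," but these two verifications are exactly the non-formal steps, and without them the lemma as used later in the paper is not proved.
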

\begin{proof}
First note that the objects of $\mathbf{Loc}_{\tilde{\Sigma}}[X]$ are the same as for $X$, and so is the action on the objects; for the action to be well-defined on $\mathbf{Flo}_{\tilde{\Sigma}}[X]$, we need that the orbits of critical objects contain only critical objects, and similarly for non-critical objects. But this follows trivially from the fact that $\tilde{\Sigma}$ is $\G$-invariant by construction. 

The $\G$-invariance of $\tilde{\Sigma}$ also guarantees that $\tilde{\Sigma}$-zigzags are sent to $\tilde{\Sigma}$-zigzags under the action. That is, if $\zeta$ is the  $\tilde{\Sigma}$ zigzag
    \[ 
    \begin{tikzcd}
         w \arrow[r, "h_0"] & y_0 & x_0 \arrow[l, "f_0"'] \arrow[r, "h_1"] & y_1 & \cdots \arrow[l, "f_1"'] & x_k \arrow[l, "f_k"'] \arrow[r, "h_{k+1}"] & z,
    \end{tikzcd}
    \]
    then the morphism $g\cdot\zeta$
    \[ 
    \begin{tikzcd}
         gw \arrow[r, "g\cdot h_0"] & gy_0 & gx_0 \arrow[l, "g\cdot f_0"'] \arrow[r, "g\cdot h_1"] & gy_1 & \cdots \arrow[l, "g\cdot f_1"'] & gx_k \arrow[l, "g\cdot f_k"'] \arrow[r, "g\cdot h_{k+1}"] & gz,
    \end{tikzcd}
    \]
    is also a $\tilde{\Sigma}$-zigzag because each $g\cdot f_i$ is in $\tilde{\Sigma}$ by definition of $\tilde{\Sigma}$. 
    
    Next we check that if $\zeta\sim\xi$ then $g\zeta\sim g\xi$, for any $g\in\G$. If $\zeta$ and $\xi$ are horizontally equivalent, then their $g$-images are also horizontally equivalent because $g\cdot \text{id}_z=\text{id}_{g\cdot z}$ for any object $z$. If $\zeta$ and $\xi$ are vertically equivalent, then they fit into a diagram as follows
    \[
    \begin{tikzcd}
    w \arrow[d,equals] \arrow[r]  & y_0 \arrow[d] & x_0 \arrow[d] \arrow[r] \arrow[l]                    & \cdots \arrow[r] & z \arrow[d,equals]  \\
    w \arrow[r] \arrow[ru,equals] & y_0^\prime    & x_0^\prime \arrow[r] \arrow[l] \arrow[lu,equals] \arrow[ru,equals] & \cdots \arrow[r] & z \arrow[lu,equals]
    \end{tikzcd}
    \]
    which maps to the following under $g$:
    \[
    \begin{tikzcd}
    g\cdot w \arrow[d,equals] \arrow[r]  & g\cdot y_0 \arrow[d] & g\cdot x_0 \arrow[d] \arrow[r] \arrow[l]                    & \cdots \arrow[r] & g\cdot z \arrow[d,equals]  \\
    g\cdot w \arrow[r] \arrow[ru,equals] & g\cdot y_0^\prime    & g\cdot x_0^\prime \arrow[r] \arrow[l] \arrow[lu,equals] \arrow[ru,equals] & \cdots \arrow[r] & g\cdot z \arrow[lu,equals]
    \end{tikzcd}
    \]
    The equality of composites in each square follows from the fact that the original action on $X$ respects the partial order on morphisms, and each composite consists of morphisms from $X$. Hence, the morphism $g\cdot\zeta$ is vertically equivalent to $g\cdot\xi$ and so the induced action is well-defined on equivalence classes of morphisms.
    
    Lastly, we need to check that the induced action still satisfies the two properties in Definition~\ref{def:grpact}. Suppose that $\zeta$ is the morphism \[
    \begin{tikzcd}
        w \arrow[r, "h_0"] & y_0 & x_0 \arrow[l, "f_0"'] \arrow[r, "h_1"] & y_1 & \cdots \arrow[l, "f_1"'] & x_k \arrow[l, "f_k"'] \arrow[r, "h_{k+1}"] & z,
    \end{tikzcd}
    \]
    and suppose also that $g\cdot w=z$. Then the morphism $g\cdot\zeta$ 
    \[ 
    \begin{tikzcd}
         z \arrow[r, "g\cdot h_0"] & gy_0 & gx_0 \arrow[l, "g\cdot f_0"'] \arrow[r, "g\cdot h_1"] & gy_1 & \cdots \arrow[l, "g\cdot f_1"'] & gx_k \arrow[l, "g\cdot f_k"'] \arrow[r, "g\cdot h_{k+1}"] & gz
    \end{tikzcd}
    \]
    projects under the orbit functor to a cycle in $\flo{Y}$, contradicting the acyclicity of $\Sigma$. 
    
    If instead that there is some $g\in\G$ fixing $w$, then $gh_0:w\to gy_0$. But, since the group action on $X$ satisfies property~\eqref{item:grpact2} of Definition~\ref{def:grpact}, we must have that $g h_0=h_0$, and in particular that $gy_0=y_0$. Since $\tilde{\Sigma}$ satisfies the partition property, we have $gx_0=x_0$ and $gf_0=f_0$, because $y_0$ cannot appear as the target of any other morphism in $\tilde{\Sigma}$. Applying this argument to the rest of the chain of morphisms forming $\zeta$ implies that, if $g$ fixes $w$, then $g$ also fixes $\zeta$.
\end{proof}

We will construct a functor $\E$ between the quotient $\mathbf{Flo}_{\tilde{\Sigma}}[X] /\G$ and the flow category on the original quotient $\flo{Y}$, and show it is an isomorphism of categories. This will enable us to relate our Morse complex of groups and the canonical complex of groups over the orbit space of the action on $\mathbf{Flo}_{\tilde{\Sigma}}[X]$.

Let $\p$ denote the usual orbit functor. Using $[\bullet]_X$ to denote orbit classes, define $\mathbf{\tilde{P}}:\mathbf{Flo}_{\tilde{\Sigma}}[X]\to\flo{Y}$ to be the functor mapping each $\tilde{\Sigma}$-zigzag $$\tilde{h}_{k+1}\circ \tilde{f}_{k}^{-1}\circ\cdots\circ \tilde{f}_{0}^{-1}\circ \tilde{h}_0$$ to the $\Sigma$-zigzag $$[\tilde{h}_{k+1}]_X\circ [\tilde{f}_{k}^{-1}]_X\circ\cdots\circ [\tilde{f}_{0}^{-1}]_X\circ [\tilde{h}_0]_X.$$ 

\begin{lemma}\label{lemma:ptilde}
$\mathbf{\tilde{P}}$ is a surjective LP functor.
\end{lemma}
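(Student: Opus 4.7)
The plan is to check that $\mathbf{\tilde{P}}$ is (i) well-defined on equivalence classes of $\tilde{\Sigma}$-zigzags, (ii) a functor respecting composition and identities, (iii) order-preserving on each hom-set, and (iv) surjective on objects and (equivalence classes of) morphisms. Throughout, I would exploit the fact that on objects $\mathbf{\tilde{P}}$ is just the restriction of the simplicial orbit projection $\p:X \surj Y$, and that a simplex of $X$ is critical with respect to $\tilde{\Sigma}$ if and only if its $\p$-image is critical with respect to $\Sigma$ (since $\tilde{\Sigma}=\bigcup_i \G f_i$ is $\G$-invariant by construction).

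For well-definedness, I would treat the two elementary equivalences on zigzags from Remark~\ref{rem:eqrel} separately. Horizontal equivalence is immediate because $\p$ sends identities to identities. For vertical equivalence, I would observe that any refining diagram in $\ent{X}$ with vertical arrows in $\tilde{\Sigma}\cup\{\text{identities}\}$ projects componentwise to a diagram in $\ent{Y}$ whose vertical arrows lie in $\Sigma\cup\{\text{identities}\}$; this is exactly the data required to exhibit a vertical equivalence between $\mathbf{\tilde{P}}(\tilde{\zeta})$ and $\mathbf{\tilde{P}}(\tilde{\zeta}')$. Preservation of composition and identities is then a routine verification: concatenation of zigzags in $X$ projects to concatenation in $Y$, and the trivial zigzag on a critical $\tilde{w}$ projects to the trivial zigzag on $\p(\tilde{w})$. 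Order preservation is established by the same projection-of-diagrams argument used for vertical equivalence, which confirms that $\mathbf{\tilde{P}}$ is an LP functor.

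Surjectivity on objects is immediate from the correspondence of critical simplices above: for any critical $y\in Y$, any preimage $\tilde{y}\in \p^{-1}(y)$ is automatically critical for $\tilde{\Sigma}$. For surjectivity on morphisms, I would build a lift of a zigzag
\[
\zeta = (w \xrightarrow{h_0} y_0 \xleftarrow{f_0} x_0 \xrightarrow{h_1} y_1 \xleftarrow{f_1} \cdots \xrightarrow{h_{k+1}} z)
\]
inductively. Pick any $\tilde{w}\in \p^{-1}(w)$, and lift the entrance path $h_0$ to a descending chain in $X$ starting at $\tilde{w}$, using regularity of the action (which identifies faces of $\tilde{w}$ with faces of $w$ via vertex-correspondence) to supply each face-relation preimage; this produces $\tilde{h}_0:\tilde{w}\to \tilde{y}_0$. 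Next, use the partition property of $\tilde{\Sigma}$ (Lemma~\ref{lem:sigmatilde}) to extract the unique $\tilde{f}_0\in \tilde{\Sigma}$ with target $\tilde{y}_0$; its source $\tilde{x}_0$ lies in $\p^{-1}(x_0)$. Iterate: lift $h_1$ starting from $\tilde{x}_0$, take the unique $\tilde{\Sigma}$-pair with target the resulting $\tilde{y}_1$, and so on. The result is a $\tilde{\Sigma}$-zigzag between critical simplices in $X$ whose image under $\mathbf{\tilde{P}}$ is $\zeta$ (up to horizontal equivalence).

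The main obstacle is the zigzag lifting for morphism-surjectivity, because each backward step forces a specific target and one must invoke both regularity (to lift descending face chains compatibly with a chosen endpoint) and the partition property of $\tilde{\Sigma}$ (to make the next backward pair canonical). The terminal vertex of the lifted zigzag need not equal a preselected preimage of $z$, but this causes no problem because surjectivity of $\mathbf{\tilde{P}}$ on hom-sets is about hitting the morphism $\zeta$, not about prescribing both endpoints a priori.
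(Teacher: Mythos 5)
Your proposal is correct and follows essentially the same route as the paper: the paper likewise dismisses the LP-functor verification as routine (projection of zigzags, ladder diagrams and order relations componentwise) and proves surjectivity by lifting each constituent morphism of a $\Sigma$-zigzag with prescribed source, noting that lifts of $\Sigma$-arrows land in $\tilde{\Sigma}$. Your write-up simply supplies more detail than the paper's two-sentence argument; the only point stated slightly loosely is that the \emph{existence} of a $\tilde{\Sigma}$-pair with prescribed target $\tilde{y}_i$ comes from $\tilde{\Sigma}$ being a union of full $\G$-orbits (the partition property only gives uniqueness), which you have already implicitly justified.
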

\begin{proof}
It is a routine calculation to check that $\tilde{\mathbf{P}}$ is an LP functor. Every constituent morphism ${h:x\to y}$ in a $\Sigma$-zigzag can be lifted to a morphism in its orbit class with prescribed source $g\lambda(x)$, for any $g\in\G$. Also, any morphism in $\Sigma$ is lifted to a morphism in $\tilde{\Sigma}$. Hence, we can find a $\tilde{\Sigma}$ morphism $\tilde{\zeta}$ in $\mathbf{Flo}_{\tilde{\Sigma}}[X]$ such that $\tilde{\mathbf{P}}(\tilde{\zeta})=\zeta$.
\end{proof}

\[
\begin{tikzcd}[column sep=small]
                                                                 & {\mathbf{Flo}_{\tilde{\Sigma}}[X]} \arrow[ldd, "\mathbf{P}"', two heads] \arrow[rdd, "\mathbf{\tilde{P}}"] &         \\
                                                                 &                                                                                                 &         \\
{\mathbf{Flo}_{\tilde{\Sigma}}[X]/\G} \arrow[rr, "\mathbf{E}"'] &                                                                                                 & \flo{Y}
\end{tikzcd}
\]

Observe that the sets of objects in $\mathbf{Flo}_{\tilde{\Sigma}}[X]/G$ and $\flo{Y}$ are the same; first note that the $\G$-action on $X$ and $\mathbf{Loc}_{\tilde{\Sigma}}[X]$ is the same on objects\footnote{Recall that these two categories have the same objects, i.e., elements of the poset $\mathbf{Fac}[X]$.}, so the sets of orbit classes in $\mathbf{Loc}_{\tilde{\Sigma}}[X]/\G$ and $\mathbf{Loc}_{\Sigma}[Y]$ are the same. Secondly, it is clear from the construction of $\tilde{\Sigma}$ that an object $[x]\in\mathbf{Loc}_{\Sigma}[Y]$ is $\Sigma$-critical if and only if $gx$ is $\tilde{\Sigma}$-critical for every $g\in\G$. Hence, when we restrict to the flow categories, the sets of classes of objects are identical. 

Thus, we define $\mathbf{E}:\mathbf{Flo}_{\tilde{\Sigma}}[X]/\G\to\flo{Y}$ to be the identity on objects. Since $\mathbf{P}$ is surjective, for each orbit class of morphisms $[\zeta]\in\mathbf{Flo}_{\tilde{\Sigma}}[X] /\G$ there exists a morphism $\tilde{\zeta}\in\mathbf{Flo}_{\tilde{\Sigma}}[X]$ projecting to $\zeta$. Define $$\mathbf{E}([\zeta])\coloneqq\mathbf{\tilde{P}}(\tilde{\zeta}).$$

Clearly $\E(\text{id}_x)=\text{id}_{\E x}=\text{id}_x$ because the only morphisms in $\mathbf{Flo}_{\tilde{\Sigma}}[X]$ projecting to the identity are identities on $g\lambda(x)$, for all $g\in\G$. Any such map projects to $\text{id}_x$ under $\tilde{\mathbf{P}}$. Also, $\E([\zeta\circ\zeta'])=\E([\zeta])\circ\E([\zeta'])$, and any lifted morphism projecting to $\zeta\circ\zeta'$ can be separated into composable choices of $\tilde{\zeta}$ and $\tilde{\zeta'}$. Whilst choices made separately for $\E([\zeta])\circ\E([\zeta'])$ may not be composable in $\mathbf{Flo}_{\tilde{\Sigma}}[X]$, they will lie in the same respective orbits, and so will project to the same morphisms under $\tilde{\mathbf{P}}$.

\begin{prop}\label{prop:catiso}
The functor $\E:\mathbf{Flo}_{\tilde{\Sigma}}[X]/\G\to\mathbf{Flo}_\Sigma[Y]$ is a well-defined isomorphism of categories.
\end{prop}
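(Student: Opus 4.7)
The strategy is to verify in turn that (a) $\E$ is well-defined on equivalence classes of morphisms, (b) $\E$ is a functor, (c) $\E$ is bijective on objects and morphisms, and (d) $\E$ is compatible with the poset enrichment. Well-definedness follows from the $\G$-invariance of $\tilde{\mathbf{P}}$ on morphisms: if $\tilde{\zeta}$ and $\tilde{\zeta}' = g\tilde{\zeta}$ are two representatives of the same orbit class $[\zeta]\in\mathbf{Flo}_{\tilde{\Sigma}}[X]/\G$, then each constituent morphism of $g\tilde{\zeta}$ lies in the same $\G$-orbit as the corresponding morphism of $\tilde{\zeta}$, so $\tilde{\mathbf{P}}(g\tilde{\zeta})=\tilde{\mathbf{P}}(\tilde{\zeta})$. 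Functoriality was already sketched in the paragraph preceding the proposition and uses only the $\G$-equivariance of the composition rule in $\mathbf{Loc}_{\tilde{\Sigma}}[X]$.

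Bijectivity on objects is immediate since $\E$ acts as the identity on the common object set. For morphisms, surjectivity is Lemma~\ref{lemma:ptilde}. The crucial step is injectivity: if $\E([\tilde{\zeta}_1])=\E([\tilde{\zeta}_2])=\zeta$ in $\flo{Y}$, let $w_1,w_2$ be the respective sources; both lift the source $w$ of $\zeta$, so there exists $g\in\G$ with $gw_1=w_2$, and we replace $\tilde{\zeta}_1$ by $g\tilde{\zeta}_1$. The claim is that a $\tilde{\Sigma}$-zigzag is determined, as an equivalence class, by its source lift together with its image in $\flo{Y}$. This rests on two sub-claims about individual morphisms of $X$: (i) a forward-pointing entrance path in $Y$ with a prescribed lift of its source admits a unique lift in $X$, since any two such lifts differ by an element fixing the source, which by Definition~\ref{def:grpact}\eqref{item:grpact2} acts trivially on morphisms out of that source; and (ii) a backward-pointing morphism $f\in\Sigma$ with a prescribed lift of its target admits a unique lift, by the very argument used in Case~2 of the proof of Lemma~\ref{lem:sigmatilde}, where the $\F{}$-compatibility of $\Sigma$ forces any $g$ in the target's stabilizer to already lie in the source's stabilizer. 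Alternating (i) and (ii) along the zigzag shows $g\tilde{\zeta}_1=\tilde{\zeta}_2$ on the nose, hence $[\tilde{\zeta}_1]=[\tilde{\zeta}_2]$.

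Finally, I would verify that $\E$ preserves the partial order on hom-sets, giving an LP-isomorphism. The order on morphisms of $\mathbf{Flo}_{\tilde{\Sigma}}[X]/\G$ is the quotient order of Proposition~\ref{prop:quotposet}, while the order on $\flo{Y}$ comes from Definition~\ref{def:catloc} and is witnessed by vertical entrance paths lying in $\Sigma\cup\set{\text{identities}}$. Given a witnessing diagram in $\flo{Y}$, the uniqueness of lifts established in (i)--(ii) above (applied column by column to the vertical entrance paths and to the corners of each square) produces a corresponding diagram in $\mathbf{Flo}_{\tilde{\Sigma}}[X]$ witnessing the relation $\two$ between some representatives of the orbit classes; conversely, any such diagram downstairs is obtained by applying $\tilde{\mathbf{P}}$ to one upstairs. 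The main obstacle throughout is the injectivity step: the uniqueness-of-lifts argument must be threaded through the equivalence relation on $\tilde{\Sigma}$-zigzags so that strict equality of lifted zigzags yields equality of their orbit classes, which is why the $\F{}$-compatibility hypothesis enters essentially at the backward-pointing morphisms.
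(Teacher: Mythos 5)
Your proposal is correct and follows essentially the same route as the paper's proof: $\E$ is the identity on objects, surjectivity comes from Lemma~\ref{lemma:ptilde}, injectivity from uniqueness of zigzag lifts with a prescribed source, and order-compatibility from lifting the witnessing diagrams. In fact your injectivity step --- uniqueness of forward lifts via Definition~\ref{def:grpact}\eqref{item:grpact2} and of backward lifts via the $\F{}$-compatibility argument from Case~2 of Lemma~\ref{lem:sigmatilde} --- spells out precisely the implication ``equal images under $\tilde{\mathbf{P}}$ force the lifted zigzags into the same orbit'' that the paper asserts without detail, and the remaining subtlety you flag (threading the uniqueness argument through the horizontal/vertical equivalence relation on zigzags) is treated no more explicitly in the paper's own proof.
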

\begin{proof}
This follows from the following claims:
\begin{enumerate}
    \item \emph{Claim:} The functor $\E$ is independent of choice of $\tilde{\zeta}$ in $\mathbf{Flo}_{\tilde{\Sigma}}[X]$. \\
    \emph{Proof:} If $\tilde{\zeta}$ and $\tilde{\zeta}^\prime$ are two $\tilde{\Sigma}$-zigzags such that $\mathbf{P}(\tilde{\zeta})=\mathbf{P}(\tilde{\zeta}^\prime)$, then there exists a group element $g\in\G$ such that $g\cdot\tilde{\zeta}=\tilde{\zeta}^\prime$. But ${\tilde{\mathbf{P}}(g\cdot\tilde{\zeta})=\tilde{\mathbf{P}}(\tilde{\zeta})}$ because each forward or backwards pointing arrow comprising $g\cdot\tilde{\zeta}$ and $\tilde{\zeta}$ are in the same orbit class.\label{claim:1} 
    \item \emph{Claim:} If $\zeta$ and $\xi$ are representatives of the same orbit class, then ${\E([\zeta])=\E([\xi])}$. \\
    \emph{Proof:} We can either make the same choice of lifted morphism in $\mathbf{Flo}_{\tilde{\Sigma}}[X]$, or note that by the argument in the proof of Claim~\eqref{claim:1}, two different choices in the same orbit map to the same morphism under $\tilde{\mathbf{P}}$.
    
    \item \emph{Claim:} The functor $\E$ is independent of choice of $\tilde{\Sigma}$-equivalence class of $\tilde{\zeta}$. \\
    \emph{Proof:} Suppose first that $\tilde{\zeta}$ and $\tilde{\zeta}^\prime$ are horizontally equivalent. By definition, they differ by an identity morphisms, which maps to an identity morphism under $\tilde{\mathbf{P}}$. Hence, $\tilde{\mathbf{P}}(\tilde{\zeta})$ is horizontally equivalent to $\tilde{\mathbf{P}}(\tilde{\zeta}^\prime)$ in $\flo{Y}$.

    Suppose instead that $\tilde{\zeta}$ and $\tilde{\zeta}^\prime$ are vertically equivalent, i.e.~they fit into a ladder diagram, which is mapped by $\tilde{\mathbf{P}}$ to a diagram of the same form because $\tilde{\mathbf{P}}$ is an LP-functor (Lemma~\ref{lemma:ptilde}). So $\tilde{\mathbf{P}}(\tilde{\zeta})$ and $\tilde{\mathbf{P}}(\tilde{\zeta}^\prime)$ are vertically equivalent in $\flo{Y}$. 
    
    \item \emph{Claim:} The functor $\E$ is an isomorphism. \\
    \emph{Proof:} We prove this by showing that $\E$ is a bijection on the sets of objects, and $\E$ is an order-preserving bijection on each hom-set. The bijection on objects follows immediately since $\E$ is the identity on objects. $\E$ is injective because:
    \begin{align*}
    \E([\zeta])=\E([\zeta^\prime])&\implies \tilde{\mathbf{P}}(\zeta)=\tilde{\mathbf{P}}(\zeta) \\
    &\implies \text{$\tilde{\zeta}$ and $\tilde{\zeta}^\prime$ are in the same orbit class}\\ &\implies \tilde{\zeta}\sim\tilde{\zeta}^\prime\\ &\implies [\zeta]=[\zeta^\prime].
    \end{align*}
    Surjectivity follows from the fact that both $\mathbf{P}$ and $\tilde{\mathbf{P}}$ are surjective. Similarly, both functors are order-preserving, so $\E$ is also order-preserving.
    \end{enumerate}
    Hence, the categories $\flo{Y}$ and $\textbf{Flo}_{\tilde{\Sigma}}[X]/\G$ are isomorphic.
    \end{proof}
    
Proposition~\ref{prop:catiso} tells us that finding an acyclic partial matching on the quotient and passing to the discrete flow category is the same (up to isomorphism) as finding an equivariant acyclic partial matching in the original space and taking the quotient of this larger flow category.

This gives us the tools to relate the Morse complex of groups over $\flo{X}$ to a complex of groups over the quotient $\mathbf{Flo}_{\tilde{\Sigma}}[X]/\G$. In particular, we will use Lemma~\ref{lem:cogcommute} to find a complex of groups over $\mathbf{Flo}_{\tilde{\Sigma}}[X]/\G$ associated to the $\G$-action on $\mathbf{Flo}_{\tilde{\Sigma}}[X]$ so that its development is isomorphic to that of $\M{}$.

Let $\lambda$ denote the choice of lift function made to construct $\F{}$.

\begin{prop}\label{prop:liftscommute}
There is a choice of lifts and transfers for the canonical complex of groups $\mathbf{M}^\prime$ and monomorphism $\Psi'$ associated to the action on $\mathbf{Flo}_{\tilde{\Sigma}}[X]$, such that the following diagram commutes:

\[
\begin{tikzcd}
{\mathbf{Flo}_{\tilde{\Sigma}}[X]/\G} \arrow[rr, "\E"] \arrow[rdd, "\mathbf{M'}"'] &      & \flo{Y} \arrow[ldd, "\mathbf{M}"] \\
                                                                                         & {}   &                                  \\
                                                                                         & \grp &                                 
\end{tikzcd}
\]
and $\Psi'_\zeta=\Psi_{\E\zeta}$ for every morphism $\zeta\in\mathbf{Flo}_{\tilde{\Sigma}}[X]/\G$.
\end{prop}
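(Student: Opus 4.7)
The strategy is to induce lifts and transfers for the $\G$-action on $\mathbf{Flo}_{\tilde{\Sigma}}[X]$ directly from the original choices used to build $\F{}$ and $\Phi$, and to read off the canonical complex of groups $\M{}'$ and monomorphism $\Psi'$ via Definition~\ref{def:actcog}. The proposition will then follow by comparing the resulting data term-by-term with Definition~\ref{def:mcog}.

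For the lift function, observe that the object-sets of $\mathbf{Flo}_{\tilde{\Sigma}}[X]/\G$ and $\flo{Y}$ are identified by $\E$, so I take $\lambda'$ to be the restriction of the original $\lambda$ to critical objects. This immediately gives $\M{}'({y}) = \G_{\lambda(y)} = \F{y} = \M{y}$ and $\Psi'_y = \Phi_y = \Psi_y$. For each morphism $\zeta$ of $\mathbf{Flo}_{\tilde{\Sigma}}[X]/\G$, with $\E\zeta$ represented by a $\Sigma$-zigzag
\[
\E\zeta:\; w \xrightarrow{h_0} y_0 \xleftarrow{f_0} x_0 \xrightarrow{h_1} \cdots \xleftarrow{f_k} x_k \xrightarrow{h_{k+1}} z
\]
in $\flo{Y}$, I declare the transfer to be
\[
\sigma'(\zeta) \;:=\; \sigma(h_{k+1})\cdot\sigma(f_k)^{-1}\cdots\sigma(f_0)^{-1}\cdot\sigma(h_0),
\]
which is precisely the element $\sigma(\E\zeta)$ appearing in Definition~\ref{def:mcog}. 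A short induction on zigzag length shows that splicing the individual lifts of the $h_i$ and $f_j$, each translated by the preceding partial product of transfers, yields a $\tilde{\Sigma}$-zigzag in $\mathbf{Flo}_{\tilde{\Sigma}}[X]$ from $\lambda(w)$ to $\sigma'(\zeta)^{-1}\lambda(z)$ whose orbit class is $\zeta$; this is the required $\lambda'(\zeta)$.

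Under these choices, $\M{}'(\zeta)$ is conjugation by $\sigma'(\zeta)$ and therefore agrees with $\M{\E\zeta}$; the 2-morphism data $\sigma'(\zeta_1)\sigma'(\zeta_2)^{-1}$ and the twisting elements $\sigma'(\zeta_2)\sigma'(\zeta_1)\sigma'(\zeta_2\circ\zeta_1)^{-1}$ likewise match $\M{}\circ\E$ on the nose; and $\tau_{\Psi'}(\zeta) = \sigma'(\zeta) = \tau_{\Psi}(\E\zeta)$. These identifications supply both the commutativity of the diagram and the equality $\Psi'_\zeta = \Psi_{\E\zeta}$.

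The main obstacle will be verifying well-definedness of $\sigma'$. Independence of the chosen orbit representative follows from Claim~(1) in the proof of Proposition~\ref{prop:catiso}: the $\G$-action multiplies every morphism of a zigzag uniformly, which shifts the basepoint $\lambda(w)$ but leaves the product of transfers intact. Independence of the chosen equivalence-class representative is more delicate and is the element-level analogue of the argument for $\M{\zeta}$ in Proposition~\ref{prop:morsecogwd}; it depends on the $\F{}$-compatibility of $\Sigma$ to force each square of a ladder diagram to commute at the level of transfer products, and not merely of induced conjugation homomorphisms. Once both well-definedness statements are in hand, Lemma~\ref{lem:cogcommute} applies and furnishes the last step in the proof of Theorem~\ref{thm:homequiv}.
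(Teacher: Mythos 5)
Your proposal is correct and follows essentially the same route as the paper: the same lift function (restricting $\lambda$ along the object identification given by $\E$), the same transfer elements $\sigma(\E\zeta)$, and the same splicing of constituent lifts translated by partial products of transfers (the paper makes this explicit via elements $\alpha_i,\beta_i$), after which commutativity and $\Psi'_\zeta=\Psi_{\E\zeta}$ are read off exactly as you do. Your remark that independence of the equivalence-class representative must hold at the level of transfer products (not just induced conjugations) is a fair point of care; the paper simply invokes Proposition~\ref{prop:morsecogwd} here, so your treatment is if anything slightly more scrupulous on that step.
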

\begin{proof}
We define a lift function $\mu$ for $\M{}'$: for any object $[x]$, set $\mu([x])\coloneqq \lambda(\E([x]))$. For any morphism $\zeta$ with representative 
\[
    \begin{tikzcd}
        w \arrow[r, "h_0"] & y_0 & x_0 \arrow[l, "f_0"'] \arrow[r, "h_1"] & y_1 & \cdots \arrow[l, "f_1"'] & x_k \arrow[l, "f_k"'] \arrow[r, "h_{k+1}"] & z,
    \end{tikzcd}
    \]
choose the transfer element $\sigma\coloneqq \sigma(\E\zeta)$, which is (by definition of the Morse complex of groups), $$\sigma(\E\zeta) = \sigma(h_{k+1})\sigma(f_k)^{-1}\cdots \sigma(h_1)\sigma(f_0)^{-1}\sigma(h_0).$$ We need to check that $\sigma$ is a valid choice of transfer element, i.e., that the morphism ${\lambda(\zeta):\lambda(w)\to\sigma^{-1}\lambda(z)}$ is the unique $\tilde{\Sigma}$-zigzag with source $\lambda(w)$ such that $\mathbf{P}(\lambda(\zeta))=\zeta$.

To simplify notation, write 
\begin{align*}
    \tilde{u} &\coloneqq \lambda(u),\ \text{for any object $u$}\\
    \alpha_i &\coloneqq \sigma(h_0)^{-1}\sigma(f_0)\sigma(h_1)^{-1}\cdots \sigma(h_i)^{-1},\ \text{and} \\
    \beta_i &\coloneqq \sigma(h_0)^{-1}\sigma(f_0)\sigma(h_1)^{-1}\cdots \sigma(h_i)^{-1}\sigma(f_{i}).
\end{align*}
Note that $\beta_{k+1} = \sigma^{-1}$. Each $\beta_i$ is such that $\beta_i\tilde{h}_i:\beta_i\tilde{x}_{i-1}\to\alpha_{i+1}\tilde{y}_{i+1}$ is the unique morphism with source $\beta_i\tilde{x}_{i-1}$ projecting to $h_i$ under $\p$. Similarly, each $\alpha_i$ is such that $\alpha_i\tilde{f}_i^{-1}:\alpha_i\tilde{x}_{i-1}\to\beta_i\tilde{y}_i$ is the unique morphism in the orbit of $\tilde{f}_i^{-1}$ with source $\alpha_i\tilde{x}_{i-1}$. This follows from the definition of each transfer element $\sigma(h_i)$ and $\sigma(f_i)$.

Hence, $\tilde{\zeta}$ can be written explicitly as the composition:
\[
\begin{tikzcd}
\tilde{w} \arrow[r, "\tilde{h}_0"] & \alpha_0\tilde{y}_0 \arrow[r, "\alpha_0\tilde{f}_0^{-1}"] & \beta_0\tilde{x}_0 \arrow[r, "\beta_0\tilde{h}_1"] & \alpha_1\tilde{y}_1 \arrow[r, "\alpha_1\tilde{f}_1^{-1}"] & \cdots  \arrow[r, "\alpha_k\tilde{f}_k^{-1}"] & \beta_k\tilde{x}_k \arrow[r, "\beta_k\tilde{h}_k"] & \alpha_{k+1}\tilde{z},
\end{tikzcd}
\]

which is a morphism such that $\mathbf{P}(\tilde{\zeta})=\zeta$, with source $\lambda(w)$ and target $\sigma^{-1}\lambda(z)$, as required. Its uniqueness follows from the uniqueness of each constituent morphism in the composition. By Proposition~\ref{prop:morsecogwd}, this is well-defined on equivalence classes. 

The commutativity of the diagram then follows immediately from this construction, because the 1-morphisms, 2-morphisms and twisting elements for $\mathbf{M}'$ and $\M{}$ are constructed in the same way from the transfer elements, and the maps on objects are the same.

To see that $\Phi_\zeta = \Phi_{\E\zeta}$ for any morphism $\zeta$, note again that it is defined as conjugation by the transfer element corresponding to $\zeta$. But this was chosen precisely so that $\sigma(\zeta)=\sigma(\E\zeta)$. 
\end{proof}

\begin{rem}
    Note that by definition of the induced action on $\mathbf{Flo}_{\tilde{\Sigma}}[X]$, the localisation and inclusion functors $\mathbf{L}_{\tilde{\Sigma}}$ and $\mathbf{J}_{\tilde{\Sigma}}$ are both $\G$-equivariant.
\end{rem}

\begin{proof}[Proof of Theorem~\ref{thm:homequiv}]
Given $\F{}$ and $\Phi$, we can construct the Morse complex of groups $\M{}$ and associated monomorphism $\Psi$, as described in Section~\ref{sec:morsecog}. Since $\Sigma$ is $\F{}$-compatible, we can build an equivariant acyclic partial matching $\tilde{\Sigma}$ on $X$ (Lemma~\ref{lem:sigmatilde}). Furthermore, the $\G$-action on $X$ pushes to an action on $\mathbf{Flo}_{\tilde{\Sigma}}[X]$  (Lemma~\ref{lem:floaction}), and the quotient $\mathbf{Flo}_{\tilde{\Sigma}}[X]/\G$ is isomorphic to $\flo{Y}$ (Proposition~\ref{prop:catiso}). There is a choice of lifts and transfers to build the canonical complex of groups $\mathbf{M}^\prime$ and morphism $\Psi^\prime$ associated to the action on $\mathbf{Flo}_{\tilde{\Sigma}}[X]$ making the diagram
\[
\begin{tikzcd}
{\mathbf{Flo}_{\tilde{\Sigma}}[X]/\G} \arrow[rr, "\E"] \arrow[rdd, "\mathbf{\tilde{M}}"'] &      & \flo{Y} \arrow[ldd, "\mathbf{M}"] \\
                                                                                         & {}   &                                  \\
                                                                                         & \grp &                                 
\end{tikzcd}
\]
commute (Proposition~\ref{prop:liftscommute}). Hence, ${\D(\M{},\Psi)\cong\D(\M{}^\prime,\Psi^\prime)}$ as categories (Lemma~\ref{lem:cogcommute}). By the Basic Construction (Proposition~\ref{prop:bascon}), $\D(\M{}^\prime,\Psi^\prime)$ is $\G$-equivariantly isomorphic to $\mathbf{Flo}_{\tilde{\Sigma}}[X]$, and the latter has classifying space homotopy-equivalent to $X$ (Theorem~\ref{thm:flocat}). Since the localisation and inclusion functors, $\mathbf{L}_{\tilde{\Sigma}}$ and $\mathbf{J}_{\tilde{\Sigma}}$, are $\G$-equivariant with respect to the induced $\G$-action on $\mathbf{Loc}_{\tilde{\Sigma}}[X]$ and $\mathbf{Flo}_{\tilde{\Sigma}}[X]$, the classifying space $|\Delta\D(\M{},\Psi)|$ of the development of the Morse complex of groups and its associated monomorphism is $\G$-equivariantly homotopy-equivalent to $X$, as required.
\end{proof}

	\section{An Example}\label{sec:example}
	
	We present a detailed example to demonstrate the entire process, from a regular action on a simplicial complex to the higher development of the associated discrete Morse complex about an acyclic partial matching on the orbit space. \\

Let $X$ be the triangulation of an annulus, as show in Figure \ref{fig:action}, equipped with the (regular) left action of $D_8=\langle r,s\ |\ r^4=s^2=e,r^3s=sr\rangle$, where $r$ is the anticlockwise rotation by $\pi/2$ and $s$ is a reflection in the $y$-axis. The orbit space $Y$ in this case is a fundamental domain, \emph{i.e.}~a subcomplex of $X$; it is a simplicial complex with four 0-simplices $\{v_0,...,v_3\}$, five 1-simplices $\{u_0,...,u_4\}$ and two 2-simplices $\{w_0,w_1\}$. \\

\begin{figure}[ht]
    \centering
    \fontsize{9pt}{9pt}
    \def\svgwidth{\columnwidth}
    \scalebox{1}{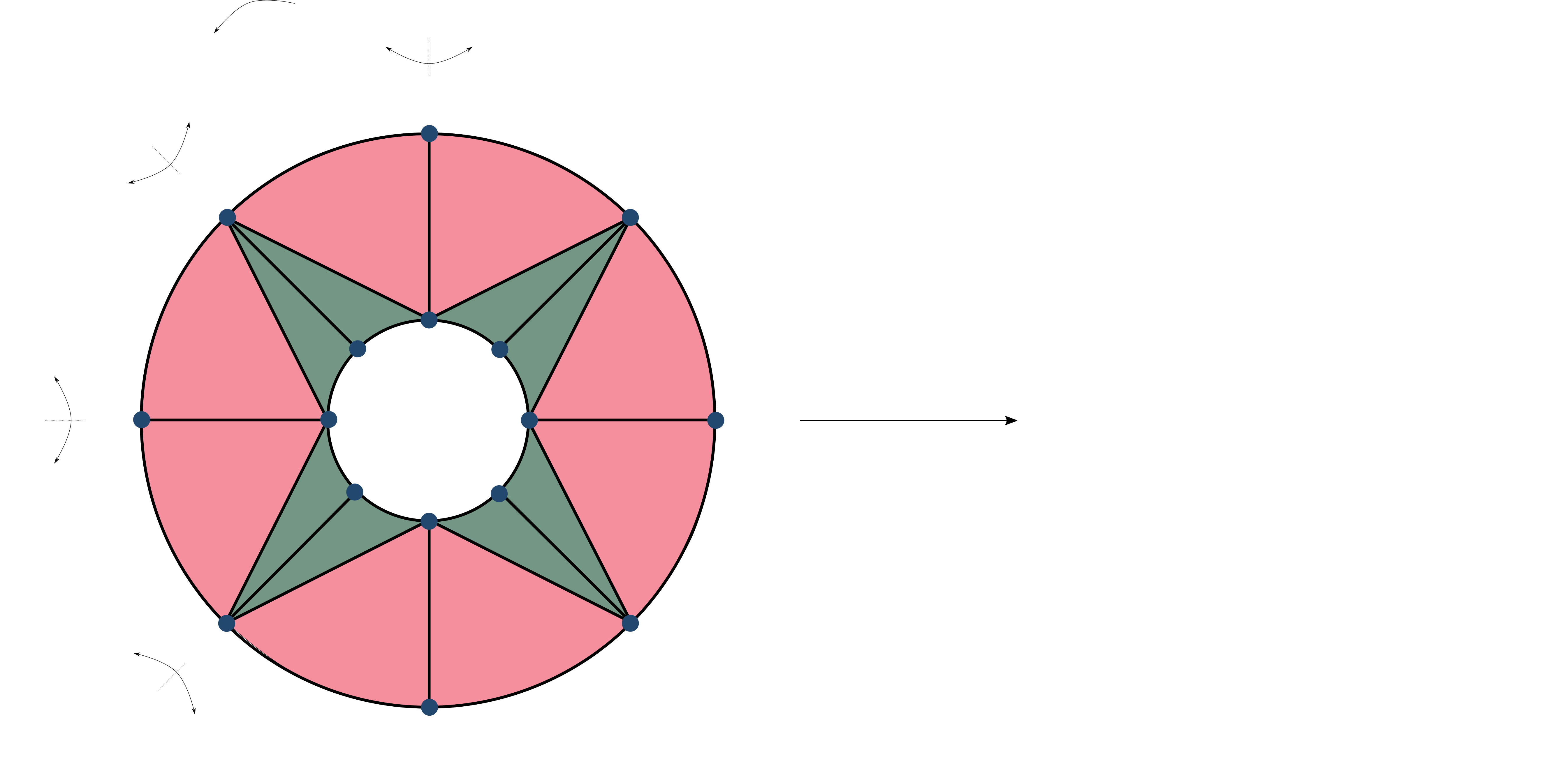}
    \caption{On the left, a triangulation $X$ of the annulus equipped with a regular simplicial (left) group action of $D_8={\langle r,s\ |\ r^4=s^2=e,r^3s=sr\rangle}$, where $r$ is the anticlockwise rotation by $\pi/2$ and $s$ is the reflection in the $y$-axis. The two orbits of the $2$-simplices are coloured in pink and green, respectively. On the right, the orbit space $Y$, a simplicial complex with four 0-simplices $\{v_0,...,v_3\}$, five 1-simplices $\{u_0,...,u_4\}$ and two 2-simplices $\{w_0,w_1\}$.}
    \label{fig:action}
\end{figure}

To construct the complex of groups associated to this action, as well as the morphism of complexes of groups, we make a choice of lifts and transfers; for each $y\in Y$ we choose a simplex $\overline{y}\in X$ such that $p(\overline{y})=y$, and these choices are highlighted on the left of Figure~\ref{fig:cog}. For each pair $x\to y$ of simplices related by a face relation in $Y$ (\emph{i.e.~}for each pair of simplices related by a morphism in $\fac{Y}$) we choose a group element $g\in\G$ such that $g^{-1}\cdot\overline{y}$ is a face of $\overline{x}$ in $X$. For example, $\overline{w_1}$ and $\overline{u_4}$ are adjacent in $X$ so their transfer is trivial; however, $r^2s\cdot\overline{u_1}$ is a face of $\overline{w_0}$ (whereas $\overline{u_1}$ is not), so in the complex of groups, the morphism between the stabilisers $\F{}_{w_0} \to \F{}_{u_1}$ is conjugation by $(r^2s)^{-1}=r^2s$. After making all of these choices, we obtain a complex of groups over $Y$, as illustrated in Figure~\ref{fig:cog}. The morphism $\Phi:\F{}\to\cst$ follows immediately from the choice of $\overline{y}$ for each $y\in Y$ and transfers.\\

\begin{figure}[ht]
    \centering
    \fontsize{8pt}{8pt}
    \def\svgwidth{\columnwidth}
    \scalebox{1}{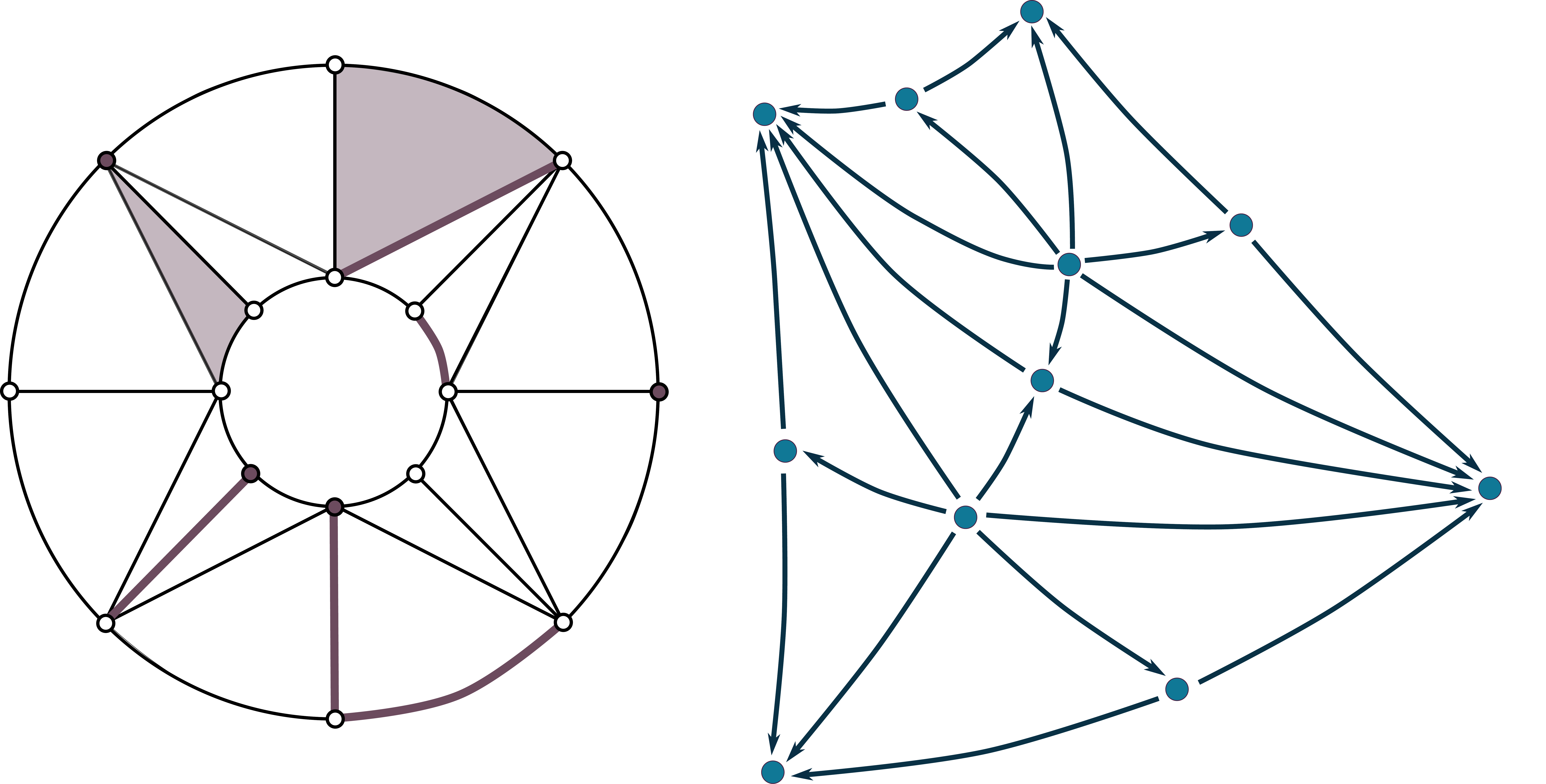}
    \caption{\textbf{The complex of groups associated to a group action.} On the left, a choice of lifts for each simplex in the orbit space $Y$. On the right, the complex of groups associated to this group action and choice of lifts and transfers. The transfers determine the 1-morphisms, which are decorated on the right by the group element by which the morphism conjugate. Only non-trivial 2-morphisms are labelled.}
    \label{fig:cog}
\end{figure}

Equipped with a complex of groups and a morphism into $\cst$, we have enough data to use Proposition~\ref{prop:bascon} to build a simplicial complex that is $\G$-equivariantly isomorphic to $X$. However, we are looking to reduce the size of $Y$ by finding an acyclic partial matching on $Y$ before building a complex of groups to reconstruct. We choose the smallest non-trivial matching consisting of a single pairing $\Sigma=\{f:u_4\rightarrow w_1 \}$. This leads to the discrete flow category displayed on the right hand side of Figure \ref{fig:flowcat}. The lifts of $u_4$ and $w_1$ are adjacent in $X$, yielding a compatible matching. On the right, the discrete flow category $\flo{Y}$; there are no objects corresponding to cells $u_4$ or $w_1$ as these are not critical. Each morphism is an equivalence class of zigzags under the relations given in Definition~\ref{def:catloc}, with representative zigzags decorating the morphisms above. There is a (non-trivial) poset of morphisms for each of the pairs of objects $(w_0,v_2)$ and $(w_0,v_0)$. \\

\begin{figure}[ht!]
    \centering
    \fontsize{8pt}{8pt}
    \def\svgwidth{\columnwidth}
    \scalebox{1}{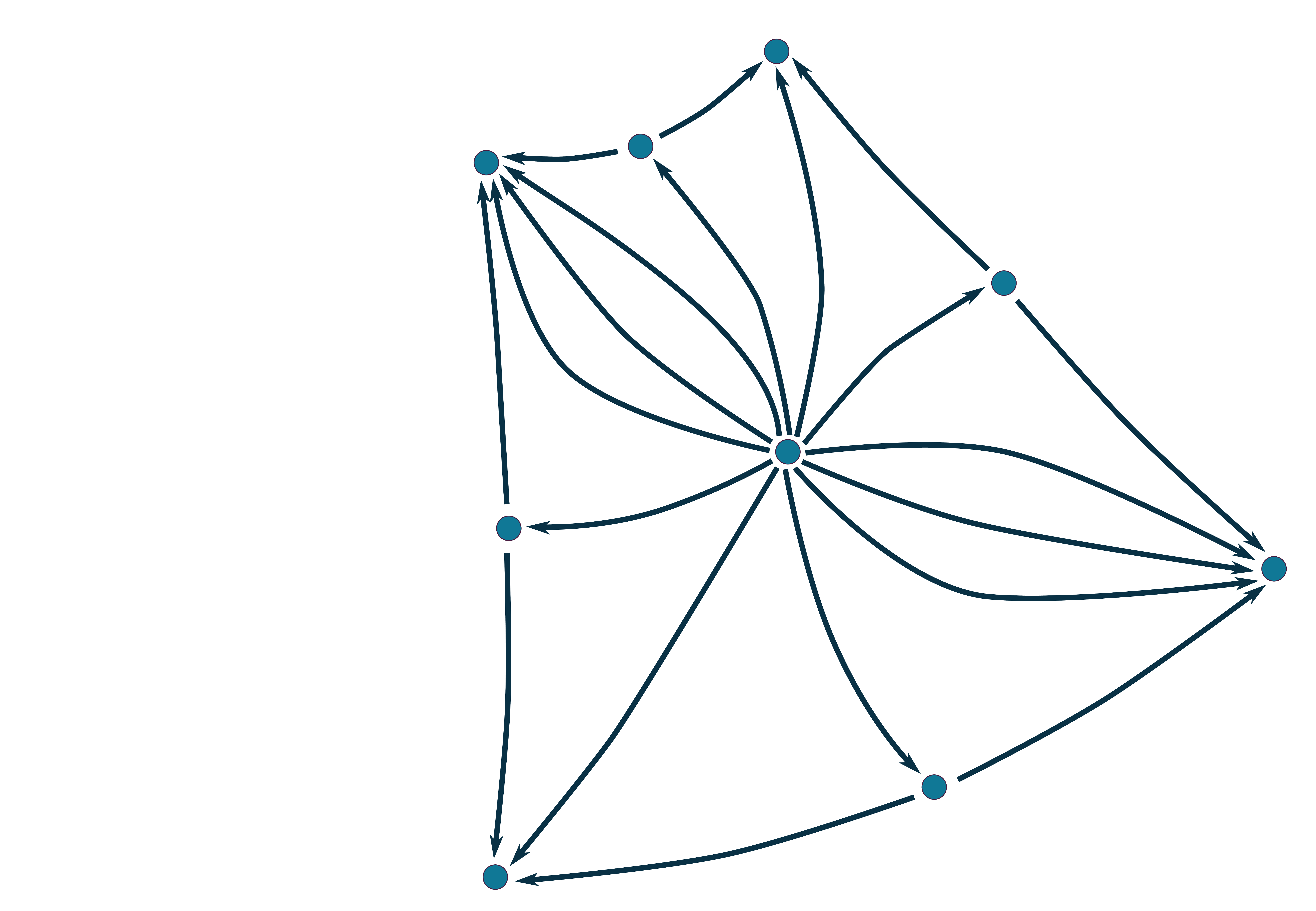}
    \caption{\textbf{The discrete flow category.}}
    \label{fig:flowcat}
\end{figure}

The discrete flow category is the domain of our Morse complex of groups $\M{}$, which is constructed in full at the top of Figure~\ref{fig:morsecog}. At the bottom, we see part of its higher development, whose classifying space is $\G$-equivariantly homotopy equivalent to $X$.

\begin{figure}[ht!]
    \centering
    \fontsize{8pt}{8pt}
    \def\svgwidth{\columnwidth}
    \scalebox{1.1}{}
    \caption{\textbf{The discrete Morse complex and its higher development.} The discrete Morse complex (top) is a complex of groups over the discrete flow category $\flo{Y}$. All 1-morphisms are conjugation by the group element decorating the arrow. All non-trivial 2-morphisms are labelled. Part of its higher development (bottom). The classifying space of the higher development is $\G$-equivariantly homotopy equivalent to $X$, i.e., it has the homotopy-type of $S^1$.}
    \label{fig:morsecog}
\end{figure}

	\bibliographystyle{abbrv}

\begin{thebibliography}{10}

\bibitem{atiyahbott}
M.~F. Atiyah and R.~Bott.
\newblock The {Yang-Mills} equations over {R}iemann surfaces.
\newblock {\em Phil. Trans. R. Soc. Lond. A}, 308(523--615):523--615, 1982.

\bibitem{bredon1972compact}
G.~E. Bredon.
\newblock {\em Introduction to Compact Transformation Groups}.
\newblock Number~46 in Pure and Applied Mathematics. Academic Press, 1972.

\bibitem{bridson2011metric}
M.~Bridson and A.~H{\"a}fliger.
\newblock {\em Metric Spaces of Non-Positive Curvature}.
\newblock Grundlehren der mathematischen Wissenschaften. Springer Berlin
  Heidelberg, 2011.

\bibitem{carbone2020equivariant}
L.~Carbone, V.~Nanda, and Y.~Naqvi.
\newblock Equivariant simplicial reconstruction.
\newblock {\em SIAM Journal on Applied Algebra and Geometry}, 4(4):532--552,
  2020.

\bibitem{chari00discrete}
M.~K. Chari.
\newblock On discrete {M}orse functions and combinatorial decompositions.
\newblock {\em Discrete Mathematics}, 217(1-3):101--113, 2000.

\bibitem{chohong}
C.-H. Cho and H.~Hong.
\newblock Orbifold morse–smale–witten complexes.
\newblock {\em International Journal of Mathematics}, 25, 2014.

\bibitem{cjs}
R.~Cohen, J.~Jones, and G.~Segal.
\newblock Morse theory and classifying spaces.
\newblock {\em Warwick University Preprint, {\em
  \url{http://math.stanford.edu/\textasciitilde ralph/morse.ps}}}, 1995.

\bibitem{corson}
J.~M. Corson.
\newblock Complexes of groups.
\newblock {\em Proceedings of the London Mathematical Society}, 3(65):199--224,
  1992.

\bibitem{forman1998morse}
R.~Forman.
\newblock Morse theory for cell complexes.
\newblock {\em Advances in Mathematics}, 134:90--145, 1998.

\bibitem{freij}
R.~Freij.
\newblock Equivariant discrete morse theory.
\newblock {\em Discrete Mathematics}, 309(12):3821--3829, 2009.

\bibitem{haefliger}
A.~Haefliger.
\newblock Extensions of complexes of groups.
\newblock {\em Annales de l'institut Fourier}, 42(1-2):275--311, 1992.

\bibitem{hepworth1}
R.~Hepworth.
\newblock {M}orse inequalities for orbifold cohomology.
\newblock {\em Algebraic \& Geometric Topology}, 9:1105--1175, 2009.

\bibitem{hepworth2}
R.~Hepworth.
\newblock Vector fields and flows on differentiable stacks.
\newblock {\em Theory and Applications of Categories}, 22:542--587, 2009.

\bibitem{hingston}
N.~Hingston.
\newblock Equivariant {M}orse theory and closed geodesics.
\newblock {\em J. Differential Geom.}, 19(1):85--116, 1984.

\bibitem{kirwan}
F.~C. Kirwan.
\newblock {\em Cohomology of Quotients in Symplectic and Algebraic Geometry},
  volume~31 of {\em Mathematical Notes}.
\newblock Princeton University Press, 1984.

\bibitem{lim:thomas:08}
S.~Lim and A.~Thomas.
\newblock Covering theory for complexes of groups.
\newblock {\em Journal of Pure and Applied Algebra}, 212(7):1632--1663, 2008.

\bibitem{NANDA2019459}
V.~Nanda.
\newblock Discrete {M}orse theory and localization.
\newblock {\em Journal of Pure and Applied Algebra}, 223(2):459--488, 2019.

\end{thebibliography}

\end{document}